\documentclass[12pt]{amsart}
\usepackage{amsmath, amsthm, amssymb}
\usepackage{amsfonts}
\usepackage{fullpage}
\usepackage{color}
\usepackage{hyperref}
\usepackage{enumitem}
\usepackage{bm}
\usepackage{verbatim}


\usepackage[dvipsnames]{xcolor}
\usepackage{graphicx}  
\DeclareGraphicsExtensions{.pstex,.eps}

\usepackage{xcolor}

\renewcommand{\ell}{{l}}  






\newcommand{\R}{{\mathbb{R}}}
\newcommand{\Z}{{\mathbb Z}}
\newcommand{\C}{{\mathbb C}}

\renewcommand{\P}{{\mathbb P}}
\renewcommand{\S}{{\mathbb S}}

\newcommand{\N}{{\mathbb N}}

\newcommand{\EE}{{\mathcal E}}

\newcommand{\NN}{{\mathcal N}}
\newcommand{\FF}{{\mathcal F}}

\newcommand{\GG}{{\mathcal G}}

\renewcommand{\Re}{\mathop{\rm Re}\nolimits}
\renewcommand{\Im}{\mathop{\rm Im}\nolimits}

\renewcommand{\div}{\operatorname{div}}

\theoremstyle{plain}

\newtheorem{thm}{Theorem}[section]
\newtheorem{prop}[thm]{Proposition}
\newtheorem{cor}[thm]{Corollary}
\newtheorem{lemma}[thm]{Lemma}

\theoremstyle{definition}

\newtheorem{rem}{Remark}[thm]

\numberwithin{equation}{section}

\def\squarebox#1{\hbox to #1{\hfill\vbox to #1{\vfill}}}


\newcommand{\<}{\langle}
\renewcommand{\>}{\rangle}

\renewcommand{\d}{\partial}
\newcommand{\ep}{\epsilon}
\newcommand{\lV}{\lVert}
\newcommand{\rV}{\rVert}

\def\Ga{\Gamma}
\def\de{\delta}
\def\De{\Delta}
\def\ep{\epsilon}

\def\la{\lambda}
\def\La{\Lambda}

\def\Om{\Omega}

\def\nab{\nabla}

\def\al{\alpha}
\def\les{\lesssim}
\def\c{\cdot}

\title[Incompressible Schr\"odinger flow]
{Asymptotic behaviors of incompressible Schr\"{o}dinger flow for small data in three dimensions}

\author[J. Huang]
{Jiaxi Huang}

\author[L. Zhao]
{Lifeng Zhao}

\address{School of Mathematics and Statistics, Beijing Institute of Technology, Beijing 100081, P.R. China}
\email{jiaxih@bit.edu.cn}

\address{School of Mathematical Sciences, University of Science and Technology of China, Hefei, Anhui,
230026, PR China}
\email{zhaolf@ustc.edu.cn}

\subjclass[2010]{Primary: 35Q55, 35P25; Secondary: 35B65.}

\keywords{Schr\"odinger equation, asymptotic behaviors, global regularity, small data}



\begin{document}

\begin{abstract}
The incompressible Schr\"odinger flow is a Madelung’s
hydrodynamical form of quantum mechanics, which can simulate classical fluids with particular advantage in its simplicity and its ability of capturing thin vortex dynamics. This model enables robust simulation of intricate phenomena such as vortical wakes and interacting vortex filaments.

In this article, we prove the global regularity and asymptotic behaviors for incompressible Schr\"odinger flow with small and localized data in three dimensions. We choose a suitable gauge to rewrite the system, and then use Fourier analysis and vector fields method to prove global existence and asymptotic behaviors.
\end{abstract}

\date{\today}
\maketitle


\setcounter{tocdepth}{1}

\section{Introduction}
In this paper, we consider the incompressible Sch\"{o}dinger flow initial-value problem
\begin{equation}           \label{Sys-Ori}
\left\{\begin{aligned}
&\d_t u+u\cdot\nab u+\nab P=\Delta u-\div(\nab\phi\odot\nab\phi),\\
&{\rm div}\  u=0,\\
&\d_t \phi+u\cdot\nab\phi=\phi\times\Delta\phi,\\
&(u,\phi)\big|_{t=0}=(u_0,\phi_0).
\end{aligned}\right.
\end{equation}
Here $d=2,3$ is the spatial dimensions, $u:[0,T]\times \R^d \rightarrow\R^d$ represents the velocity field of the flow, $P$ is the pressure function, and $\phi:[0,T]\times \R^d\rightarrow \S^2\subset\R^3$ denotes the magnetization field. The notation $\times$ is the cross product for vectors in $\R^3$, and the term $\nab\phi\odot\nab\phi$ denotes the $d\times d$ matrix whose $(i,j)$-th entry is given by $\d_i \phi\cdot\d_j \phi$ $(1\leq i,j\leq d)$, i.e.
\[
(\nabla \phi\odot\nabla \phi)_{ij}=  \sum_{k=1}^d  \partial_i \phi_k \partial_j \phi_k,
\]
and 
\[
(\div (\nabla \phi\odot\nabla \phi))_j=  \sum_{i=1}^d\sum_{k=1}^d  \partial_i(\partial_i \phi_k \partial_j \phi_k)=\sum_{k=1}^d \Delta \phi_k \partial_j\phi_k +\frac12 \partial_j |\nabla \phi_k|^2.
\]
The model \eqref{Sys-Ori} is a coupled system of the incompressible Navier-Stokes equations and Schr\"{o}dinger map flow, which can be used to describe the dispersive theory of magnetization of ferromagnets with quantum effects.

The incompressible Schr\"odinger flow \eqref{Sys-Ori} arises in the simulation of classical fluids in order to capture the coherent vortical structures and their dynamics. In \cite{Chern,Ch17,Ch16} they formulated and simulated classical fluids using a $\C^2$-valued Schrödinger equation subject to an incompressibility constraint. 
Such a fluid flow was called incompressible Schrödinger equation, which is used to simulated classical fluids with particular advantage in its simplicity and its ability of capturing thin vortex dynamics. 
Then they introduce the velocity Clebsch variable and vorticity Clebsch variable to derived the incompressible Schr\"odinger map flow \eqref{Sys-Ori}, which reveals a deep relation between Clebsch variables in hydrodynamics and spins in quantum mechanics.

The other motivation is that the incompressible Schr\"odinger flow (\ref{Sys-Ori}) can be seen as a special case of magnetoelasticity (see \cite{Br}). The model of magnetoelasticity describes a class of phenomena on the interaction between elastic and magnetic effects, whose discovery dates back at least to the 19th century. Brown\cite{Br} established the first rigorous phenomenological theory of magnetoelasticity. Tiersten also presented an essentially equivalent theory for magnetoelastic solids in two papers \cite{T64,T65}. Both these works consider magnetically saturated media undergoing large deformations, phrased in the Eulerian coordinate system. However, it lacks dissipation mechanisms and does not include the theory of micromagnetics.
Hence, Forster \cite{Fo16} further derived models for magnetoelastic materials by utilizing variational principles in a continuum mechanical setting. Recently, Jiang-Liu-Luo \cite{JLL23} proved the local-in-time existence of the evolutionary model
for magnetoelasticity with finite initial energy by employing the nonlinear iterative approach
to deal with the constraint on values of the magnetization $|\phi|=1$. After reformulating the evolutionary model near a constant
equilibrium, they also proved the global well-posedness to the evolutionary model for magnetoelasticity with zero external magnetic field under small size of initial data.
In the current paper, the incompressible Schr\"odinger flow \eqref{Sys-Ori} is seen as a model for magnetoelasticity without elasticity. The model exhibits the phenomenon of the interaction between fluid and magnetic effects.

The model \eqref{Sys-Ori} is reduced to the well-known Schr\"{o}dinger (or Landau-Lifshitz) flow of maps from $\R^d$ into $\S^2$ if we set $u\equiv 0$, which is an important model known as the ferromagnetic chain system. The local
well-posedness theory of Schrödinger map flows was established by Sulem-Sulem-Bardos \cite{SSB}, Ding-Wang \cite{DW} and McGahagan \cite{M}. The global well-posedness theory was started by Chang-Shatah-Uhlenbeck \cite{CSU} and Nahmod-Stefanov-Uhlenbeck \cite{NSU}. And the $d=1$ case with general targets was studied by Rodnianski-Rubinstein-Staffilani \cite{RRS}. 
For $\S^2$ target, the first global well-posedness result for Schr\"odinger flows in critical Besov spaces in $d\geq 3$ was proved by Ionescu-Kenig
\cite{IK} and Bejenaru \cite{B} independently. This was later improved to global regularity for small data in the critical Sobolev spaces in dimensions $d\geq 4$ in \cite{BeIoKe} and in dimensions $d\geq 2$ in \cite{BIKT}.
However, the question of small data global well-posedness in critical Sobolev spaces for general compact K\"ahler targets was more complicated, which was raised by Tataru in the survey report \cite{KTV}. Recently, Li \cite{Li0,Li} solved this problem using a novel bootstrap-iteration scheme to reduce the gauged equation to an approximate constant curvature system in finite times of iteration. We can refer to \cite{KTV} for a more detailed review.

The Schr\"odinger flow in \eqref{Sys-Ori} is essentially a Schr\"odinger equation after reformulating, in which the dispersion methods play a crucial role. We can refer to Tao's book in \cite{Tbook} for the classical and effective tools. As an application of these methods, here we list some works that motivated us to study the incompressible Schr\"odinger flow. Miao-Murphy-Zheng in \cite{MMZ14} considered a class of defocusing energy-supercritical nonlinear Schrödinger equations in four space dimensions and obtain the global well-posedness and scattering. Miao-Wu-Xu in \cite{MWX11} proved the global well-posedness for the cubic nonlinear Schr\"odinger equation with derivative in $H^{1/2}(\R)$, where the I-method combined with a new resonant decomposition technique was used.

Different from the above standard Schr\"odinger map equation, the presence of material derivative $\d_t+u\cdot\nab$ in \eqref{Sys-Ori} makes the $\phi$-equation become a quadratic Schr\"odinger equation. There are many works devoted to small data global regularity for quadratic Schr\"odinger equations of the form
\begin{equation*}  
    i\d_t u+\De u=N(u,\bar u,\nab u,\nab \bar u),\quad (t,x)\in \R\times \R^d.
\end{equation*}
To shorten the discussion, we focus here on recent developments corresponding to nonlinearities involving one $\nab u$ or $\nab\bar u$. On one hand, the nonlinearity with null structure makes estimates easier. On the other hand, the loss of derivatives makes estimates more complicated.
In dimension 3, Hayashi-Miao-Naumkin \cite{HaMiNa99} and Hayashi-Naumkin \cite{HaNa12} were able to prove global existence and scattering for small data and for
any quadratic nonlinearity involving at least one derivative. 
In dimension 2, Delort \cite{De} proved global existence for a nonlinearity of the form $u\nab u$ or $\bar u\nab\bar u$ by the vector fields method, a normal form transform and microlocal analysis. Finally, Germain-Masmoudi-Shatah \cite{GeMaSh} used the space-time resonance to prove global regularity and scattering for a nonlinearity $N=Q(u,u)+Q(\bar u,\bar u)$, where $Q$ is like a derivative for low frequencies, and the identity for high frequencies.

\subsection{The main theorem} Our objective in this paper is to establish the global in time regularity and scattering  of \eqref{Sys-Ori} with small and localized data in three dimensions by gauge theory and vector fields method, which extend the local solution of \eqref{Sys-Ori} proved by the first author \cite{Hu20}.

Here we start with two conservation laws for solutions of the system \eqref{Sys-Ori}. One of the conservation law is its energy defined by
\[ E(u,\phi)=\frac{1}{2}\|u\|_{L^2}^2+\int_0^t \|\nab u\|_{L^2}^2\ ds+\frac{1}{2}\|\nab\phi\|_{L^2}^2. \]
And \eqref{Sys-Ori} has the scaling invariance property: 
\[u(t, x) \rightarrow \la u(\la^2t, \la x),\quad\phi(t,x)\rightarrow \phi(\la^2 t,\la x).\]
Thus this scaling would suggest the critical Sobolev space for $(u,\phi)$ to be $\dot H^{\frac{d}{2}-1}\times \dot H^{\frac{d}{2}}$, and $d = 2$ is the energy critical case. 
Also the mass is conserved in the \eqref{Sys-Ori}
\begin{equation}  \label{mass-Conservation}
M(\phi)=\frac{1}{2} \| \phi-Q\|_{L^2}^2,\qquad \text{if}\ \|\phi_0-Q\|_{L^2}<\infty,\text{ for some}\ Q\in\S^2. \end{equation}

To state our main theorem precisely we need some notation. We define the scaling vector-field $S$ and the rotation vector fields $\Om_i$ by
\begin{align*}
    S=t\d_t+x\cdot\nab,\quad (\Om_1, \Om_2, \Om_3)=(x_2\d_3-x_3\d_2, x_3\d_1-x_1\d_3, x_1\d_2-x_2\d_1).
\end{align*}
Then we denote the set of vector-fields as
\begin{align*}
    \mathcal Z=\big\{\d_t, \d_1, \d_2, \d_3, \Om_1, \Om_2, \Om_3,  S\big\},
\end{align*}
and the time independent analogue of $\mathcal Z$ as
\begin{align*}      
	\mathcal Z_0= \big\{ \d_1, \d_2, \d_3, \Om_1, \Om_2, \Om_3, S_0  \big\},\quad \tilde S_0=\tilde S-2t\d_t.
\end{align*}
Given a point $Q\in \S^2$, we define the extrinsic Sobolev space $H^k_Q$ by 
\begin{equation*}
	H^k_Q:=\{ \phi:\R^d\rightarrow\R^3 : |\phi(x)|=1 \text{ and }\phi-Q\in H^k \}.
\end{equation*}

Then, our main result is as follows:
\begin{thm}[Small data global regularity and scattering]\label{Main_thm-ori}
Let $d=3$ and a point $Q\in\S^2$. Assume that the localized initial data $(u_0,\phi_{0})\in H^3\times H^4_Q$ satisfies
	\begin{equation}            \label{MainAss_ini}
		\|(u_0,\phi_0)\|_{H^3\times H^4}+\sum_{\Ga_0\in \mathcal Z_0}\lV  (\Ga_0 u_0,\nab \Ga_0\phi_0)\rV_{H^1}\leq \ep_0.
	\end{equation}
	Then the incompressible Schr\"odinger flow \eqref{Sys-Ori} admits a unique global solution $(u,\phi)\in H^3\times H^4_Q$ in three dimensions satisfying the energy bounds
	\begin{equation}\label{energy-bound}
		\|(u,\phi)\|_{H^3\times H^4_Q}+ \lV\nab u\rV_{L^2([0,t]:H^3)}+\sum_{\Ga\in \mathcal Z} \Big\{\lV (\Ga u,\nab \Ga\phi)\rV_{H^1}+ \lV\nab \Ga u\rV_{L^2([0,t]:H^1)}\Big\}\les \ep_0.
	\end{equation}
	for any $t\in[0,\infty)$. 
	Moreover, solution $(u,\phi)$ converges to the constant map $(0,Q)$ in the sense that
	\begin{equation} \label{thm-decay}
	    \lim_{t\rightarrow \infty}\big(\| u(t)\|_{L^\infty}+ \| \phi(t)-Q\|_{L^\infty}\big) =0.
	\end{equation}
	Furthermore, there exists $v_\infty, w_\infty$ and $\Psi_\infty\in H^1$ such that 
	\begin{align}  \label{thm-scatter}
	\lim_{t\rightarrow\infty}\|\d_j\phi-v_\infty \Re( e^{it\De}\Psi_{\infty,j})-w_\infty \Im( e^{it\De}\Psi_{\infty,j})\|_{H^1}=0.
	\end{align}
\end{thm}
\begin{rem}
	Since the map $\phi$ satisfies a quadratic Schr\"odinger equation, here we choose suitable vector fields to define the function spaces and Klainerman's generalized energy. The vector fields and function spaces are given in Section \ref{3.2} in detail.
\end{rem}

\subsection{Main ideas} \label{main-idea}
The main strategy to prove global regularity for \eqref{Sys-Ori} relies on an interplay between the control of high order energies and decay estimates, which is based on the Fourier analysis and vector-field method. The main ingredients include decay estimates, energy and weighted energy estimates, and $L^2$ weighted bounds on the profile $\Psi=e^{-it\De}\psi$ associated with differentiated fields $\psi$. However, there are still some difficulties to overcome.

\emph{1. The choice of vector fields.} In the prior works in \cite{GeMaSh,HaNa12} and so on for quadratic Schr\"odinger equations, the vector field $\mathcal G=x+2it\nab$ was applied to prove the small data global regularity, which commutes with the Schr\"odinger operator $i\d_t+\De$. 
However, the operator $\mathcal G$ does not commute with the heat operator $\d_t-\De$.
Furthermore, it works well only when the nonlinearities $N(u)$ is self-conjugate, namely, $N(e^{i\theta}u) = e^{i\theta}N(u)$ for all $\theta \in \R$. 
Thus these facts does not allow us to use the vector field $\GG$ to define the weighted energy functional. In view of the symmetries of Navier-Stokes equation and Schr\"odinger equation, as a system we would only choose the following suitable vector fields to define its weighted energy
\[ \{\d_t,\d_1,\d_2,\d_3,\tilde\Om_1,\tilde\Om_2,\tilde\Om_3,\tilde S\},  \]
where $\tilde \Om$, $\tilde S$ are perturbed angular momentum operators and perturbed scaling vector field, respectively.

\emph{2. Decay estimates of Schr\"odinger map.} However, the dispersive estimates could not connect the decay and weighted energy directly without the vector field $\GG=x+2it\nab$. Inspired by \cite{HaMiNa99} we can consider the operator $\GG\cdot\nab$ as a connection between decay estimates and weighted energy, and thus give the decay of differentiated field $\psi$. Precisely, the time decay estimates of the derivative of the solution $\psi$ are obtained by a priori estimate of the norm $\| \GG\nab\psi\|_{L^2}$. Then, we apply the inequality
\[ \| \GG\nab f\|_{L^2}\lesssim \| (x\cdot \nab+2it\De)f\|_{L^2}+\|\Om f\|_{L^2}, \]
which is valid for any smooth function $f$.
Note that the operator $x\cdot\nab +2it\De$ can be replaced by the scaling vector field $S=2t\d_t+x\cdot\nab$ via the identity $x\cdot\nab +2it\De=S+2it(i\d_t+\De)$.
Then it suffices to prove the decay estimates of the nonlinear term $(i\d_t+\De)\psi$ in $L^2$ for the solution $\psi$ of the nonlinear Schr\"odinger equation. This decay estimate is achieved by Fourier analysis and bootstrap assumptions, and hence the decay of the solution $\psi$ can be obtained.

\subsection{Notations}
The notation $A\lesssim B$ means there exists some universal constant $C>0$ such
that $A \leq  C B$. 
We denote $\mathcal R$ as the Riesz transformation $\mathcal R=\frac{\nab}{|\nab|}$. Let $\P$ be the Leray projection
\[\P=I_d+\nab(-\De)^{-1}\nab.\]

\subsection{Outline}
In section 2 we rewrite the $\phi$-equation for its differentiated fields. In section 3 we fix notation and state the main bootstrap proposition. We also state several lemmas. In section 4, we begin with the linear decay estimate, which is the key lemma in deriving the decay estimate of $\psi$. Then we use the bootstrap assumptions to derive various decay estimates of velocity $u$, connection coefficients $A$ and the nonlinearities in $\psi$-equation. In section 5, we use these decays and Fourier analysis to prove the energy and weighted energy estimates, and also give the scattering of $\nab\phi$ in Coulomb gauge. Finally, we use local existence and bootstrap Proposition \ref{Main_Prop} to prove Theorem \ref{Main_thm-ori}.

\bigskip 
\section{The differentiated equations}\label{sec2}
Instead of working directly on equation \eqref{Sys-Ori} for the map $\phi(t,x)$, it is convenient to study the equations satisfied by its derivatives $\d_m\phi$ for $m=1,\cdots,d+1$, where $\d_{d+1}=\d_t$. These are tangent vectors to the sphere at $\phi(t,x)$.

In this section we start with a smooth solution $\phi$ to the Schr\"odinger map equation and a smooth orthonormal frame $(v,w)$ in $T_\phi \S^2$. Then we construct the complex fields $\psi_m$ and the connection coefficients $A_m$, and derive the differentiated Schr\"odinger map equations satisfied by these functions under Coulomb gauge.

\begin{prop}[Coulomb gauge, \cite{BeIoKe}, Proposition 2.3]   \label{frame}
    Assume $T\in[0,1]$, $Q\in \S^2$, and 
    \begin{equation*}
        \left\{\begin{aligned}
        &\phi\in C([-T,T]:H^\infty_Q);\\
        &\d_t \phi\in  C([-T,T]:H^\infty).
        \end{aligned}\right.
    \end{equation*}
    Then there are continuous functions $v,\ w:\R^d\times [-T,T]\rightarrow\S^2$, $\phi\cdot v=0$, $w=\phi\times v$, such that 
    \begin{equation*}
        \d_m v,\ \d_m w\in C([-T,T]:H^\infty)\quad \text{for }m=1,\cdots,d,d+1
    \end{equation*}
    where $\d_{d+1}=\d_t$. In addition,
    \begin{equation}   \label{Coulomb}
        \text{if}\ A_m=(\d_m v)\cdot w\ \text{for }m=1,\cdots,d,\ \text{then }\sum_{j=1}^d \d_m A_m=0.
    \end{equation}
\end{prop}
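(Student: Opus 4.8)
\textbf{Proof proposal for Proposition \ref{frame}.}

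The plan is to reduce the statement to the existence of a global Coulomb gauge on $\R^d$ for the pullback bundle $\phi^*T\S^2$, which is the content cited from \cite{BeIoKe}. First I would fix an arbitrary smooth orthonormal frame $(v_0,w_0)$ for $T_\phi\S^2$: since $\phi-Q\in H^\infty$ decays at spatial infinity, $\phi$ is close to the constant map $Q$ outside a large ball, so one can take $(v_0,w_0)$ to agree with a fixed frame at $Q$ near infinity and extend smoothly (the pullback bundle is trivial because $\R^d$ is contractible, and the asymptotic triviality at infinity lets us choose a representative with $\d_m v_0,\d_m w_0\in H^\infty$). Any other frame in $T_\phi\S^2$ is obtained by a rotation $e^{\theta(x,t)J}$ in the $(v_0,w_0)$-plane, where $J$ is the complex structure $\phi\times\cdot$; under this change the connection form transforms as $A_m \mapsto A_m + \d_m\theta$. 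Hence the Coulomb condition $\sum_m \d_m A_m=0$ becomes the Poisson equation $\Delta\theta = -\sum_m \d_m A_m^{(0)}$, where $A_m^{(0)}=(\d_m v_0)\cdot w_0$.

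The key step is to solve this Poisson equation with good control. Since $A_m^{(0)}$ is built from $\d_m v_0, w_0 \in H^\infty$ and bounded functions, one has $\sum_m\d_m A_m^{(0)}\in H^\infty$ with, in fact, the structure of a divergence; solving $\theta = \Delta^{-1}(-\sum_m\d_m A_m^{(0)}) = -\sum_m \d_m \Delta^{-1} A_m^{(0)}$ via the Riesz potential produces $\nab\theta$ with $\d_m\theta\in H^\infty$ for every $m=1,\dots,d$, and similarly $\d_t\theta\in H^\infty$ because $\d_t A_m^{(0)}$ inherits the same regularity from $\d_t\phi\in C([-T,T]:H^\infty)$. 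Here one should be slightly careful: $\theta$ itself need not lie in $L^2$, only its derivatives do, but that is all that is needed since the rotation $e^{\theta J}$ only sees $\theta$ through the frame, and the continuity in $t$ of $v=e^{\theta J}v_0$, $w=\phi\times v$, together with $\d_m v,\d_m w \in C([-T,T]:H^\infty)$, follows from the elliptic estimates for $\Delta^{-1}$ applied uniformly in $t\in[-T,T]$ and the product estimates in $H^\infty = \bigcap_k H^k$. With this choice $A_m = A_m^{(0)}+\d_m\theta$ satisfies $\sum_m \d_m A_m = \sum_m\d_m A_m^{(0)} + \Delta\theta = 0$, which is \eqref{Coulomb}. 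Defining $w=\phi\times v$ gives $\phi\cdot v=0$ and $w=\phi\times v$ automatically.

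The main obstacle I anticipate is the low-frequency/decay bookkeeping in solving $\Delta\theta = -\sum_m\d_m A_m^{(0)}$: the solution operator $\d_m\Delta^{-1} = \mathcal R_m|\nab|^{-1}$ is nonlocal and borderline at low frequencies in three dimensions, so one must use the divergence structure (writing $\d_m\theta = -\sum_j \mathcal R_m\mathcal R_j A_j^{(0)}$, a bounded zeroth-order operator composition applied to an $H^\infty$ function) rather than estimating $\theta$ directly, and check that the resulting frame depends continuously on $t$. The rest — that $\phi$ being a Schr\"odinger map (or more generally the regularity hypotheses on $\phi,\d_t\phi$) propagates to the stated regularity of $v,w$ and their derivatives — is a routine application of Sobolev product estimates and the chain rule for $e^{\theta J}$, uniformly on the compact time interval $[-T,T]$. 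Since this is exactly Proposition 2.3 of \cite{BeIoKe}, I would in practice simply invoke that reference after recording the above reduction.
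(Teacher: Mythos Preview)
The paper does not give its own proof of this proposition: it is quoted verbatim as Proposition~2.3 of \cite{BeIoKe} and invoked as a black box. Your sketch---pick an arbitrary smooth frame $(v_0,w_0)$ with $\d_m v_0,\d_m w_0\in H^\infty$, then rotate by $e^{\theta J}$ where $\theta$ solves $\Delta\theta=-\sum_m\d_m A_m^{(0)}$ and read off $\d_m\theta=-\sum_j\mathcal R_m\mathcal R_j A_j^{(0)}$---is exactly the standard argument carried out in \cite{BeIoKe}, so your proposal is correct and matches the cited source; in the present paper one would simply refer to \cite{BeIoKe} rather than reproduce the details.
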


Assume that $\phi,v,w$ are as in Proposition \ref{frame}. In addition to the connection coefficients $A_m$, we can define the differentiated variables $\psi_m$ for $m=1,\cdots,d,d+1$ and $A_{d+1}$
\begin{equation}   \label{diff-field}
\psi_m=\d_m\phi\cdot v+i\d_m\phi\cdot w,\quad A_{d+1}=(\d_t v)\cdot w.
\end{equation}
These allow us to express $\d_m\phi,\ \d_m v$ and $\d_m w$ in the frame $(\phi,v,w)$ as
\begin{equation}    \label{phi-v-w}
\left\{ \begin{aligned}
&\d_m \phi=v\Re \psi_m+w\Im \psi_m,\\
&\d_m v=-\phi\Re \psi_m+w A_m,\\
&\d_m w=-\phi\Im \psi_m- v A_m.
\end{aligned}
\right.
\end{equation}
Denote the covariant derivative as
\begin{equation*}
D_l=\d_l+iA_l.
\end{equation*}
We then obtain the curl type relations between the variables $\psi_m$
\begin{equation}     \label{com-Dpsi}
D_l\psi_m=D_m\psi_l
\end{equation}
and the curvature of the connection 
\begin{equation}     \label{cp-A}
D_lD_m-D_mD_l=i(\d_l A_m-\d_m A_l)=i\Im (\psi_l\Bar{\psi}_m).
\end{equation}

\medskip

Assume that the smooth function $\phi$ satisfies the Schr\"odinger map equation in \eqref{Sys-Ori}. Then we derive the Schr\"odinger equations for the function $\psi_m$.  
By \eqref{Sys-Ori}, $\phi\times v=w$ and $w\times \phi=v$, we have
\begin{align} \label{psi_d+1}
\psi_{d+1}=-u\c \psi+iD_l\psi_l.
\end{align}
Applying $D_m$ to \eqref{psi_d+1}, by the relations \eqref{com-Dpsi} and \eqref{cp-A} we obtain
\begin{align*}
D_{d+1}\psi_m=-\d_m u\c \psi-u\c D\psi_m+\Im(\psi_l\bar{\psi}_m)\psi_l+iD_lD_l\psi_m,
\end{align*}
which is equivalent to
\begin{equation}\label{Sch-eq}
    \begin{aligned}   
i(\d_t+u\c \nab)\psi_m+\De \psi_m=&\ -2iA \c\nab\psi_m+(A_{d+1}+(u+A)\c A-i\nab\c A)\psi_m\\
&\ -i\d_m u\c \psi+i\Im(\psi_l\bar{\psi}_m)\psi_l.
\end{aligned}
\end{equation}

Consider the system of equations which consists of \eqref{Sch-eq}, \eqref{com-Dpsi} and \eqref{cp-A}. The solution $\psi_m$ for the above system cann't be uniquely determined as it depends on the choice of the orthonormal frame $(v,w)$. Precisely, it is invariant with respect to the gauge transformation
\[
\psi_m\rightarrow e^{i\theta}\psi_m,\quad A_m\rightarrow A_m+\d_m \theta.
\]
In order to obtain a well-posed system one needs to make a choice which uniquely determines the gauge. Here we choose to use the Coulomb gauge \eqref{Coulomb}, which in view of \eqref{cp-A} leads to
\begin{equation}  \label{A_m-Eq}
\De A_m=\d_l \Im (\psi_l\bar{\psi}_m).
\end{equation}
Similarly, by \eqref{cp-A} and \eqref{psi_d+1} we also have the compatibility condition
\begin{align*}
\d_t A_m-\d_m A_{d+1}=\Im (\psi_{d+1}\bar{\psi}_m)=\Im [(-u\c \psi+iD_l\psi_l)\bar{\psi}_m].
\end{align*}
This combined with the Coulomb gauge implies
\begin{equation}\label{A_d+1-Eq}
\De  A_{d+1}=\d_m\Im [(u\c \psi-iD_l\psi_l)\bar{\psi}_m].
\end{equation}

In conclusion, under the Coulomb gauge $\nab\c A=0$ by \eqref{Sch-eq}, \eqref{A_m-Eq} and \eqref{A_d+1-Eq} we obtain the system for velocity $u$ and differentiated fields $\psi_m$
\begin{equation}\label{sys-2}
\left\{ \begin{aligned}
&\d_t u+u\cdot\nab u+\nab P=\Delta u-\d_j\Re(\psi \bar\psi_j),\\
&{\rm div}u=0,\\
&\begin{aligned}
i(\d_t+u\c \nab)\psi_m+\De \psi_m=&-2iA \c\nab\psi_m+(A_{d+1}+(u+A)\c A)\psi_m\\
&-i\d_m u\c \psi+i\Im(\psi_l\bar{\psi}_m)\psi_l,
\end{aligned}\\
&(u,\psi)\big|_{t=0}=(u_0,\psi_0),
\end{aligned}\right.
\end{equation}
where connection coefficients $A_m$ and $A_{d+1}$ are determined at fixed time in an elliptic fashion via the following equations
\begin{equation}   \label{A,Ad+1}
    \left\{\begin{aligned}
    &\De A_m=\d_l \Im (\psi_l\bar{\psi}_m),\\
    &\De  A_{d+1}=\d_m\Im [(u\c \psi-iD_l\psi_l)\bar{\psi}_m].
\end{aligned}   \right. 
\end{equation}
we can assume that the following
conditions hold at infinity in an averaged sense:
\[A_m(\infty) = 0;\quad A_{d+1}(\infty)=0.\]
These are needed to insure the unique solvability of the above elliptic equations in a suitable class of functions.

Therefore, to obtain our global solution for system \eqref{Sys-Ori}, it suffices to study the differentiated system \eqref{sys-2}-\eqref{A,Ad+1}.

\medskip

\bigskip 
\section{Function spaces and the main propositions} 
In this section, we start by summarizing our main definitions and notation. The decay estimates of heat operator and some useful lemmas are also stated here. Then we introduce the vector fields corresponding to the symmetry of \eqref{Sys-Ori}, which are used to define the function spaces $\mathcal H^k_\La$ and generalized energy $E_k$. Finally, we transfer the initial data from \eqref{Sys-Ori} to the differentiated system \eqref{sys-2}-\eqref{A,Ad+1}, thus we can state our main bootstrap proposition.

\subsection{Some analysis tools}
For a function $u(t,x)$ or $u(x)$, let $\hat{u}=\FF u$ denote the Fourier transform in the spatial variable $x$. Fix a smooth radial function $\varphi:\R^d \rightarrow [0,1] $ supported in $[-2,2]$ and equal to 1 in $[-1,1]$, and for any $j\in \Z$, let
\begin{equation*}
	\varphi_j(x):=\varphi(x/2^j)-\varphi(x/2^{j-1}).
\end{equation*}
We then have the spatial Littlewood-Paley decomposition,
\begin{equation*}
	\sum_{j=-\infty}^{\infty}P_j (D)=1, 
\end{equation*}
where $P_j$ localizes to frequency $2^j$ for $i\in \Z$, i.e,
\begin{equation*}
	 \FF(P_ju)=\varphi_j(\xi)\hat{u}(\xi).
\end{equation*}
For simplicity of notation, we set
\[
u_j=P_j u,\quad u_{\leq j}=\sum_{i=-\infty}^j P_i u,\quad u_{\geq j}=\sum_{i=j}^{\infty} P_i u.
\]

In the proof of energy estimates, one often needs to analyze the symbols. We define a class of symbol as follows
\begin{equation*}
\mathcal{S}^{\infty}:=\{m:\R^6\rightarrow\C,\ m\ \text{is continous and } \lV\mathcal{F}^{-1}m\rV_{L^1}<\infty\},
\end{equation*}
whose associated norms are defined as
\begin{equation*}
\lV m\rV_{\mathcal{S}^{\infty}}:=\lV \mathcal{F}^{-1}m\rV_{L^1},
\end{equation*}
and
\begin{equation*}
\lV m\rV_{\mathcal{S}^{\infty}_{k,k_1,k_2}}:=\lV m(\xi,\eta)\varphi_k(\xi)\varphi_{k_1}(\xi-\eta)\varphi_{k_2}(\eta)\rV_{\mathcal{S^{\infty}}}.
\end{equation*}
Then we have

\begin{lemma}[Bilinear estimate, \cite{IoPu}]      \label{Sym_Pre_lemma}
	Given $m\in\mathcal{S}^{\infty}$ and two well-defined functions $f_1,\ f_2$, then the following estimate holds:
	\begin{gather}  \label{Sym_Pre}
	\lV \mathcal{F}^{-1}\big(\int_{\R^3}m(\xi,\eta)\widehat{f}_1(\xi-\eta)\widehat{f}_2(\eta)d\eta\big)(x)\rV_{L^r}\lesssim\lV m\rV_{\mathcal{S}^{\infty}} \lV f_{L^1}\rV_{L^p} \lV f_2\rV_{L^q},\ \ \frac{1}{r}=\frac{1}{p}+\frac{1}{q}.
	\end{gather}
\end{lemma}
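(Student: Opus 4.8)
The statement to prove is Lemma~\ref{Sym_Pre_lemma}, the bilinear estimate: given $m\in\mathcal{S}^{\infty}$,
\[
\Bigl\lVert \mathcal{F}^{-1}\Bigl(\int_{\R^3}m(\xi,\eta)\widehat{f}_1(\xi-\eta)\widehat{f}_2(\eta)\,d\eta\Bigr)(x)\Bigr\rVert_{L^r}\lesssim \lVert m\rVert_{\mathcal{S}^{\infty}}\,\lVert f_1\rVert_{L^p}\,\lVert f_2\rVert_{L^q},\qquad \tfrac1r=\tfrac1p+\tfrac1q.
\]

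\textbf{Approach.} The plan is to reduce the bilinear multiplier operator to a superposition of modulations (pure exponential symbols), for which the operator is literally a product of two translated functions, and then integrate against the $L^1$ density $\mathcal{F}^{-1}m$. Concretely, write $m(\xi,\eta)=\int_{\R^3\times\R^3} K(y,z)\,e^{i y\cdot\xi}\,e^{i z\cdot\eta}\,dy\,dz$ where $K=\mathcal{F}^{-1}m$ (Fourier inversion on $\R^6$, with the convention that pairs $(\xi,\eta)$ are the Fourier variables), so that $\lVert K\rVert_{L^1(\R^6)}=\lVert m\rVert_{\mathcal{S}^{\infty}}$. It is cleanest to change variables so that the two arguments of $m$ are $\xi-\eta$ and $\eta$: set $m(\xi,\eta)=\widetilde m(\xi-\eta,\eta)$ and write $\widetilde m(a,b)=\int K(y,z)e^{iy\cdot a}e^{iz\cdot b}\,dy\,dz$. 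Substituting into the operator and using $\widehat f_1(\xi-\eta)e^{iy\cdot(\xi-\eta)}$, $\widehat f_2(\eta)e^{iz\cdot\eta}$ as the Fourier transforms of the translates $f_1(\cdot+y)$, $f_2(\cdot+z)$, Fubini gives
\[
\mathcal{F}^{-1}\Bigl(\int m(\xi,\eta)\widehat f_1(\xi-\eta)\widehat f_2(\eta)\,d\eta\Bigr)(x)=\int_{\R^6} K(y,z)\,f_1(x+y)\,f_2(x+z)\,dy\,dz.
\]

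\textbf{Key steps in order.} First, justify the Fourier representation of $m$ and the identity above for, say, Schwartz $f_1,f_2$, where all integrals converge absolutely and Fubini applies; this is where one must be careful that $\mathcal{F}^{-1}m\in L^1$ is exactly the hypothesis that makes everything legitimate. Second, take $L^r$ norms and move them inside the $dy\,dz$ integral by Minkowski's integral inequality:
\[
\Bigl\lVert \int_{\R^6} K(y,z) f_1(\cdot+y) f_2(\cdot+z)\,dy\,dz\Bigr\rVert_{L^r_x}\le \int_{\R^6}|K(y,z)|\,\bigl\lVert f_1(\cdot+y) f_2(\cdot+z)\bigr\rVert_{L^r_x}\,dy\,dz.
\]
Third, apply H\"older's inequality with $\frac1r=\frac1p+\frac1q$ to the inner norm: $\lVert f_1(\cdot+y)f_2(\cdot+z)\rVert_{L^r}\le \lVert f_1(\cdot+y)\rVert_{L^p}\lVert f_2(\cdot+z)\rVert_{L^q}=\lVert f_1\rVert_{L^p}\lVert f_2\rVert_{L^q}$ by translation invariance. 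Fourth, pull the constants out of the $y,z$ integral, leaving $\int_{\R^6}|K(y,z)|\,dy\,dz=\lVert m\rVert_{\mathcal{S}^{\infty}}$, which yields the claimed bound. Finally, extend from Schwartz functions to general $f_1\in L^p$, $f_2\in L^q$ (the ``well-defined functions'' of the statement) by density, using the a priori bound just obtained.

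\textbf{Main obstacle.} The only genuine subtlety is the first step: making sense of $m$ via Fourier inversion and justifying the interchange of integrals, which requires $\mathcal{F}^{-1}m\in L^1(\R^6)$ and a density argument so the bilinear form is well-defined on the rough function classes in the statement; none of the subsequent steps is more than Minkowski plus H\"older plus translation invariance. One should also keep careful track of the change of variables $(\xi,\eta)\mapsto(\xi-\eta,\eta)$, since this is a unimodular linear map and hence harmless for the $L^1$ norm of the density, but the bookkeeping must be consistent with the convention chosen for $\mathcal{S}^{\infty}$ (whether it is defined in the $(\xi,\eta)$ variables or the $(\xi-\eta,\eta)$ variables); I would simply define $\mathcal{F}^{-1}m$ in whichever pair of variables appears in the operator, so that $\lVert\mathcal{F}^{-1}m\rVert_{L^1}$ is manifestly the relevant quantity.
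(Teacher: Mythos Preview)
Your argument is correct and is precisely the standard proof of this bilinear multiplier estimate: represent $m$ via Fourier inversion against its $L^1(\R^6)$ density, rewrite the operator as $\int K(y,z)\,f_1(\cdot+y)f_2(\cdot+z)\,dy\,dz$, then apply Minkowski, H\"older, and translation invariance. The paper does not supply its own proof of this lemma; it simply cites \cite{IoPu}, and your approach is exactly the argument given there.
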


\subsection{Vector fields and function spaces}  \label{3.2}
Here we start with the usual Sobolev spaces $W^{s,p}$, which are defined by
\[ \|f\|_{W^{s,p}}=\sum_{k=0}^s\|f\|_{\dot{W}^{k,p}}=\sum_{k=0}^s\sum_{|\al|=k}\| \d^\al f  \|_{L^p},\quad \text{for }\al\in \N^d,\ s\in \N,\ 1\leq p<\infty.  \]
When $p=2$, the Sobolev spaces $H^s$ for any $s\in \R$ are also defined by
\[ \|f\|_{H^s}=\|\<\xi\>^s\widehat{f}(\xi)\|_{L^2},  \]
where $\<\xi\>=\sqrt{1+|\xi|^2}$ and $\widehat{f}$ is the Fourier transform of $f$.

Given a point $Q\in \S^2$, we recall the extrinsic Sobolev space $H^k_Q$ by 
\begin{equation*}
	H^k_Q:=\{ u:\R^d\rightarrow\R^3 : |u(x)|=1 \text{ and }u-Q\in H^k \},
\end{equation*}
which is equipped with the metric $d_Q(f,g)=\|f-g\|_{H^k}$. We also define the metric spaces 
\begin{equation*}
	H^\infty:=\bigcap_{k=1}^\infty H^k,\quad H^\infty_Q:=\bigcap_{k=1}^\infty H^k_Q.
\end{equation*}

Since $\phi$ actually satisfies a quadratic Schr\"odinger equation, we would use vector fields to define its weighted energy. Precisely, for the velocity $u(t,x)\in\R^3$, pressure $P(t,x)\in \R$ and direction field $\phi(t,x)\in \S^2$, we define the perturbed angular momentum operators $\tilde\Om$ as
\begin{equation*}   
	\tilde{\Om}_i u=\Om_i u+M_i\cdot u,\quad \tilde \Om_i P=\Om_i P,\quad \tilde \Om_i \phi=\Om_i \phi,
\end{equation*}
where $\Om=(\Om_1,\Om_2,\Om_3)=(x_2\d_3-x_3\d_2,x_3\d_1-x_1\d_3,x_1\d_2-x_2\d_1)=x\wedge \nab$ is the usual rotation vector field and the matrices $M_i$ are given by
\begin{equation*}
	M_1=\left(\begin{array}{ccc}
		0&0&0\\
		0&0&1\\
		0&-1&0
	\end{array}\right),\ \ 
	M_2=\left(\begin{array}{ccc}
		0&0&-1\\
		0&0&0\\
		1&0&0
	\end{array}\right),\ \ 
	M_3=\left(\begin{array}{ccc}
		0&1&0\\
		-1&0&0\\
		0&0&0
	\end{array}\right).
\end{equation*}
In view of the scaling invariance, we can also define the perturbed scaling vector-field $\tilde S$ applied to the unknowns $u,\ P,\ \phi$ in \eqref{Sys-Ori} by
\begin{align*}
	\tilde S u=(S+1)u,\quad \tilde S P=(S+2)P,\quad \tilde S\phi=S\phi,
\end{align*}
where $S=2t\d_t+x\cdot \nab$.

For simplicity of notations, we set
\begin{equation*}   
	Z_i\in \{\d_t,\d_1,\d_2,\d_3,\tilde\Om_1,\tilde\Om_2,\tilde\Om_3,\tilde S\},\quad Z^a=Z_1^{a_1}\cdots Z^{a_8}_8,
\end{equation*}
where $a=(a_1,a_2,\cdots,a_8)\in \N^8$. Note that the commutator of any two $Z$'s is again a $Z$.
Then we define the Klainerman's generalized energy $E_k$ for $k\leq 1$ by
\begin{align*}  
	\EE(t)&=\|(u,\phi)\|_{H^3\times H^4_Q}+ \lV\nab u\rV_{L^2([0,t]:H^3)}+\sum_{|a|=1} \Big\{\lV (Z^a u,\nab Z^a\phi)\rV_{H^1}+ \lV\nab Z^a u\rV_{L^2([0,t]:H^1)}\Big\}.
\end{align*}

In order to characterize the initial data, we introduce the time independent analogue of $Z$. Set
\begin{align*}     
	\Lambda\in \{ \d_1,\d_2,\d_3,\tilde\Om_1,\tilde\Om_2,\tilde\Om_3,\tilde S_0  \},\quad \tilde S_0=\tilde S-2t\d_t.
\end{align*}
The commutator of any two $\Lambda$'s is again a $\Lambda$. We define the function spaces $\mathcal H^k_\Lambda$ as  
\begin{equation*}         
	\mathcal H_\Lambda:=\Big\{ (u,\phi):  \|(u,\phi)\|_{H^3\times H^4_Q}+\sum_{|a|=1}\lV  (\Lambda^a u,\nab \Lambda^a\phi)\rV_{H^1}< \infty \Big\}.
\end{equation*}

Next, we state two basic lemmas.
\begin{lemma}[Decay estimates of heat operator]
	For any Schwartz function $f\in \mathcal{S}(\R^3)$, we have
	\begin{gather}        \label{Dec_heat}
	\lV e^{t\De}|\nab|^l f\rV_{\dot{W}^{N,q}}\lesssim t^{-\frac{3}{2}(\frac{1}{p}-\frac{1}{q})-\frac{l}{2}} \lV f\rV_{\dot{W}^{N,p}},\ \ \ \  1\leq p\leq q\leq \infty.
	\end{gather}
\end{lemma}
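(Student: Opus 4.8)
The plan is to realize $e^{t\Delta}|\nabla|^{l}$ as convolution against a kernel with transparent scaling, and then apply Young's convolution inequality. First observe that $e^{t\Delta}|\nabla|^{l}$ is the Fourier multiplier with symbol $|\xi|^{l}e^{-t|\xi|^{2}}$, i.e. convolution with the kernel $\Phi_t$ defined by $\widehat{\Phi_t}(\xi)=|\xi|^{l}e^{-t|\xi|^{2}}$. The change of variables $\xi\mapsto\xi/\sqrt{t}$ gives the exact identity $\Phi_t(x)=t^{-(3+l)/2}\Phi_1(x/\sqrt{t})$, where $\Phi_1=\FF^{-1}\big(|\xi|^{l}e^{-|\xi|^{2}}\big)$.

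The only real work is to check that $\Phi_1\in L^{1}(\R^3)\cap L^{\infty}(\R^3)$, hence $\Phi_1\in L^{r}(\R^3)$ for every $r\in[1,\infty]$ by interpolation, with $\|\Phi_1\|_{L^{r}}$ a finite constant depending only on $l$ and $r$. The $L^{\infty}$ bound is immediate since $|\xi|^{l}e^{-|\xi|^{2}}\in L^{1}(\R^3)$. For the $L^{1}$ bound, split the symbol with a smooth cutoff $\chi$ equal to $1$ near the origin: on the support of $1-\chi$ the symbol is smooth with all derivatives rapidly decaying, so its inverse transform is Schwartz; the piece $\chi(\xi)|\xi|^{l}e^{-|\xi|^{2}}$ is compactly supported and equals $|\xi|^{l}$ times a smooth function, so a standard computation (homogeneity of $\FF^{-1}|\xi|^{l}$, together with repeated integration by parts away from the origin) shows its inverse transform decays like $\langle x\rangle^{-3-l}$, which is integrable on $\R^3$ since $3+l>3$. (When $l$ is a non-negative even integer the symbol is a polynomial times a Gaussian and $\Phi_1$ is Schwartz outright.) From the scaling identity one then reads off $\|\Phi_t\|_{L^{r}}=t^{-l/2-\frac{3}{2}(1-\frac{1}{r})}\|\Phi_1\|_{L^{r}}$.

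With this in hand the estimate is short. Given $1\le p\le q\le\infty$, choose $r\in[1,\infty]$ by $\frac{1}{r}=1-(\frac{1}{p}-\frac{1}{q})$, which is admissible since $\frac{1}{p}-\frac{1}{q}\in[0,1]$; Young's inequality then gives
\begin{equation*}
\|e^{t\Delta}|\nabla|^{l}f\|_{L^{q}}=\|\Phi_t*f\|_{L^{q}}\le\|\Phi_t\|_{L^{r}}\|f\|_{L^{p}}\lesssim t^{-l/2-\frac{3}{2}(\frac{1}{p}-\frac{1}{q})}\|f\|_{L^{p}},
\end{equation*}
using $1-\frac{1}{r}=\frac{1}{p}-\frac{1}{q}$. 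Finally, for the $\dot W^{N,q}$ bound, apply the displayed inequality with $f$ replaced by $\partial^{\alpha}f$ for each multi-index $|\alpha|=N$, since $\partial^{\alpha}$ commutes with the Fourier multiplier $e^{t\Delta}|\nabla|^{l}$ and $\|\partial^{\alpha}f\|_{L^{p}}\le\|f\|_{\dot W^{N,p}}$, then sum over the finitely many such $\alpha$. The main obstacle is exactly the integrability of $\Phi_1$ at spatial infinity for odd or non-integer $l$, i.e. controlling the inverse Fourier transform of the mild homogeneous singularity $|\xi|^{l}$ at the origin; the rest is bookkeeping of scaling exponents.
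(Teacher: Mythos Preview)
Your argument is correct and is the standard proof of this classical heat-kernel estimate; the paper in fact states the lemma without proof, treating it as well known.

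One small technical point worth tightening: your claim that the inverse transform of the localized piece $\chi(\xi)|\xi|^{l}e^{-|\xi|^{2}}$ decays like $\langle x\rangle^{-3-l}$ is correct, but ``repeated integration by parts away from the origin'' alone does not quite deliver it for small non-integer $l$ (say $0<l<1$): after three integer integrations by parts the differentiated symbol behaves like $|\xi|^{l-3}$ near the origin, which is still integrable, but this only yields $|x|^{-3}$ decay, just short of $L^{1}$. The clean justification is either the distributional identity $\FF^{-1}(|\xi|^{l})=c_{l}|x|^{-3-l}$ (valid for $l>0$ not an even integer), or a dyadic argument showing $\|P_{j}\Phi_{1}\|_{L^{1}}\lesssim 2^{jl}$ for $j<0$ and rapid decay for $j\ge 0$, which sums precisely when $l>0$. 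Either route closes the gap without affecting the rest of your proof.
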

\begin{proof}
    The proof of \eqref{Dec_heat} is standard, which can be obtained by the estimates
    \begin{align*} 
    \| e^{t\De }|\nab|^l f \|_{L^q} &= \| \mathcal F^{-1}( e^{-t|\xi|^2}|\xi|^l ) \ast f \|_{L^q}\\
    &\les \| \mathcal F^{-1}( e^{-t|\xi|^2}|\xi|^l ) \|_{L^r} \| f \|_{L^p}\les t^{-\frac{3}{2}(\frac{1}{p}-\frac{1}{q})-\frac{l}{2}}\|f\|_{L^p},    
    \end{align*}
    where $1+1/q=1/r+1/p$.
\end{proof}

We also need the following lemma to connect the weighted $L^2$-norm of profile to weighted energy.
\begin{lemma}[Lemma 2.7, \cite{HJLZ21}]
Let $\Om=x\wedge \nab$ be rotation vector field. For any $N\geq 0$ and Schwartz function $f\in\mathcal{S}(\R^3)$, we have
	\begin{equation}           \label{xidxif}
	\lV\FF^{-1}(|\xi|\nab_{\xi}\widehat{f})\rV_{H^N}\lesssim \lV x\cdot \nab_x f\rV_{H^N}+\lV \Om f\rV_{H^N}+\lV f\rV_{H^N}.
	\end{equation}
\end{lemma}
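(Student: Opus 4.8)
The statement to prove is Lemma~2.7 (the last displayed lemma in the excerpt), namely the inequality
\[
\lV\FF^{-1}(|\xi|\nab_{\xi}\widehat{f})\rV_{H^N}\lesssim \lV x\cdot \nab_x f\rV_{H^N}+\lV \Om f\rV_{H^N}+\lV f\rV_{H^N},
\]
which is the Fourier-side version of the bound $\|\GG\nab f\|_{L^2}\lesssim \|(x\cdot\nab+2it\De)f\|_{L^2}+\|\Om f\|_{L^2}$ invoked in the introduction.

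\textbf{Plan of proof.} The plan is to work entirely on the Fourier side and decompose the radial dilation field $|\xi|\nab_\xi$ into a purely radial part and a purely angular part, recognizing that the angular part is exactly $\widehat{\Om f}$. First I would write $\nab_\xi = \frac{\xi}{|\xi|}\partial_{|\xi|} + \frac{1}{|\xi|}\nab_\xi^{\mathrm{sph}}$, where $\nab^{\mathrm{sph}}_\xi$ denotes the spherical (tangential) gradient on the sphere of radius $|\xi|$; multiplying by $|\xi|$ gives
\[
|\xi|\nab_\xi\widehat{f} = \xi\,\partial_{|\xi|}\widehat f + \nab^{\mathrm{sph}}_\xi\widehat f .
\]
The angular term $\nab^{\mathrm{sph}}_\xi\widehat f$ has components that are linear combinations of $\xi_i\partial_{\xi_j}\widehat f - \xi_j\partial_{\xi_i}\widehat f$, which are precisely $\pm i\,\widehat{\Om_k f}$ up to reindexing (since $\Om = x\wedge\nab$ corresponds under Fourier transform to $-\xi\wedge\nab_\xi$, i.e.\ $\widehat{\Om_k f} = -(\xi\wedge\nab_\xi)_k\widehat f$). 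Hence $\|\FF^{-1}(\nab^{\mathrm{sph}}_\xi\widehat f)\|_{H^N}\lesssim \|\Om f\|_{H^N}$ immediately, with the $\langle\xi\rangle^N$ weight commuting harmlessly with these operators (or absorbed into the routine Leibniz bookkeeping on the $H^N$ norm).

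\textbf{The radial part.} It remains to handle $\xi\,\partial_{|\xi|}\widehat f = \frac{\xi}{|\xi|}\,(\xi\cdot\nab_\xi\widehat f)$, which I would compare to $\widehat{x\cdot\nab_x f}$. Under the Fourier transform, $x\cdot\nab_x f$ corresponds to $-(d + \xi\cdot\nab_\xi)\widehat f = -(\xi\cdot\nab_\xi\widehat f) - d\,\widehat f$ (in dimension $d=3$). Therefore $\xi\cdot\nab_\xi\widehat f = -\widehat{(x\cdot\nab_x + d)f}$, and
\[
\xi\,\partial_{|\xi|}\widehat f = \frac{\xi}{|\xi|}\Bigl(-\widehat{(x\cdot\nab_x)f} - d\,\widehat f\Bigr).
\]
Since $\xi/|\xi| = \mathcal{R}$ is (componentwise) the Riesz transform symbol, which is bounded on $H^N$, taking inverse Fourier transform and $H^N$ norms yields $\|\FF^{-1}(\xi\,\partial_{|\xi|}\widehat f)\|_{H^N}\lesssim \|x\cdot\nab_x f\|_{H^N} + \|f\|_{H^N}$. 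Combining the two pieces via the triangle inequality gives the claimed bound; the $\langle\xi\rangle^N$ weight is threaded through by noting $\langle\xi\rangle^N |\xi|\nab_\xi\widehat f = \langle\xi\rangle^N\xi\partial_{|\xi|}\widehat f + \langle\xi\rangle^N\nab^{\mathrm{sph}}_\xi\widehat f$ and that $\langle\xi\rangle^N$ commutes with the radial/angular splitting up to terms controlled by $\|f\|_{H^N}$ (the only subtlety being that $\partial_{|\xi|}$ hitting $\langle\xi\rangle^N$ produces a bounded multiplier times $\langle\xi\rangle^N$).

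\textbf{Main obstacle.} The only genuine care needed is the behavior near $\xi = 0$: the individual expressions $\frac{\xi}{|\xi|}\partial_{|\xi|}\widehat f$ and $\frac{1}{|\xi|}\nab^{\mathrm{sph}}_\xi\widehat f$ involve the singular factor $|\xi|^{-1}$, but their sum $|\xi|\nab_\xi\widehat f$ is smooth there, and each of the operators $x\cdot\nab_x$, $\Om$, and the Riesz transforms is well-behaved on $\R^3$ (note $\Om$ annihilates the singularity automatically, and $\xi\cdot\nab_\xi\widehat f$ vanishes linearly at the origin relative to $\widehat f$). I would therefore either argue by density of Schwartz functions and verify the decomposition holds pointwise away from $\xi=0$ while the full expression extends continuously, or simply cite that the combination is what appears and each bounded-multiplier step is legitimate. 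Since the paper attributes this to \cite{HJLZ21}, a short proof along these lines, or a direct reference, suffices.
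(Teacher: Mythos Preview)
Your argument is correct. The paper does not supply its own proof of this lemma---it is quoted without proof from \cite{HJLZ21}---so there is nothing to compare against, but your radial/angular decomposition via the identity $|\xi|\nab_\xi\widehat f=\frac{\xi}{|\xi|}(\xi\cdot\nab_\xi\widehat f)-\frac{\xi}{|\xi|}\times(\xi\wedge\nab_\xi)\widehat f$ is exactly the expected route.

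Two small points of precision: first, the sign is $\widehat{\Om f}=+(\xi\wedge\nab_\xi)\widehat f$, not $-$, though this is irrelevant for the inequality. Second, the components of your ``spherical gradient'' term are not \emph{constant} linear combinations of the $\widehat{\Om_k f}$ but combinations with coefficients $\xi_j/|\xi|$, i.e.\ Riesz symbols; since Riesz transforms are bounded on $H^N$ and commute with $\langle\nab\rangle^N$, your conclusion $\|\FF^{-1}(\nab_\xi^{\mathrm{sph}}\widehat f)\|_{H^N}\lesssim\|\Om f\|_{H^N}$ still follows immediately. The concern about $\xi=0$ is not really an obstacle: each piece involves only the bounded multiplier $\xi/|\xi|$ applied to a function that is already in $H^N$, so no separate argument is needed.
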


\subsection{The main bootstrap proposition}
Here we consider the reformulated system \eqref{sys-2}-\eqref{A,Ad+1}, and state our main bootstrap proposition. The operators $\tilde \Om$ and $\tilde S$ applied to the variables $\psi=(\psi_1,\psi_2,\psi_3),\ A=(A_1,A_2,A_3)$ and $A_{d+1}$ are given by
\begin{align*}
	&\tilde \Om_i \psi=\Om_i \psi+M_i \cdot\psi,\quad \tilde \Om_i A=\Om_i A+M_i\cdot A,\quad \tilde \Om_i A_{d+1}=\Om_i A_{d+1}, \\
	&\tilde{S}\psi=(S+1)\psi,\quad \tilde S A_m=(S+1)A_m,\quad \tilde S A_{d+1}=(S+2)A_{d+1}.
\end{align*}
Then applying the vector fields $Z$ to \eqref{sys-2} and \eqref{A,Ad+1}, we obtain 
\begin{equation}    \label{sys-VF}
\left\{\begin{aligned}
&(\d_t-\De)Z^a u=-\sum_{b+c=a}C_a^b Z^b u\c \nab Z^c u-\nab Z^a P-\sum_{b+c=a}C_a^b\d_j\Re(Z^b \bar{\psi}_j Z^c\psi),\\
&\div   Z^a u=0,\\
&\begin{aligned}
i\d_t Z^a \psi+\De Z^a\psi=&-i\sum_{b+c=a} C_a^b Z^b (u+2A)\cdot \nab Z^c\psi+\sum_{b+c=a}C_a^b Z^b A_{d+1} Z^c\psi\\
&+\sum_{b+c+e=a} C_a^{b,c} Z^b (A+u)\cdot Z^c A Z^e\psi\\
&-i\sum_{b+c=a} C_a^b \nab Z^b u\cdot Z^c\psi+i\sum_{b+c+e=a}C_a^{b,c} \Im(Z^b \psi_l Z^c\bar{\psi})Z^e\psi_l,
\end{aligned}\\
\end{aligned}
\right.
\end{equation}
with connection coefficients $A$ and $A_{d+1}$ satisfy
\begin{align}  \label{Ell-A}
    \De Z^a A&=\sum_{b+c=a}C_a^b \d_l \Im (Z^b\psi_l Z^c\bar{\psi}),\\ \label{Ell-Ad1}
    \De Z^a A_{d+1}&=-\sum_{b+c=a} C_a^b \Re\d_m(\d_l Z^b \psi_l Z^c \bar{\psi}_m)\\\nonumber
    &\quad +\sum_{b+c+e=a}C_a^{b,c}\Im\d_m(Z^b (u+A)\cdot Z^c\psi Z^e \bar{\psi}_m),
\end{align}
where the coefficients $C_a^b$ and $C_a^{b,c}$ are
\[
C_a^b=\frac{a!}{b!(a-b)!},\quad C_a^{b,c}=\frac{a!}{b!c!(a-b-c)!}.
\]
We define the profile of $\psi$ by 
\begin{equation}\label{Psi-def}
\Psi=e^{-it\De} \psi.
\end{equation}

First, we prove quantitative estimates for the differentiated fields $\psi$ with respect to $\phi$.
\begin{lemma}[Bounds for $\psi$]\label{psi-by_phi-Lemma}
	With the notation in Proposition \ref{frame}, if the map $\phi$ has the additional property $\| \phi\|_{H_Q^{k+1}}+\sum_{|a|=1}\|\nab \Lambda^a\phi\|_{H^{k-2}}\leq \ep$ for any $k\geq 3$, then for its differentiated fields $\psi_m$ with $1\leq m\leq d$ we have the bounds 
	\begin{equation}\label{psi-by_phi}
		\|\psi\|_{H^k}+\sum_{|a|=1}\| \Lambda^a\psi\|_{H^{k-2}}\lesssim \|\phi\|_{H^{k+1}_Q}+\sum_{|a|=1}\|\nab \Lambda^a\phi\|_{H^{k-2}}.
	\end{equation}
	If $\| \phi\|_{H_Q^{k+1}}+\sum_{|a|=1}\|\nab Z^a\phi\|_{H^{k-2}}\leq \ep$ for $k\geq 3$, we also have
	\begin{equation}\label{psi-by_phiZ}
		\|\psi\|_{H^k}+\sum_{|a|=1}\| Z^a\psi\|_{H^{k-2}}\lesssim \|\phi\|_{H^{k+1}_Q}+\sum_{|a|=1}\|\nab Z^a\phi\|_{H^{k-2}}.
	\end{equation}
\end{lemma}
\begin{proof}
	The two bounds \eqref{psi-by_phi} and \eqref{psi-by_phiZ} are proved in a same way, here we only show the proof of \eqref{psi-by_phi}. We prove this bound by induction.

	First, we bound the term $\psi$. By $|v|=|w|=1$, \eqref{diff-field} and Sobolev embedding, we have
	\begin{align*}
		\|\psi\|_{L^2\cap L^\infty}\leq \|\nab\phi\|_{L^2\cap L^\infty}(\| v\|_{L^\infty}+\| w\|_{L^\infty})\leq \|\nab\phi\|_{H^k}.
	\end{align*}
Then \eqref{A,Ad+1} and \eqref{phi-v-w} combined this bound yield
	\begin{align*}
		\|A\|_{L^2}\lesssim \| P_{\leq 0}(\psi^2)\|_{L^1}+\| P_{> 0}(\psi^2)\|_{L^2}\lesssim \|\psi\|_{L^2}\|\psi\|_{L^2\cap L^\infty}\les  \|\nab\phi\|_{H^k},
	\end{align*}
	and 
	\begin{align} \label{bd-vw}
		\| \nab v\|_{L^2}+\| \nab w\|_{L^2}\lesssim\|(\phi,v,w)\|_{L^\infty} (\| \psi\|_{L^2}+\| A\|_{L^2})\lesssim \| \psi\|_{L^2}\les \|\nab\phi\|_{H^k}.
	\end{align}
 
	To prove \eqref{psi-by_phi} for $\psi$ in $H^k$, in view of the above bounds we can assume that 
	\begin{equation} \label{Ass-psin}
		\|\psi\|_{H^n}+\| A\|_{H^n}+\|\nab v\|_{H^n}+\|\nab w\|_{H^n}\lesssim \|\phi\|_{H^{k+1}_Q},\quad \text{for any }n<l\leq k.
	\end{equation}
	By this assumption, \eqref{diff-field} and $|\phi|=|v|=|w|=1$, we have
	\begin{align*}
		\|\nab^l \psi\|_{L^2}\lesssim &\ \|\nab^{l+1}\phi\|_{L^2}\|(v,w)\|_{L^\infty}+\sum_{l_1+l_2=l,0<l_2<l}\|\nab^{l_1+1}\phi\|_{L^4}\| \nab^{l_2}(v,w)\|_{L^4}\\
		&+\|\nab\phi\|_{L^\infty}\|\nab^l(v,w)\|_{L^2}\\
		\lesssim &\ \|\phi\|_{H^{k+1}_Q}(1+\|\nab(v,w)\|_{H^{l-1}})\lesssim  \|\phi\|_{H^{k+1}_Q}.
	\end{align*}
	Using the formula \eqref{A,Ad+1} to bound the $A$ by
	\begin{align*}
		\| \nab^l A\|_{L^2}\lesssim \| \nab^{l-1}(\psi^2)\|_{L^2}\lesssim \|\psi\|_{H^{l-1}}\|\psi\|_{L^\infty}\lesssim \|\phi\|_{H^{k+1}_Q}^2.
	\end{align*}
	Similarly, by \eqref{phi-v-w} we also have
	\begin{align*}
		\|\nab^{l+1}v\|_{L^2}\lesssim& \|\nab^l\psi\|_{L^2}+\sum_{l_1+l_2,l_2>0}\|\nab^{l_1}\psi\|_{L^4}\|\nab^{l_2}\phi\|_{L^4}+\| A\|_{H^l}\|\nab w\|_{H^{l-1}}\\
		\lesssim & \|\phi\|_{H^{k+1}_Q}+\| \psi\|_{H^l}^2\|\nab w\|_{H^{l-1}}\lesssim  \|\phi\|_{H^{k+1}_Q},
	\end{align*}
	and 
	\begin{align*}
		\|\nab^{l+1}w\|_{L^2}\lesssim  \|\phi\|_{H^{k+1}_Q}.
	\end{align*}
 Hence, the bound \eqref{Ass-psin} also holds for $n=l$. Then the bound \eqref{psi-by_phi} for $\psi$ in $H^k$ follows. 
	
	Next, we bound the term $\Lambda^a\psi$ for $|a|=1$. The operators $\tilde \Om$ and $\tilde S_0$ applied to tangent vector fields $v,\ w$ are denoted as 
	\begin{align*}
		\tilde{\Om}_i v=\Om_i v,\quad \tilde{\Om}_i w=\Om_i w,\quad \tilde{S}_0 v=x\cdot\nab v,\quad  \tilde{S}_0 w=x\cdot\nab w.
	\end{align*}
By \eqref{diff-field} and Sobolev embeddings we bound the $\Lambda^a \psi$ by
	\begin{align*}
		\sum_{|a|=1}\| \Lambda^a\psi\|_{L^2}\lesssim& \sum_{|a|=1}\| \La^a\nab  \phi\|_{L^2}+\sum_{|a|=1}\|\nab\phi\|_{L^3}\| (\La^av,\La^a w)\|_{L^6}\\
		\lesssim & \sum_{|a|\leq 1}\| \nab \La^a \phi\|_{L^2}+\sum_{|a|=1}\ep\|(\nab \La^a v,\nab \La^a w)\|_{L^2}.
	\end{align*}
From the formula \eqref{Ell-A}, we bound the $\Lambda^a A$ by
	\begin{align*}
		\sum_{|a|=1}\| \Lambda^a A\|_{L^2}\lesssim \sum_{|a|=1}\| \nab^{-1}(\Lambda^a\psi\cdot \psi)\|_{L^2}\lesssim \sum_{|a|=1}\| \Lambda^a\psi\|_{L^2}\|\psi\|_{L^2\cap L^\infty}.
	\end{align*}
Using the relation \eqref{phi-v-w}, we obtain 
	\begin{align*}
		\sum_{|a|=1}\|(\nab \Lambda^a v,\nab \Lambda^a w)\|_{L^2}&\lesssim \sum_{|a|=1}\| (\Lambda^a\psi,\Lambda^a A)\|_{L^2}+\|(\psi,A)\|_{L^3}\sum_{|a|=1}\|(\Lambda^a\phi,\Lambda^a v,\Lambda^a w)\|_{L^6}\\
		&\lesssim \sum_{|a|=1} \| \Lambda^a\psi\|_{L^2}+\ep \sum_{|a|=1}\|(\nab \Lambda^a\phi,\nab \Lambda^a v,\nab \Lambda^a w)\|_{L^2}.
	\end{align*}
	These imply
	\begin{align*}
		\sum_{|a|=1}\Big(\| \La^a \psi\|_{L^2}+\| \La^a A\|_{L^2}+\|(\nab \La^a v,\nab \La^a w)\|_{L^2}\Big)\lesssim \sum_{|a|=1}\| \nab \La^a \phi\|_{L^2}\lesssim \ep.
	\end{align*}
	With the bound at hand, we can assume that
	\begin{align}   \label{IndAss-n}
		\sum_{|a|=1}\Big(\| \La^a\psi\|_{H^n}+\| \La^a A\|_{H^n}+\|(\nab \La v,\nab \La w)\|_{H^n}\Big)\lesssim \ep,\quad \text{for any }n<l\leq k-2.
	\end{align}
By \eqref{diff-field}, \eqref{Ell-A}, \eqref{phi-v-w} and Sobolev embedding we have
	\begin{align*}
		\sum_{|a|=1}\|\nab^l \La^a\psi\|_{L^2}\lesssim& \sum_{l_1+l_2=l}\|\nab^{l_1}\La^a \nab\phi\|_{L^2}\|\nab^{l_2}(v,w)\|_{L^\infty}\\
		&+\sum_{l_1+l_2=l,l_2>0}\sum_{|a|=1}\|\nab^{l_1+1}\phi\|_{L^\infty}\|(\nab^{l_2}\La^a v,\nab^{l_2}\La^a w)\|_{L^2}\\
		&+\|\nab^{l+1}\phi\|_{L^3}\sum_{|a|=1}\|(\La^a v,\La^a w)\|_{L^6}\\
		\lesssim &\ \ep+\|\nab^l \phi\|_{H^{l+1}}\sum_{|a|=1}\|(\nab \La^a v,\nab \La^a w)\|_{L^2}\lesssim \ep,
	\end{align*}
	\begin{align*}
		\sum_{|a|=1}\| \nab^l \La^a A\|_{L^2}\lesssim \sum_{|a|=1}\|\nab^{l-1}(\La^a\psi\cdot\psi)\|_{L^2}\lesssim \sum_{|a|=1}\| \La^a \psi\|_{H^{l-1}}\|\psi\|_{H^{l+1}}\lesssim \ep^2 ,
	\end{align*}
	and 
	\begin{align*}
		\sum_{|a|=1}\|\nab^{l+1} (\La^a v,\La^a w)\|_{L^2}&\lesssim  \|\nab^{l+1}(v,w)\|_{L^2}+\sum_{|a|=1}\|\nab^l \La^a(\psi\phi_0+A(v,w))\|_{L^2}\\
		&\lesssim \ep+ \sum_{l_1+l_2=l}\sum_{|a|=1}\|\nab^{l_1}(\La^a \psi,\La^a  A)\|_{L^2}\|\nab^{l_2}(\phi,v,w)\|_{L^\infty}\\
		&\quad +\sum_{l_1+l_2=l,l_2>0}\sum_{|a|=1}\|\nab^{l_1}(\psi,A)\|\|\nab^{l_2}(\La^a\phi,\La^av,\La^aw)\|_{L^2}\\
		&\quad +\|\nab^{l}(\psi,A)\|_{L^3}\sum_{|a|=1}\|(\La^a\phi,\La^a v,\La^a w)\|_{L^6}\\
		&\lesssim  \ep+\ep\sum_{|a|=1}\| \La^a\psi\|_{H^{l}}\lesssim \ep.
	\end{align*}
    These bounds imply that the bound \eqref{IndAss-n} also hold for $n=l$. Hence, we obtain the estimate \eqref{psi-by_phi} and complete the proof of the lemma.
\end{proof}

The bound \eqref{MainAss_ini} combined with the above Lemma \ref{psi-by_phi-Lemma} deduces the initial data $(u_0,\psi_0)$ for the differentiated system \eqref{diff-field}, which satisfies
\begin{equation}    \label{ini-diffSys}
	\|(u_0,\psi_0)\|_{H^3}+\sum_{|a|=1} \lV (\La^a u_0,\La^a\psi_0)\rV_{H^1} \les \epsilon_0.
\end{equation}
Now we state our main bootstrap proposition as follows:
\begin{prop}[Bootstrap proposition]\label{Main_Prop}
    Assume that $(u,\psi)$ is a solution to \eqref{sys-2}-\eqref{A,Ad+1} on some time interval $[0,T]$, $T\geq 1$ with initial data satisfying the assumption \eqref{ini-diffSys}.
    Assume also that the solution $(u,\psi)$ and the profile $\Psi=e^{-it\De}\psi$ satisfy the bootstrap hypothesis
    \begin{gather}           \label{Main_Prop_Ass1}
    \begin{aligned}
        \sup_{t\in[0,T]}\Big\{&\|(u,\psi)\|_{H^3}+\|\nab u\|_{L^2([0,t]:H^3)}\\
        &+\sum_{|a|=1}\Big(\|(Z^a u,Z^a\psi)\|_{H^1}+\|\nab Z^a u\|_{L^2([0,t]:H^1)}\Big)\Big\}\leq\epsilon_1, 
    \end{aligned} \\ \label{Main_Prop_Ass2}
        \sup_{t\in[0,T]}\lV x\c\nab \Psi\rV_{H^1} \leq \ep_1,
    \end{gather} 
    where $\ep_1=\ep_0^{2/3}$. Then the following improved bounds hold
    \begin{gather}\label{Main_Prop_result1}
    \begin{aligned}
        \sup_{t\in[0,T]}\Big\{&\|(u,\psi)\|_{H^3}+\|\nab u\|_{L^2([0,t]:H^3)}\\
        &+\sum_{|a|=1}\Big(\|(Z^a u,Z^a\psi)\|_{H^1}+\|\nab Z^a u\|_{L^2([0,t]:H^1)}\Big)\Big\}\les\epsilon_0, 
    \end{aligned}
          \\\label{Main_Prop_result2}
        \sup_{t\in[0,T]}\lV x\c\nab \Psi\rV_{H^1} \lesssim \ep_0.
    \end{gather}
\end{prop}

From this proposition and continuity method, the global existence of \eqref{sys-2}-\eqref{A,Ad+1} is obtained immediately. Hence, in the next sections, we will concern on the proofs of Proposition \ref{Main_Prop} and Theorem \ref{Main_thm-ori}.

\bigskip 
\section{Decay of velocity field and differentiated field}
In this section, we give the various decay estimates of $u$ and $\psi$ under the bootstrap hypothesis \eqref{Main_Prop_Ass1} and \eqref{Main_Prop_Ass2}, which will play key roles in the energy estimates in
the next sections. Here we start with the basic decay estimates.

\begin{lemma}[\cite{HaMiNa99}, Lemma 2.4]
    For any Schwartz function $f\in \mathcal{S}(\R^3)$ and $t>1$, we have
    \begin{align}   \label{df-decay}
        &\|\nab f\|_{L^6}\lesssim t^{-1}\| \Theta f\|_{L^2},\\   \label{f-decay}
        &\|f\|_{L^\infty}\lesssim t^{-\frac{3}{4}}\| \Theta f\|_{L^2}^{\frac{3}{4}}\|f\|_{L^2}^{\frac{1}{4}},
    \end{align}
    where $\Theta =(x\c\nab+2it\De,\Om)$.
\end{lemma}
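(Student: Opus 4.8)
This is a vector-field dispersive estimate and its proof uses no differential equation, only the algebra of the operator $\GG:=x+2it\nabla$ together with Sobolev embeddings on $\R^3$. The plan is: (i) factor $\GG$ through the phase $\mathcal{M}(t):=e^{i|x|^2/(4t)}$; (ii) show that the full first-order object $\GG\otimes\nabla f=(\GG_i\partial_j f)_{i,j}$ is controlled in $L^2$ by the single scalar $(x\cdot\nabla+2it\Delta)f=\GG\cdot\nabla f$ together with the rotations $\Omega f$, via a div--curl identity; (iii) deduce \eqref{df-decay} from $\dot H^1(\R^3)\hookrightarrow L^6(\R^3)$; and (iv) deduce \eqref{f-decay} from a Gagliardo--Nirenberg interpolation. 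For (i), a direct computation gives $\mathcal{M}(t)^{-1}\GG_j\,\mathcal{M}(t)=2it\,\partial_j$ on Schwartz functions, i.e.
\[
\nabla\big(\mathcal{M}(t)^{-1}h\big)=\frac{1}{2it}\,\mathcal{M}(t)^{-1}\,\GG h\qquad\text{for all }h\in\mathcal{S}(\R^3),
\]
and since $\mathcal{M}(t)^{-1}$ is an $L^2$-isometry this already yields $\|\nabla(\mathcal{M}(t)^{-1}h)\|_{L^2}=\tfrac{1}{2t}\|\GG h\|_{L^2}$.

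For (ii), write $g:=\mathcal{M}(t)^{-1}f$ and $T_j:=2it\,\partial_j-x_j$. Applying the identity from (i) to $h=\partial_j f$ and expanding (using $\mathcal{M}(t)^{-1}\partial_j f=(\partial_j+\tfrac{ix_j}{2t})g$) one finds $\mathcal{M}(t)^{-1}(\GG_i\partial_j f)=\partial_i(T_j g)$, while $\mathcal{M}(t)^{-1}(\Omega f)=\Omega g$ because $\Omega$ commutes with $\mathcal{M}(t)$. Now apply the elementary $\R^3$ Plancherel identity $\|\nabla W\|_{L^2}^2=\|\div W\|_{L^2}^2+\|\operatorname{curl}W\|_{L^2}^2$ to the vector field $W:=(T_1g,T_2g,T_3g)$, together with the pointwise identity $\operatorname{curl}W=\Omega g$ (the curl annihilating the second-derivative and $\delta_{ij}$ terms, leaving exactly $\Omega g$). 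This gives
\[
\|\GG\otimes\nabla f\|_{L^2}^2=\|(x\cdot\nabla+2it\Delta)f\|_{L^2}^2+\|\Omega f\|_{L^2}^2=\|\Theta f\|_{L^2}^2 .
\]
This is the heart of the lemma, and is exactly the inequality flagged in the introduction: the information lost in passing from the three components $\GG_i$ to the single contraction $\GG\cdot\nabla$ is precisely restored by the three rotations $\Omega$.

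For (iii), apply (i) with $h=\partial_k f$ and $\dot H^1(\R^3)\hookrightarrow L^6(\R^3)$ to get $\|\partial_k f\|_{L^6}=\|\mathcal{M}(t)^{-1}\partial_k f\|_{L^6}\lesssim\|\nabla(\mathcal{M}(t)^{-1}\partial_k f)\|_{L^2}=\tfrac{1}{2t}\|\GG\,\partial_k f\|_{L^2}$; squaring, summing over $k$, and invoking (ii) yields $\|\nabla f\|_{L^6}\lesssim t^{-1}\|\Theta f\|_{L^2}$, which is \eqref{df-decay}. For (iv), combine \eqref{df-decay} with the Gagliardo--Nirenberg inequality $\|f\|_{L^\infty(\R^3)}\lesssim\|f\|_{L^2}^{1/4}\|\nabla f\|_{L^6}^{3/4}$ (proved by a Littlewood--Paley decomposition: low frequencies are handled by the $L^2\!\to\!L^\infty$ Bernstein inequality, high frequencies by $\|\nabla f\|_{L^6}$, and rescaling produces the homogeneous form --- note that the naive endpoint embedding $\dot W^{1/2,6}\hookrightarrow L^\infty$ is false, so the two distinct Lebesgue exponents are essential) to obtain $\|f\|_{L^\infty}\lesssim\|f\|_{L^2}^{1/4}\big(t^{-1}\|\Theta f\|_{L^2}\big)^{3/4}=t^{-3/4}\|\Theta f\|_{L^2}^{3/4}\|f\|_{L^2}^{1/4}$, i.e. \eqref{f-decay}. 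The only genuinely non-mechanical step is (ii); once the factorization and the div--curl identity are set up, (i), (iii) and (iv) are routine.
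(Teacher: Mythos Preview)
Your proof is correct and self-contained. Note, however, that the paper does not actually supply its own proof of this lemma: it is quoted verbatim from \cite{HaMiNa99} (Lemma~2.4) and left unproved in the present text, so there is no in-paper argument to compare against. Your approach is in fact precisely the mechanism the authors allude to in Section~\ref{main-idea}, where they write ``we apply the inequality $\|\GG\nabla f\|_{L^2}\lesssim\|(x\cdot\nabla+2it\Delta)f\|_{L^2}+\|\Omega f\|_{L^2}$''; your div--curl computation in step~(ii) upgrades this stated inequality to an exact Plancherel identity, and the remaining steps (phase conjugation through $e^{i|x|^2/(4t)}$, the $\dot H^1\hookrightarrow L^6$ embedding, and the Gagliardo--Nirenberg interpolation with exponents $(1/4,3/4)$) are the standard route to such vector-field decay estimates and match what one finds in \cite{HaMiNa99}.
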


We then use these two estimates to give the decays of $\psi$.
\begin{lemma}[Decay of fields $\psi$]  \label{Dec_psi-Lem}
    With the notations and hypothesis in Proposition \ref{Main_Prop}, for any $t\in[0,T]$ we have
	\begin{gather}     \label{Dec_dpsi}
	\lV \nab\psi(t)\rV_{W^{1,6}} \lesssim \ep_1 \<t\>^{-1} ,\\ \label{Dec_psi}
	\lV\psi(t)\rV_{W^{\frac{5}{4},\infty}}\lesssim \ep_1\<t\>^{-3/4}.
	\end{gather}
\end{lemma}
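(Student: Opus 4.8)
The goal is to deduce the decay estimates \eqref{Dec_dpsi}--\eqref{Dec_psi} for $\psi$ from the linear decay estimates \eqref{df-decay}--\eqref{f-decay}, where $\Theta=(x\cdot\nab+2it\De,\Om)$. The plan is to apply these with $f$ taken to be $\partial^\al\psi$ for $|\al|\le N(1)$ (and, for \eqref{Dec_psi}, a fractional derivative), so that the main task reduces to controlling $\|\Theta\,\partial^\al\psi\|_{L^2}$ and $\|\partial^\al\psi\|_{L^2}$ uniformly in $t$. The latter is immediate from the bootstrap hypothesis \eqref{Main_Prop_Ass1} with $|a|=0$, which gives $\|\psi\|_{H^{N(0)}}=\|\psi\|_{H^3}\lesssim\ep_1$, so $\|\partial^\al\psi\|_{L^2}\lesssim\ep_1$ for $|\al|\le N(1)+1$. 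The first piece, $\|\Om\,\partial^\al\psi\|_{L^2}$, follows from \eqref{Main_Prop_Ass1} as well: since $\tilde\Om=\Om+A_i\in\Gamma$ and the matrices $A_i$ are bounded, $\|\Om\psi\|_{H^{N(1)}}\lesssim\|\tilde\Om\psi\|_{H^{N(1)}}+\|\psi\|_{H^{N(1)}}\lesssim\|\psi^{(a)}\|_{H^{N(1)}}+\|\psi\|_{H^{N(1)}}\lesssim\ep_1$ for the relevant $a$ with $|a|=1$.

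The substantive point is the term $\|(x\cdot\nab+2it\De)\,\partial^\al\psi\|_{L^2}$ for $|\al|\le N(1)$. Using the identity from the introduction, $x\cdot\nab+2it\De=S+2it(i\d_t+\De)$, I would split
\[
\|(x\cdot\nab+2it\De)\partial^\al\psi\|_{L^2}\lesssim \|S\,\partial^\al\psi\|_{L^2}+t\,\|(i\d_t+\De)\partial^\al\psi\|_{L^2}.
\]
The first summand is controlled by the weighted energy: $S\,\partial^\al\psi$ is one of the components of $\psi^{(a)}$ with $|a|\le 1$ (modulo commutators $[S,\partial_j]=-\partial_j$ which are harmless lower-order terms), so $\|S\,\partial^\al\psi\|_{L^2}\lesssim\|\psi^{(a)}\|_{H^{N(1)}}\lesssim\ep_1$ by \eqref{Main_Prop_Ass1}. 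For the second summand one reads off $(i\d_t+\De)\partial^\al\psi$ from the $\psi$-equation in \eqref{sys-2}: up to $\partial^\al$ it consists of the terms $-i u\cdot\nab\psi$, $-2iA\cdot\nab\psi$, $(A_{d+1}+(u+A)\cdot A)\psi$, $-i\,\d_m u\cdot\psi$, and $i\,\Im(\psi_l\bar\psi_m)\psi_l$. Each of these is at least quadratic, so the heart of the matter is to show each has $L^2$-norm of size $\ep_1^2 t^{-1}$ (after applying at most $N(1)$ derivatives), which combined with the prefactor $t$ yields the desired $O(\ep_1)$ bound; for $t\le 1$ the claimed bounds are trivial from \eqref{Main_Prop_Ass1} and Sobolev embedding, so one may assume $t>1$ and invoke the $t>1$ forms \eqref{df-decay}--\eqref{f-decay}.

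To close this, I would estimate each nonlinear term by putting one factor in $L^\infty$ or $L^6$ (using the decay it enjoys) and the rest in $L^2$ or $L^3$. For the magnetic and curvature terms this is genuinely self-improving: $A$ and $A_{d+1}$ solve the elliptic equations \eqref{A,Ad+1}, so by Sobolev and the bilinear structure $\|A\|_{L^\infty}$, $\|A\|_{L^6}$, $\|A_{d+1}\|_{\cdot}$ are bounded by $\|\psi\|$-bilinear quantities, one factor of which carries the $t^{-1}$ decay of \eqref{f-decay}; similarly $\|\nab\psi\|_{L^6}\lesssim t^{-1}\ep_1$ would come from \eqref{df-decay} once $\|\Theta\nab\psi\|_{L^2}\lesssim\ep_1$ is known — but this is exactly the quantity we are bounding, so the estimate must be run as a single bootstrap/continuity argument on the quantity $\sup_{s\le t}\|(x\cdot\nab+2is\De)\partial^\al\psi(s)\|_{L^2}$ rather than as separate steps. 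For the terms involving $u$, namely $u\cdot\nab\psi$ and $\nab u\cdot\psi$, I would use the decay of $u$ obtained earlier in Section~4 (stated there to be derived from the bootstrap assumptions) to supply $\|u\|_{L^\infty}$ or $\|\nab u\|_{L^3}$ with a suitable time factor. The main obstacle is precisely this circularity: the decay of $\nab\psi$ in $L^6$ feeds back into the $L^2$-control of $(i\d_t+\De)\psi$, which in turn is what produces the decay. Handling it cleanly requires either iterating the estimate a finite number of times (each pass improving the power of $\ep_1$ while keeping the $t$-power), or organizing all of \eqref{Dec_dpsi}, \eqref{Dec_psi} and the weighted bound $\|\Theta\partial^\al\psi\|_{L^2}\lesssim\ep_1$ into one simultaneous induction/continuity argument; I would take the latter route, using the smallness $\ep_1=\ep_0^{2/3}\ll1$ to absorb the quadratic feedback.
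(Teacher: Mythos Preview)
You have overlooked the second bootstrap hypothesis \eqref{Main_Prop_Ass2}, and this is the whole point of the lemma. The paper's proof is essentially one line: writing $\psi=e^{it\De}\Psi$ and using the commutation identity
\[
(x\cdot\nab+2it\De)\,e^{it\De}=e^{it\De}(x\cdot\nab),
\]
one gets $\|(x\cdot\nab+2it\De)\psi_k\|_{L^2}=\|x\cdot\nab\Psi_k\|_{L^2}$, which is $\lesssim\ep_1$ directly by \eqref{Main_Prop_Ass2}. Together with $\|\Om\psi\|_{H^{N(1)}}\lesssim\ep_1$ from \eqref{Main_Prop_Ass1}, this feeds into \eqref{df-decay}--\eqref{f-decay} and the lemma follows. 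No nonlinear estimate, no continuity argument, and no decay of $u$ is needed here.

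Your proposed route via $x\cdot\nab+2it\De=S+2it(i\d_t+\De)$ and an $L^2$ bound on the nonlinearity $(i\d_t+\De)\psi$ is exactly what the paper does \emph{later}, in Proposition~\ref{Prop_Psia}, to close the bootstrap by proving the improved bound \eqref{Main_Prop_result2}. At the present stage it is circular in a way you cannot easily repair: you invoke ``the decay of $u$ obtained earlier in Section~4'', but in the paper's logical order the decay of $u$ (Lemma~\ref{Dec_lem}) is proved \emph{after} this lemma and explicitly uses \eqref{Dec_psi} in its proof. Likewise the bounds on $A$, $A_{d+1}$, and $\mathcal N$ that you would need all sit downstream of Lemma~\ref{Dec_psi-Lem}. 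Your proposed simultaneous continuity argument would amount to redoing the entire bootstrap structure of Sections~4--5 inside this lemma, which defeats the purpose of isolating \eqref{Main_Prop_Ass2} as a separate hypothesis.
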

\begin{proof}
    We prove the bound \eqref{Dec_dpsi} first. By \eqref{df-decay}, we have
    \[ \|\nab  \<\nab\>\psi\|_{L^6}\les t^{-1} \big(\|(x\c\nab+2it\De)\<\nab\>\psi\|_{L^2}+\|\Om \<\nab\>\psi\|_{L^2} \big)   \]
    In view of the relation $\Psi=e^{-it\De}\psi$ and the commutator 
    \begin{equation}          \label{Comm}
    (x\cdot\nab+2it\De)e^{it\De}\Psi=e^{it\De}(x\cdot\nab)\Psi,
    \end{equation}
    we have for $t>1$ 
    \begin{align*}
        \| \nab  \<\nab\>\psi\|_{L^6}\lesssim &\  t^{-1}(\| (x\cdot \nab+2it\De)e^{it\De}\<\nab\>\Psi\|_{L^2}+\| \Om \<\nab\>\psi\|_{L^2})\\
        \lesssim &\  t^{-1} (\| x\cdot \nab \<\nab\>\Psi\|_{L^2}+\ep_1),
    \end{align*}
    Then the bound \eqref{Dec_dpsi} follows by \eqref{Main_Prop_Ass2}. 
    
    Next, we prove the bound \eqref{Dec_psi}. For low-frequency part $P_{<0}\psi$, by Sobolev embedding and \eqref{Dec_dpsi} we easily have
    \[  \|P_{<0}\psi\|_{L^\infty}\les  \|P_{<0}\psi\|_{L^6}\les \ep_1 t^{-1}. \]
    For high-frequency part $P_k \psi$ with $k>0$, using the estimate \eqref{f-decay} and commutator \eqref{Comm} to yield
    \begin{align*}
        2^{5k^+/4}\|P_k \psi\|_{L^\infty}&\les t^{-3/4} 2^{3k^+/4}\big(\|(x\c\nab+2it\De)P_k\psi\|_{L^2}+\|\Om P_k\psi\|_{L^2} \big)^{3/4}\\&\quad \cdot(2^{3k^+}\|P_k\psi\|_{L^2})^{1/4}2^{-k^+/4}\\
        &\les t^{-3/4} 2^{3k^+/4}\big(\|(x\c\nab )P_k\Psi\|_{L^2}+\|\Om P_k\psi\|_{L^2} \big)^{3/4}\|\psi\|_{H^3}^{1/4}2^{-k^+/4}.
    \end{align*}  
Since 
\begin{align*}
    &2^{k^+}\big(\|(x\c\nab )P_k\Psi\|_{L^2}+\|\Om P_k\psi\|_{L^2}\big)\\
    &\les 2^{k^+}\big(\|P_k(x\c\nab )\Psi\|_{L^2}+\|P_k\tilde\Om \psi\|_{L^2}+\|P_k \psi\|_{L^2}\big)\\
    &\les \|(x\c\nab )\Psi\|_{H^1}+\|\tilde\Om \psi\|_{H^1}+\|\psi\|_{H^1}\les \ep_1,
\end{align*}
then we have
    \begin{align*}
        2^{5k^+/4}\|P_k \psi\|_{L^\infty}&\les t^{-3/4} \ep_1^{3/4}\|\psi\|_{H^3}^{1/4}2^{-k^+/4}
        \les t^{-3/4} \ep_1 2^{-k^+/4}  .
    \end{align*}
    Sum over $k$, the estimate \eqref{Dec_psi} follows.
    Hence this completes the proof of the lemma.
\end{proof}

Next, we use \eqref{Dec_heat} and Lemma \ref{Dec_psi-Lem} to prove the decays of velocity $u$. Applying the Leray projection $\P=I_d+\nab(-\De)^{-1}\nab$ to the $u$-equation in \eqref{sys-2}, then by Duhamel's formula, the solution $u$ can be expressed as 
\begin{align}  \label{Duh-u}
    u(t)=e^{t\De}u_0-\int_0^t e^{(t-s)\De} \P\big(u\cdot\nab u+\d_j \Re(\psi\bar\psi_j)\big)\  ds.
\end{align}

\begin{lemma}       \label{Dec_lem}
	With the notations and hypothesis in Proposition \ref{Main_Prop}, for any $t\in[0,T]$, we have
	\begin{gather}            \label{Dec_u}
	\lV u\rV_{W^{5/4,\infty}}\lesssim \ep_1\<t\>^{-3/4}.
	\end{gather}
\end{lemma}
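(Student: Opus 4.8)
The plan is to derive \eqref{Dec_u} from the Duhamel representation of the velocity equation in \eqref{sys-2}, treating $u$ as a solution of a forced heat equation and using the decay of $\psi$ from Lemma \ref{Dec_psi-Lem} together with the energy bounds \eqref{Main_Prop_Ass1}. Writing $u(t)=e^{t\Delta}u_0+\int_0^t e^{(t-s)\Delta}\mathbb{P}\big(-u\cdot\nabla u-\partial_j\Re(\psi\bar\psi_j)\big)\,ds$, I would estimate $\|u(t)\|_{\dot W^{N(0)-7/4,\infty}}$ by splitting the time integral at $s=t/2$ and applying the heat kernel decay \eqref{Dec_heat}. For the linear part, $\|e^{t\Delta}u_0\|_{\dot W^{N(0)-7/4,\infty}}\lesssim t^{-3/4}\|u_0\|_{\dot W^{N(0)-1/4-...,2}}$, which is controlled by $\|u_0\|_{H^{N(0)}}\le\epsilon_0$ after checking the Sobolev exponent bookkeeping (for $t\le 1$ one uses the energy bound directly via Sobolev embedding, combined with $\langle t\rangle$).

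The bilinear terms are handled similarly. For the term $\partial_j\Re(\psi\bar\psi_j)$: on $[0,t/2]$ I would put $e^{(t-s)\Delta}\partial_j$ onto $L^\infty$ from, say, $L^{3/2}$ or $L^2$, gaining $(t-s)^{-3/4-1/2-\cdots}\sim t^{-\text{(something)}}$, and bound $\|\psi\bar\psi_j\|$ by $\|\psi\|_{L^\infty}\|\psi_j\|_{L^2}$ or $\|\psi\|_{L^6}\|\psi_j\|_{L^3}$ using the decay $\|\psi\|_{L^\infty}\lesssim\epsilon_1\langle s\rangle^{-3/4}$ and $\|\nabla\psi\|_{L^6}\lesssim\epsilon_1\langle s\rangle^{-1}$ from \eqref{Dec_psi}, \eqref{Dec_dpsi}, plus the energy bound on $\psi$; on $[t/2,t]$ I would instead keep fewer derivatives on the heat semigroup (or none on top of the $\partial_j$ already present) and use the decay of $\psi$ near time $s\sim t$ so that $\int_{t/2}^t (t-s)^{-\alpha}\langle s\rangle^{-\beta}\,ds\lesssim t^{-3/4}$ provided $\alpha<1$ and $\beta$ is large enough — here one needs $2\cdot(3/4)>3/4+\text{loss}$, which is why having two copies of $\psi$, each decaying like $\langle s\rangle^{-3/4}$ (or one like $\langle s\rangle^{-1}$), is comfortably enough. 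For the transport term $u\cdot\nabla u=\div(u\otimes u)$ (using $\div u=0$), I would use $\|u\otimes u\|\lesssim\|u\|_{L^\infty}\|u\|_{L^2}$ and bootstrap: the $L^\infty$ decay of $u$ is exactly what we are proving, so this must be set up as a continuity/bootstrap argument or absorbed — alternatively bound $u\cdot\nabla u$ using $\|\nabla u\|_{L^2([0,t]:H^{N(0)})}$ in the time-integrated (parabolic) norm, which is the cheaper route since the velocity enjoys genuine smoothing. The Leray projection $\mathbb{P}$ is bounded on all $L^p$, $1<p<\infty$, and the pressure term is eliminated by it, so it causes no trouble as long as we work with $L^p$, $p\ne 1,\infty$, before the final Sobolev embedding into $L^\infty$.

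The main obstacle is the combination of (i) keeping the derivative count consistent — $N(0)-7/4$ derivatives in $L^\infty$ corresponds, via $W^{3/4+\epsilon,6}\hookrightarrow L^\infty$ or a Gagliardo–Nirenberg interpolation, to roughly $N(0)-1$ derivatives at the $L^2$/energy level, which must fit under the $H^{N(0)}=H^3$ bound with room to spare for the extra $\partial_j$ and the heat-kernel derivative gain; and (ii) the endpoint behavior of the time integral near $s=t$, where the heat kernel factor $(t-s)^{-\alpha}$ must have $\alpha<1$. Concretely, for the $\psi$-bilinear term one wants $e^{(t-s)\Delta}$ to absorb at most $\partial_j$ plus a fractional derivative totalling less than $2$ derivatives' worth of singularity (i.e. $\alpha=(3/2)(1/p-1/\infty)+(\text{frac})/2<1$), which forces a careful choice of the intermediate Lebesgue exponent $p$; I expect this exponent chase, rather than any deep difficulty, to be where the proof's real content lies, and it should close because the quadratic decay $\langle s\rangle^{-3/2}$ available from $\|\psi\|_{L^\infty}^2$ (or $\langle s\rangle^{-7/4}$ from $\|\psi\|_{L^\infty}\|\nabla\psi\|_{L^6}$-type splits) dominates the required $\langle t\rangle^{-3/4}$ with margin, and the transport term is controlled by parabolic smoothing via \eqref{Main_Prop_Ass1} without needing the $L^\infty$ decay of $u$ as an input.
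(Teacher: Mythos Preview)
Your overall scheme --- Duhamel formula for $u$, time split at $s=t/2$, heat-kernel $L^p$--$L^q$ decay on $[0,t/2]$, and a sharper estimate near $s=t$ --- is exactly the paper's, and your treatment of the $\psi$-bilinear forcing is correct. The one genuine gap is your closing claim that on $[t/2,t]$ the transport term $u\cdot\nabla u$ can be handled by the parabolic bound $\|\nabla u\|_{L^2([0,t]:H^{N(0)})}\lesssim\ep_1$ alone, ``without needing the $L^\infty$ decay of $u$ as an input.'' That route fails a simple exponent count: the $L^2_t$ bound carries no pointwise-in-$s$ decay, so after Cauchy--Schwarz one gets at best
\[
\int_{t/2}^t (t-s)^{-\alpha}\|\nabla u(s)\|_{H^r}\,ds \;\lesssim\; \Bigl(\int_{t/2}^t (t-s)^{-2\alpha}ds\Bigr)^{1/2}\ep_1 \;\lesssim\; \ep_1\,t^{\,1/2-\alpha},
\]
which is finite only for $\alpha<1/2$ and then \emph{grows} in $t$; but sending $L^2(\R^3)\to W^{N(0)-7/4,\infty}(\R^3)$ through the heat semigroup already costs $\alpha\ge 3/4$, so no admissible choice of intermediate exponent closes the estimate with $\<t\>^{-3/4}$ decay.

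The paper therefore uses the absorption option you mentioned first: on $[t/2,t]$ it bounds $\|u^2\|_{H^{N(0)-5/4+2\de}}\lesssim\ep_1\|u\|_{L^\infty}$ (via the energy assumption, no decay on the $H^{N(0)}$ factor), arrives at
\[
\|u(t)\|_{W^{N(0)-7/4,\infty}}\lesssim \ep_1\<t\>^{-3/4}+\ep_1\sup_{s\in[t/2,t]}\|u(s)\|_{L^\infty},
\]
and absorbs the second term into the left by a continuity argument in the quantity $\sup_{s\le t}\<s\>^{3/4}\|u(s)\|_{W^{N(0)-7/4,\infty}}$. Drop the ``cheaper route'' and commit to the absorption; the rest of your outline is fine.
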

\begin{proof}
    From the formula \eqref{Duh-u}, we need to bound the right hand side of \eqref{Duh-u} in $W^{5/4,\infty}$. The first term $e^{t\De}u_0$ is estimated by \eqref{Dec_heat} and \eqref{ini-diffSys}. For the integral in \eqref{Duh-u}, we claim
    \begin{align}   \label{Dec_u-key}
        \int_0^t \|e^{(t-s)\De}\P(u\cdot \nab u+\nab(\psi^2))\|_{W^{5/4,\infty}}\ ds\lesssim \ep_1^2\<t\>^{-3/4}+\ep_1\sup_{s\in[t/2,t]}\|u(s)\|_{L^\infty}.
    \end{align}
    By \eqref{Dec_heat}, $\div u=0$ and Sobolev embedding we bound the first term by
    \begin{align*}
        \int_0^t \|e^{(t-s)\De}\P(u\cdot \nab u)\|_{W^{5/4,\infty}}\ ds\lesssim &\  \int_0^{t/2} (t-s)^{-2} \| u^2\|_{W^{5/4,1}}ds\\
        &\ +\int_{t/2}^t (t-s)^{-1+\de}\<t-s\>^{-1/4-\de}\|u^2\|_{H^{7/4+2\de}}ds\\ 
        \lesssim &\ \ep_1^2\<t\>^{-1}+\ep_1\sup_{s\in[t/2,t]}\|u(s)\|_{L^\infty}.
    \end{align*}
    Similarly, by \eqref{Dec_psi} we easily have
    \begin{align*}
        \int_0^t \|e^{(t-s)\De}\P\nab(\psi^2)\|_{W^{5/4,\infty}}ds\lesssim  &\ \ep_1^2\<t\>^{-1}+\ep_1\sup_{s\in[t/2,t]}\|\psi(s)\|_{L^\infty}\lesssim \ep_1^2\<t\>^{-3/4}.
    \end{align*}
    These give the bound \eqref{Dec_u-key}, and conclude the decay estimate \eqref{Dec_u}.
\end{proof}

We then use this decay estimate to prove the following more important decay estimate for $\nab u$.
\begin{lemma}  \label{Dec_lem_VF}
	With the notations and hypothesis in Proposition \ref{Main_Prop}. For any $t\in [0,T]$, we have
	\begin{equation}        \label{Dec_du}
	\lV \nab u\rV_{W^{1,\infty}}\lesssim \ep_1\<t\>^{-5/4}.
	\end{equation}
\end{lemma}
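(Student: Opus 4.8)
The plan is to use Duhamel's formula for $u$ together with the smoothing and decay properties of the heat semigroup \eqref{Dec_heat}, in a manner parallel to the proof of \lemref{Dec_lem} but now exploiting the extra decay already established there. Writing
\[
u(t)=e^{t\De}u_0-\int_0^t e^{(t-s)\De}\P\big(u\c\nab u+\nab(\psi\bar\psi)\big)\,ds,
\]
we apply $\nab$ and estimate in $W^{N(1),\infty}=W^{1,\infty}$. The linear term $\nab e^{t\De}u_0$ is harmless: by \eqref{Dec_heat} with $l=1$ it is bounded by $\<t\>^{-3/2-1/2}\|u_0\|_{W^{N(1),1}}$-type quantities, or more simply by splitting the frequency and using $\|u_0\|_{H^{N(0)}}\le\ep_0$, which decays faster than $\<t\>^{-5/4}$. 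So the whole point is the Duhamel integral.

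The key steps are as follows. First I would split the time integral at $s=t/2$. On $[0,t/2]$ the kernel factor $(t-s)^{-\a}$ is comparable to $\<t\>^{-\a}$, so I can afford to place many derivatives on the semigroup: using \eqref{Dec_heat} with a large $l$ and with target $L^\infty$, source $L^1$, one gets a factor $(t-s)^{-3/2-l/2}$, and then one only needs $\|u\c\nab u\|_{W^{\cdot,1}}\lesssim\|u\|_{L^2}\|\nab u\|_{L^2}\lesssim\ep_1^2$ and $\|\nab(\psi\bar\psi)\|_{W^{\cdot,1}}\lesssim\|\psi\|_{L^2}\|\nab\psi\|_{L^2}\lesssim\ep_1^2$ (using \eqref{Main_Prop_Ass1}), giving a contribution $\lesssim\ep_1^2\<t\>^{-5/4}$ after choosing $l$ appropriately — actually $\<t\>^{-3/2}$, which is even better. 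On $[t/2,t]$ the measure of the interval is $\sim t$ but the kernel singularity $(t-s)^{-\a}$ must be integrable, i.e. $\a<1$; here one wants to extract decay from the nonlinearity itself rather than from the kernel. For the transport term $u\c\nab u$ I would use \eqref{Dec_u} and \eqref{Dec_du}\,-\,type bounds already available: write $\|u\c\nab u\|_{L^\infty}\lesssim\|u\|_{L^\infty}\|\nab u\|_{L^\infty}$ — but $\|\nab u\|_{L^\infty}$ is exactly what we are bounding, so instead I would put one factor in $L^\infty$ via \eqref{Dec_u} and one factor in a higher Sobolev norm controlled by $\ep_1$, using \eqref{Dec_heat} with $q=\infty$, $p=2$ and $l=1$ to get kernel exponent $-3/4-1/2=-5/4$... which is \emph{not} integrable, so one must instead take $p=2$, a small number of derivatives, and exponent $-3/4-\de$, pairing with $\|u(s)\|_{L^\infty}\lesssim\ep_1\<s\>^{-3/4}$ from \eqref{Dec_u} and $\|\nab u(s)\|_{H^{N(0)-1}}\lesssim\ep_1$; the resulting $s$-integral is $\int_{t/2}^t(t-s)^{-3/4-\de}\<s\>^{-3/4}\,ds\lesssim\<t\>^{-3/4}\int_{t/2}^t(t-s)^{-3/4-\de}ds\lesssim\<t\>^{-1/2-\de}$, which is still short of $\<t\>^{-5/4}$. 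The fix is to use \eqref{Dec_du} self-consistently in a bootstrap: assume $\|\nab u\|_{L^\infty}\lesssim\ep_1\<s\>^{-5/4}$, then $\|u\c\nab u\|_{L^\infty}\lesssim\ep_1^2\<s\>^{-3/4-5/4}=\ep_1^2\<s\>^{-2}$, and with kernel exponent $-3/4-\de$ the $[t/2,t]$-integral is $\lesssim\ep_1^2\<t\>^{-2}\int_{t/2}^t(t-s)^{-3/4-\de}ds\lesssim\ep_1^2\<t\>^{-7/4-\de}$; similarly for the derivative-landing-on-the-semigroup variant with a couple of derivatives on $u\c\nab u$ controlled by \eqref{Dec_u}, \eqref{Dec_du} and \eqref{Main_Prop_Ass1}. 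For the magnetization term $\nab(\psi\bar\psi)$, by \eqref{Dec_psi} and \eqref{Dec_dpsi} we have $\|\nab(\psi\bar\psi)\|_{L^\infty}\lesssim\|\psi\|_{L^\infty}\|\nab\psi\|_{L^\infty}$; bounding $\|\nab\psi\|_{L^\infty}$ by interpolation between \eqref{Dec_dpsi} ($L^6$, rate $\<t\>^{-1}$) and the $H^{N(0)}$ bound, or directly via \eqref{Dec_psi} applied to $\nab\psi$ using the $W^{N(1)+1/4,\infty}$ gain, one obtains $\lesssim\ep_1^2\<s\>^{-3/4}\<s\>^{-3/4}=\ep_1^2\<s\>^{-3/2}$, and the same kernel estimate on $[t/2,t]$ closes with room to spare, while $[0,t/2]$ is handled as above.

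Assembling the pieces, the transport and magnetization contributions both decay at least like $\ep_1^2\<t\>^{-5/4}$ (indeed faster off the endpoint), and combined with the linear term this yields \eqref{Dec_du}. The main obstacle is the transport nonlinearity $u\c\nab u$ on the near-diagonal time window $[t/2,t]$: naively one loses because $\|\nab u\|_{L^\infty}$ is the very quantity being estimated and the heat kernel cannot supply the full $\<t\>^{-5/4}$ by itself; the resolution is the self-improving (continuity/bootstrap) argument sketched above, feeding the assumed $\<t\>^{-5/4}$ decay of $\nab u$ back into the quadratic term to produce $\<t\>^{-2}$ decay there, which then integrates against the (integrable) kernel $(t-s)^{-3/4-\de}$ to return better than $\<t\>^{-5/4}$. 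One must also be slightly careful that $N(0)-7/4$ and $N(1)=1$ leave enough Sobolev regularity to absorb the derivatives one moves onto the semigroup on $[0,t/2]$; since $N(0)=3$ this is comfortable.
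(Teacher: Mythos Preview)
Your overall Duhamel/split-at-$t/2$ strategy matches the paper, and your treatment of the $[0,t/2]$ window and of the $\psi$-term is essentially the same. The real point of departure is how you handle the transport term $u\cdot\nab u$ on $[t/2,t]$: you propose a self-referential continuity argument (bootstrapping on $\|\nab u\|_{W^{N(1),\infty}}$ itself), feeding the sought-for $\<t\>^{-5/4}$ decay of $\nab u$ back into the quadratic term. This can be made to work, but your kernel exponents are somewhat muddled (e.g.\ the ``$-3/4-\de$'' does not match the setup you describe, and the passage to the full $W^{N(1),\infty}$ norm is left vague), and more to the point the extra bootstrap is unnecessary. The paper sidesteps the circularity entirely by using the divergence-free condition: since $\div u=0$ one has $u\cdot\nab u=\div(u\otimes u)$, so on $[t/2,t]$ the expression $\nab(u\cdot\nab u)=\nab\div(u\otimes u)$ carries two derivatives that can be (almost) entirely placed on the heat semigroup, giving
\[
\int_{t/2}^t(t-s)^{-1+\de}\|u^2\|_{W^{N(1)+2\de,\infty}}\,ds\ \lesssim\ \int_{t/2}^t(t-s)^{-1+\de}\ep_1^2\<s\>^{-3/2}\,ds\ \lesssim\ \ep_1^2\<t\>^{-3/2+\de},
\]
using only the already-proved bound $\|u\|_{W^{N(0)-7/4,\infty}}\lesssim\ep_1\<s\>^{-3/4}$ from \eqref{Dec_u} (note $N(1)+2\de\le N(0)-7/4$ for small $\de$). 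So what your inner bootstrap buys by an extra layer of continuity argument, the paper obtains directly from the structure $\div u=0$; the paper's route is shorter and avoids the exponent bookkeeping your version would still need to be tightened up.
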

\begin{proof}
    From Duhamel's formula \eqref{Duh-u}, the term $e^{t\De}\nab u_0$ is estimated by \eqref{Dec_heat} and \eqref{ini-diffSys}. Here it remains to prove
    \begin{align}   \label{Dec_du-key}
        \int_0^t \|e^{(t-s)\De}\P\nab(u\cdot \nab u+\nab(\psi^2))\|_{W^{1,\infty}}ds\lesssim \ep_1^2\<t\>^{-5/4}.
    \end{align}
    By \eqref{Dec_heat}, $\div u=0$ and Sobolev embedding, we have
    \begin{align*}
        \int_0^t \|e^{(t-s)\De}\P\nab(u\cdot \nab u)\|_{W^{1,\infty}}ds\lesssim &\ \int_0^{t/2} (t-s)^{-5/2} \| u^2\|_{W^{1,1}}ds\\
        &+\int_{t/2}^t (t-s)^{-1+\de}\|u^2\|_{W^{1+2\de,\infty}}ds\\ 
        \lesssim &\ \ep_1^2\<t\>^{-3/2}+\<t\>^\de \sup_{s\in[t/2,t]}\|u(s)\|_{W^{1+2\de,\infty}}^2\\
        \lesssim &\ \ep_1^2\<t\>^{-3/2+\de}.
    \end{align*}
    Similar, by \eqref{Dec_psi} we easily have
    \begin{align*}
        \int_0^t \|e^{(t-s)\De}\P\nab^2(\psi^2)\|_{W^{1,\infty}}ds\lesssim  &\ \ep_1^2\<t\>^{-3/2}+\<t\>^\de\sup_{s\in[t/2,t]}\|\psi(s)\|_{W^{1+2\de,\infty}}^2\lesssim \ep_1^2\<t\>^{-3/2+\de}.
    \end{align*}
    These give the bound \eqref{Dec_du-key}, and conclude the decay estimate \eqref{Dec_du}.
\end{proof}

With the above decays of $\psi$ and $u$, we then prove the decays for $A$ \eqref{Ell-A} and $A_{d+1}$ \eqref{Ell-Ad1}.

\begin{cor}[Decays of $A$ and $A_{d+1}$]  \label{Dec_A_Cor}
    With the notations and hypothesis in Proposition \ref{Main_Prop}, for the elliptic equations \eqref{Ell-A}, \eqref{Ell-Ad1} and any $t\in[0,T]$, we have
	\begin{align}  \label{Dec_d1/2-A}
    \| |\nab|^{\frac{1}{2}} A\|_{H^3}+\sum_{|a|=1}\| |\nab|^{\frac{1}{2}}Z^aA\|_{H^1}&\lesssim \ep_1^2\<t\>^{-1/2},\\\label{Dec_dA}
    \| \nab A\|_{W^{1,\infty}}&\lesssim \ep_1^2\<t\>^{-3/2+3\de/4},\\   \label{Dec_Ad+1-L2}
    \| A_{d+1}\|_{H^3}+\sum_{|a|=1}\| Z^aA_{d+1}\|_{H^1}&\lesssim \ep_1^2\<t\>^{-1/2},\\\label{Dec_Ad+1-infty}
    \| A_{d+1}\|_{W^{1,\infty}}&\lesssim \ep_1^2\<t\>^{-5/4+3\de},
	\end{align}
 where $0<\de\ll 1$ is sufficiently small.
\end{cor}
\begin{proof}
    We start with the first estimate \eqref{Dec_d1/2-A}. By \eqref{Ell-A} and \eqref{Dec_psi} we have 
    \begin{align*}
        \| |\nab|^{1/2} A\|_{H^3}\lesssim &\ \||\nab|^{-1/2}(\psi^2)_{\leq 0}\|_{L^2}+\|\psi^2\|_{H^3}\\
        \lesssim &\ \|(\psi^2)_{\leq 0}\|_{L^{3/2}}+\|\psi\|_{H^3}\|\psi\|_{L^{\infty}}\\
        \lesssim &\ \|\psi\|_{L^2}^{4/3}\|\psi\|_{L^\infty}^{2/3}+\ep_1^2\<t\>^{-3/4}\\
        \lesssim &\ \ep_1^2\<t\>^{-1/2}.
    \end{align*}
    And similarly, for $|a|=1$ we have
    \begin{align*}
        \| |\nab|^{1/2}Z^aA\|_{H^1}\lesssim &\ \||\nab|^{-1/2}(Z^a\psi\psi)_{\leq 0}\|_{L^2}+\|Z^a\psi\psi\|_{H^1}\\
        \lesssim &\ \|(Z^a\psi\psi)_{\leq 0}\|_{L^{3/2}}+\|Z^a\psi\|_{H^1}\|\psi\|_{W^{1,\infty}}\\
        \lesssim &\ \|Z^a\psi\|_{L^2}\|\psi\|_{L^2}^{1/3}\|\psi\|_{L^\infty}^{2/3}+\ep_1^2\<t\>^{-3/4}\\
        \lesssim &\ \ep_1^2\<t\>^{-1/2}.
    \end{align*}
    Thus the bound \eqref{Dec_d1/2-A} follows.
    
    We use Sobolev embedding and \eqref{Dec_psi} to give the second estimate \eqref{Dec_dA} 
    \begin{align*}
        \| \nab A\|_{W^{1,\infty}}\lesssim &\ \| \mathcal R(\psi^2)\|_{W^{1+3\de,2/\de}}\\
        \lesssim &\ \|\psi\|_{W^{1+3\de,\infty}}\|\psi\|_{W^{1+3\de,2/\de}}\\ 
        \lesssim &\ \ep_1^2\<t\>^{-3/2+3\de/4},
    \end{align*}
where $\mathcal R=\frac{\d_x}{|\nab|}$ is the Riesz transform.

    For the third bound \eqref{Dec_Ad+1-L2}, when $|a|=0$, by \eqref{Ell-Ad1}, Sobolev embedding, \eqref{Dec_dpsi} and \eqref{Dec_psi} we have
    \begin{align*}
        \| A_{d+1}\|_{H^{3}}\lesssim &\  \| \nab^{-1}(\nab\psi \psi+(u+A)\psi^2)\|_{H^{3}}\\
        \lesssim &\ \|(\nab\psi \psi+(u+A)\psi^2)_{\leq 0}\|_{L^{6/5}}+\|(\nab\psi \psi+(u+A)\psi^2)_{> 0}\|_{H^{3-1}}\\
        \lesssim &\ \| \nab\psi\|_{L^3}\|\psi\|_{L^2}+\|(u,A)\|_{L^2}\|\psi\|_{L^3}\|\psi\|_{L^\infty}+\| \psi\|_{W^{1,\infty}}\|\psi\|_{H^{3}}\\
        & +\|(u,A)\|_{H^{3}}\|\psi\|_{L^\infty}^2+ \|\psi\|_{H^{3}}\|(u,A)\|_{L^\infty}\|\psi\|_{L^\infty}\\
        \lesssim &\ \ep_1^2\<t\>^{-1/2}+\ep_1^3\<t\>^{-1}+\ep_1^2\<t\>^{-3/4}+\ep_1^3\<t\>^{-3/2}\lesssim \ep_1^2\<t\>^{-1/2}.
    \end{align*}
    When $|a|=1$, from the equation \eqref{Ell-Ad1} we have
    \begin{align*}
        \sum_{|a|=1}\| Z^aA_{d+1}\|_{H^{1}}\lesssim &\  \| \nab^{-1}(\sum_{|b+c|=1}\nab Z^b\psi Z^c\psi+\sum_{|b+c+e|=1}(Z^bu+Z^bA)Z^c\psi Z^e\psi)\|_{H^{1}}.
    \end{align*}
    By \eqref{Dec_dpsi} and \eqref{Dec_psi}, the first term in the right-hand side can be bounded by
    \begin{align*}
        &\sum_{|a|=1}\| \nab^{-1}(\nab Z^a\psi \psi+\nab\psi Z^a\psi)\|_{H^{1}}\\
        \lesssim &\ \sum_{|a|=1}\| \mathcal R(Z^a\psi\psi)+ \nab^{-1}(\nab\psi Z^a\psi)\|_{H^{1}}\\
        \lesssim &\ \sum_{|a|=1}\Big(\|Z^a\psi\|_{H^{1}}\|\psi\|_{W^{1,\infty}}+ \|(\nab\psi Z^a\psi)_{\leq 0}\|_{L^{6/5}}+\|(\nab\psi Z^a\psi)\|_{H^{1-1}}\Big)\\
        \lesssim &\ \ep_1^2\<t\>^{-3/4}+\sum_{|a|=1}\big(\|\nab\psi\|_{L^3}\|Z^a\psi\|_{L^2}+\|\nab\psi\|_{W^{1,6}}\|Z^a\psi\|_{H^{1}}\big)\\
        \lesssim &\ \ep_1^2\<t\>^{-3/4}+\ep_1^2\<t\>^{-1/2}+\ep_1^2\<t\>^{-1}\lesssim \ep_1^2\<t\>^{-1/2}.
    \end{align*}
    We use Littlewood-Paley decomposition to write the second term as 
    \begin{align}\label{SecTerm}
        &\| \nab^{-1}\sum_{|b+c+e|=1}(Z^bu+Z^bA)Z^c\psi Z^e\psi\|_{H^{1}}\\\nonumber
        \lesssim &\ \|(\sum_{|b+c+e|=1}(Z^bu+Z^bA)Z^c\psi Z^e\psi)_{\leq 0}\|_{L^{6/5}}\\\nonumber
        &\ +\|(\sum_{|b+c+e|=1}(Z^bu+Z^bA)Z^c\psi Z^e\psi)_{> 0}\|_{L^2},
    \end{align}
    by \eqref{Dec_psi}, \eqref{Dec_u} and \eqref{A_m-Eq}, which can be bounded by 
    \begin{align*}
        \eqref{SecTerm}\lesssim &\ \sum_{|a|=1}\big(\|Z^a(u,A)\|_{L^2}\|\psi\|_{L^3}\|\psi\|_{L^\infty}+\|(u,A)\|_{L^3}\|Z^a\psi\|_{L^2}\|\psi\|_{L^\infty}\\
        &\ +\|Z^a(u,A)\|_{H^{1}}\|\psi\|_{W^{1,\infty}}^2+\|(u,A)\|_{W^{1,\infty}}\|\psi\|_{W^{1,\infty}}\|Z^a\psi\|_{H^{1}}\big)\\
        \lesssim &\ \ep_1^3\<t\>^{-1}+\ep_1^3\<t\>^{-3/2}\lesssim \ep_1^3\<t\>^{-1}.
    \end{align*}
    
    Finally, we prove the last bound \eqref{Dec_Ad+1-infty}. By \eqref{Dec_dpsi} and \eqref{Dec_psi} we have 
    \begin{align*}
        \| A_{d+1}\|_{W^{1,\infty}}\lesssim &\  \| \nab^{-1}\mathcal R(\nab\psi \psi+(u+A)\psi^2)\|_{W^{1,\infty}}\\
        \lesssim &\ \| (\nab\psi \psi+(u+A)\psi^2)_{\leq 0}\|_{L^{\frac{3}{1+3\de}}}\\
        &\ +\| (\nab\psi \psi+(u+A)\psi^2)_{>0}\|_{W^{1,\frac{3}{1-3\de}}}\\
        \lesssim &\ (\|\nab\psi\|_{L^{\frac{3}{1+3\de}}} +\|(u,A)\|_{L^{\frac{3}{1+3\de}}}\|\psi\|_{L^\infty})\|\psi\|_{L^\infty}\\
        &\ +(\|\nab\psi\|_{W^{1,\frac{3}{1-3\de}}} +\|(u,A)\|_{W^{1,\frac{3}{1-3\de}}}\|\psi\|_{W^{1,\infty}})\|\psi\|_{W^{1,\infty}}\\
        \lesssim &\ \ep_1^2\<t\>^{-5/4+3\de}+\ep_1^3\<t\>^{-3/2}
        \lesssim \  \ep_1^2\<t\>^{-5/4+3\de}.
    \end{align*}
    This completes the proof of the lemma.
\end{proof}

From the above decays for $\psi$ and $A$, the Coulomb frame $v,\ w$ near the fixed vectors $v_\infty$ and $w_\infty$ also admit some decay properties in $W^{1,\infty}$. The following corollary will help us in the proof of \eqref{thm-scatter}.
\begin{cor}
With the notations and hypothesis in Proposition \ref{Main_Prop}, for the Coulomb frame $v,w$, we denote $v_\infty=\lim_{|x|\rightarrow\infty}v(x)$ and $w_\infty=\lim_{|x|\rightarrow\infty}w(x)$. Then we have
    \begin{align}  \label{v-cong}
        \|v-v_\infty\|_{W^{1,\infty}}+\|w-w_\infty\|_{W^{1,\infty}}\les \ep_1\<t\>^{-\frac{1}{6}+\de}.
    \end{align}
\end{cor}
\begin{proof}
    By \eqref{phi-v-w} and $|\phi|=|v|=|w|=1$, we easily have
    \begin{align} \label{dv-inf}
        \|\nab(v-v_\infty)\|_{W^{1,\infty}}+\|\nab(w-w_\infty)\|_{L^{\infty}}&=\|\nab v\|_{W^{1,\infty}}+\|\nab w\|_{L^{\infty}}\\\nonumber
        &\les \|(\phi,w,v)\|_{L^\infty}\|(\psi,A)\|_{L^\infty}\\\nonumber
        &\les \|(\psi,A)\|_{L^\infty}.
    \end{align}
    Then from \eqref{Dec_psi} and \eqref{Dec_d1/2-A}, we bound by
    \begin{align}  \label{dv-inf-fin}
        \eqref{dv-inf}&\les \|\psi\|_{L^\infty}+\||\nab|^{1/2}A\|_{H^3}\les \ep_1\<t\>^{-3/4}+\ep_1\<t\>^{-1/2}\les \ep_1\<t\>^{-1/2}.
    \end{align}

    Next, we bound the $v-v_\infty$ and $w-w_\infty$ in $L^\infty$. Using Bernstein's inequality and interpolation inequality, for $\de_1>0$ small we have
    \begin{align*}
    \|v-v_\infty\|_{L^\infty}&\les \sum_{j\leq 0} 2^{j\frac{d}{d-\de_1}}\|P_j(v-v_\infty)\|_{L^{d-\de_1}}+\sum_{j> 0} 2^{j\frac{d}{d+\de_1}}\|P_j(v-v_\infty)\|_{L^{d+\de_1}}\\
    &\les \sum_{j\leq 0} 2^{j\frac{\de_1}{d-\de_1}}\|P_j\nab(v-v_\infty)\|_{L^{d-\de_1}}+\sum_{j> 0} 2^{-j\frac{\de_1}{d+\de_1}}\|P_j\nab(v-v_\infty)\|_{L^{d+\de_1}}\\
    &\les \|\nab(v-v_\infty)\|_{L^2}^{\frac{2}{d-\de_1}}\|\nab(v-v_\infty)\|_{L^\infty}^{1-\frac{2}{d-\de_1}}\\
    &\quad +\|\nab(v-v_\infty)\|_{L^2}^{\frac{2}{d+\de_1}}\|\nab(v-v_\infty)\|_{L^\infty}^{1-\frac{2}{d+\de_1}}.
    \end{align*}
    By \eqref{bd-vw} and \eqref{dv-inf-fin}, we can bound the above by
    \begin{align} \nonumber
        \|v-v_\infty\|_{L^\infty}&\les \ep_1^{\frac{2}{d-\de_1}} (\ep_1\<t\>^{-1/2})^{1-\frac{2}{d-\de_1}}+\ep_1^{\frac{2}{d+\de}} (\ep_1\<t\>^{-1/2})^{1-\frac{2}{d+\de_1}}\\ \label{dv}
        &\les \ep_1\<t\>^{-\frac{1}{6}+\frac{\de_1}{9-3\de_1}}\les \ep_1\<t\>^{-\frac{1}{6}+\de},
    \end{align}
    where we choose $\frac{\de_1}{9-3\de_1}=\de$. In a same line, we also have
    \begin{align} \label{dw}
        \|w-w_\infty\|_{L^\infty}\les \ep_1\<t\>^{-\frac{1}{6}+\de}.
    \end{align}
    Hence, collecting the bounds \eqref{dv-inf-fin}, \eqref{dv} and \eqref{dw}, the estimate \eqref{v-cong} follows.
\end{proof}

\bigskip

We then prove the decay estimates for the nonlinearities
\begin{align*}
    \mathcal N^{(a)}:&=\sum_{|b+c|\leq |a|}(Z^bA_{d+1} Z^c\psi+\d_m Z^b u\cdot Z^c\psi)\\
    &\quad +\sum_{|b+c+e|\leq |a|} [(Z^bA+Z^bu)\cdot  Z^cA Z^e\psi
    +Z^b\psi Z^c\psi Z^e\psi].
\end{align*}

\begin{cor}[Decays of $\mathcal N^{(a)}$]  \label{Dec_NonL-Cor}
    With the notations and hypothesis in Proposition \ref{Main_Prop}, for any $t\in[0,T]$ we have
\begin{gather}     \label{Dec_NonL}
    \| \mathcal N^{(0)}(t)\|_{H^3}\lesssim \ep_1^2\<t\>^{-5/4+3\de}+\ep_1\<t\>^{-3/4}\|\nab u\|_{H^3},\\ \label{Dec_NonL1}
    \| \mathcal N^{(1)}(t)\|_{H^1}\lesssim \ep_1^2\<t\>^{-5/4+3\de}+\ep_1\<t\>^{-3/4}\sum_{|a|\leq 1}\|\nab Z^au\|_{H^1}.
\end{gather}
\end{cor}
\begin{proof}
    By \eqref{Dec_u}, \eqref{Dec_du}, \eqref{Dec_psi} and Lemma \ref{Dec_A_Cor} we bound $\NN^{(0)}$ by
    \begin{align*}
        \| \NN^{(0)}\|_{H^{3}}\lesssim &\  (\| (A_{d+1},\nab u)\|_{H^{3}}+\|(u,A)\|_{H^{3}} \|(u,A)\|_{L^\infty}) \|\psi\|_{L^\infty}\\
        &\ +(\| (A_{d+1},\nab u)\|_{L^\infty}+\|(u,A)\|_{L^\infty}^2 ) \|\psi\|_{H^{3}}+\|\psi\|_{H^{3}}\|\psi\|_{L^\infty}^2\\
        \lesssim &\  (\ep_1^2\<t\>^{-1/2+3\de}+\| \nab u\|_{H^{3}})\ep_1\<t\>^{-3/4}+\ep_1^2\<t\>^{-5/4+3\de}\\
        \lesssim & \ \ep_1^2\<t\>^{-5/4+3\de}+\ep_1\<t\>^{-3/4}\| \nab u\|_{H^{3}}.
    \end{align*}
    We could also bound $\NN^{(a)}$ for $|a|=1$ by
    \begin{align*}
        &\| \NN^{(a)}\|_{H^{1}}\\
        \lesssim &\  \sum_{|a|=1}(\| (Z^aA_{d+1},\nab Z^au)\|_{H^{1}}+\|(Z^au+ Z^aA)\|_{H^{1}} \|(u+A)\|_{W^{1,\infty}}) \|\psi\|_{W^{1,\infty}}\\
        &\ +\sum_{|a|=1}(\| (A_{d+1},\nab u)\|_{W^{1,\infty}}+\|(u+A)\|_{W^{1,\infty}}^2 ) \|Z^a\psi\|_{H^{1}}\\
        &\ +\sum_{|a|=1}\|Z^a\psi\|_{H^{1}}\|\psi\|_{W^{1,\infty}}^2\\
        \lesssim &\  (\ep_1^2\<t\>^{-1/2+3\de}+\sum_{|a|=1}\| \nab Z^au\|_{H^{1}})\ep_1\<t\>^{-3/4}+\ep_1^2\<t\>^{-5/4+3\de}\\
        \lesssim &\  \ep_1^2\<t\>^{-5/4+3\de}+\ep_1\<t\>^{-3/4}\sum_{|a|=1}\| \nab Z^au\|_{H^{1}}.
    \end{align*}
    This concludes the proof of the Corollary.
\end{proof}

Here we also need to bound the $\nab^2 u$ and $\FF^{-1} (|\xi|\nab_\xi \widehat{u}(\xi))$ in $H^{1}$.
\begin{lemma}  \label{Dec_lem_d2u}
	With the notations and hypothesis in Proposition \ref{Main_Prop}. For any $t\in [0,T]$, we have
	\begin{align}        \label{Dec_d2u}
	&\lV \nab^2 u\rV_{H^{1}}\lesssim \ep_0\<t\>^{-1},\\\label{xidxi-u}
	&\|\FF^{-1} (|\xi|\nab_\xi \widehat{u}(\xi))\|_{H^{1}}\lesssim \ep_1.
	\end{align}
\end{lemma}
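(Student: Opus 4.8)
The plan is to treat the two bounds separately, both via Duhamel's formula for the $u$-equation in \eqref{sys-2}, namely
\[
u(t)=e^{t\De}u_0-\int_0^t e^{(t-s)\De}\P\big(u\cdot\nab u+\nab(\psi\bar\psi_j)\big)\,ds,
\]
combined with the heat decay \eqref{Dec_heat} and the decay estimates already established in Lemmas \ref{Dec_psi-Lem}--\ref{Dec_lem_VF} and Corollary \ref{Dec_A_Cor}. For \eqref{Dec_d2u}, I would apply $\nab^2$ to Duhamel's formula, put the output in $H^{N(1)}=H^1$, and split the time integral at $s=t/2$. On $[0,t/2]$ the heat kernel gains $(t-s)^{-1}\<t-s\>^{-1/4}$ or more (using $\div u=0$ to move a derivative onto the kernel as in Lemma \ref{Dec_lem}) acting on $\|u^2\|_{W^{N(1),1}}+\|\psi^2\|_{W^{N(1),1}}\lesssim\ep_1^2$, which integrates to $\ep_1^2\<t\>^{-1}$; this is where the gain to $\ep_0$ rather than $\ep_1^2$ must come from, so in fact I expect to bound the low-frequency/first-half contribution using the energy bound $\|u\|_{H^{N(0)}}\lesssim\ep_0$ together with the $L^\infty$ decay, producing an $\ep_0\ep_1$ factor, which is $\lesssim\ep_0$ since $\ep_1=\ep_0^{2/3}$. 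On $[t/2,t]$ I would keep two derivatives on the solution and only $\de$ derivatives on the kernel: the integrand is controlled by $(t-s)^{-1/2-\de}$ (for $\nabla^2(u\cdot\nabla u)$, using $\div u=0$) times $\|\nab^2 u\|_{H^{N(1)}}\|u\|_{W^{\ast,\infty}}+\|\nab^2 u\|_{H^{N(1)}}\|\nabla u\|_{W^{N(1),\infty}}$ and analogous $\psi$ terms; invoking \eqref{Dec_u}, \eqref{Dec_du}, \eqref{Dec_dpsi}, \eqref{Dec_psi} gives decay $\<s\>^{-5/4}$, so after integration we again obtain $\ep_1\cdot\ep_0\<t\>^{-1}$ up to acceptable $\<t\>^{\de}$ losses that are absorbed since the extra power of $\<t\>^{-1/4}$ available from the decay estimates beats $\<t\>^\de$.

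For the weighted bound \eqref{xidxi-u}, I would use the commutation identity behind \eqref{xidxif}, i.e. $\FF^{-1}(|\xi|\nab_\xi\widehat{u})$ is controlled by $\|x\cdot\nab u\|_{H^{N(1)}}+\|\Om u\|_{H^{N(1)}}+\|u\|_{H^{N(1)}}$. The last two terms are already $\lesssim\ep_1$ by the bootstrap hypothesis \eqref{Main_Prop_Ass1} (since $\Om u$ appears among the $Z^a u$ with $|a|=1$, and $x\cdot\nabla$ can be rewritten via the scaling field $S=2t\d_t+x\cdot\nabla$ using the $u$-equation $\d_t u=\De u-\P(u\cdot\nabla u+\nabla(\psi^2))$). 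Thus $x\cdot\nabla u = S u - 2t\,\d_t u = Su - 2t\De u+2t\,\P(u\cdot\nabla u+\nabla(\psi^2))$; the term $Su$ is controlled by \eqref{Main_Prop_Ass1}, the term $2t\De u = 2t\,\nabla^2 u$ is bounded in $H^{N(1)}$ by \eqref{Dec_d2u} which gives exactly $t\cdot\ep_0\<t\>^{-1}\lesssim\ep_0$, and the quadratic term is handled by $\<t\>\big(\|u\|_{W^{\ast,\infty}}\|\nabla u\|_{H^{N(1)}}+\|\psi\|_{W^{\ast,\infty}}\|\nabla\psi\|_{H^{N(1)}}\big)\lesssim\<t\>\cdot\ep_1\<t\>^{-3/4}\cdot\ep_1\lesssim\ep_1$.

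The main obstacle I anticipate is the $[t/2,t]$ portion of \eqref{Dec_d2u}: two derivatives on the nonlinearity in $H^1$ is borderline in three dimensions, and one must carefully exploit $\div u=0$ to relocate one derivative onto the heat semigroup (getting $(t-s)^{-1/2-\de}$ instead of $(t-s)^{-1-\de}$, which would be non-integrable at $s=t$) while still keeping enough decay in $s$ from the pointwise estimates \eqref{Dec_u}, \eqref{Dec_du}. A secondary subtlety is bookkeeping the $\<t\>^{\de}$ losses so that the final exponent is exactly $-1$ and not $-1+\de$; this is achieved because \eqref{Dec_du} carries decay $\<t\>^{-5/4}$, leaving a surplus $\<t\>^{-1/4}$ to absorb any $\<t\>^{\de}$ for $\de$ small. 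Once both pieces are in place, \eqref{Dec_d2u} feeds directly into \eqref{xidxi-u} as described, completing the proof.
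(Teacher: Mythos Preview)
Your overall strategy for both bounds is the same as the paper's (Duhamel plus heat decay for \eqref{Dec_d2u}, then \eqref{xidxif} together with the equation for $\d_t u$ for \eqref{xidxi-u}), but there are two genuine gaps.

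\textbf{The $\psi$ contribution on $[t/2,t]$ in \eqref{Dec_d2u}.} You treat $\nab^3(\psi^2)$ as ``analogous'' to the convective term and claim $\<s\>^{-5/4}$ decay directly from \eqref{Dec_dpsi}--\eqref{Dec_psi}. This does not follow by a naive Leibniz estimate: after putting one (or $2-\de$) derivatives on the heat kernel, the top-order piece of $\|\nab\psi\cdot\psi\|_{H^{N(1)+\de}}$ contains $\|\nab^{2+\de}\psi\cdot\psi\|_{L^2}$, and the only $L^2$ control on $\nab^{2+\de}\psi$ is $\|\psi\|_{H^{N(0)}}\lesssim\ep_1$ with no time decay; pairing with $\|\psi\|_{L^\infty}\lesssim\ep_1\<s\>^{-3/4}$ yields only $\ep_1^2\<s\>^{-3/4}$, which after integration misses $\<t\>^{-1}$. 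This, not the $u\cdot\nab u$ term you flag, is the delicate part of the lemma. The paper resolves it by a Littlewood--Paley decomposition of $\nab\psi\cdot\psi$, interpolating $\|\nab\psi_k\|_{L^{6/(1+6\de)}}\lesssim\|\nab\psi_k\|_{L^6}^{1-3\de}\|\nab\psi_k\|_{L^2}^{3\de}$ so that the extra $\de$ regularity is paid for by the spare $N(0)-N(1)-1$ derivatives carried by the $L^2$ factor, recovering $\<s\>^{-5/4+3\de/2}$.

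\textbf{The quadratic term in \eqref{xidxi-u}.} Your final inequality is arithmetically wrong: $\<t\>\cdot\ep_1\<t\>^{-3/4}\cdot\ep_1=\ep_1^2\<t\>^{1/4}$, which is unbounded in $t$. The H\"older split $L^\infty\times H^{N(1)}$ cannot give better than $\<t\>^{-3/4}$ for $\|u\nab u\|_{H^{N(1)}}$ or $\|\psi\nab\psi\|_{H^{N(1)}}$. The paper instead uses an $L^3\times L^6$ split, e.g.\ $\|\psi\|_{W^{N(1),3}}\|\nab\psi\|_{W^{N(1),6}}\lesssim\ep_1\<t\>^{-1/4}\cdot\ep_1\<t\>^{-1}=\ep_1^2\<t\>^{-5/4}$, and similarly for $u\nab u$ (using the freshly proved \eqref{Dec_d2u} to get decay of $\|\nab u\|_{H^{N(1)}}$). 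This yields $t\|\d_t u\|_{H^{N(1)}}\lesssim\ep_1$ as required.
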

\begin{proof}
    \emph{(i) We prove the first bound \eqref{Dec_d2u}}. By Duhamel's formula \eqref{Duh-u}, \eqref{Dec_heat} and \eqref{ini-diffSys}, it suffices to prove
    \begin{align}  \label{Dec_d2u-key}
        \int_0^t \| e^{(t-s)\De}\nab^2\P (u\c \nab u+\nab(\psi^2))\|_{H^{1}}ds\lesssim \ep_1^2\<t\>^{-5/4+3\de}+\ep_1\sup_{s\in[t/2,t]}\|\nab^2 u\|_{H^{1}}.
    \end{align}
    By \eqref{Dec_heat} and \eqref{Main_Prop_Ass1} we bound the integral $\int_0^{t/2}$ by
    \begin{align*}
        &\int_0^{t/2} \| e^{(t-s)\De}\nab^2\P (u\c \nab u+\nab(\psi^2))\|_{H^{1}}ds\\
        \lesssim &\ \int_0^{t/2} (t-s)^{-9/4}(\| u\|_{H^{1}}^2+\| \psi\|_{H^{1}}^2)ds
        \lesssim\ \ep_1^2 \<t\>^{-5/4}.
    \end{align*}
    We use \eqref{Dec_heat}, \eqref{Dec_u}, \eqref{Dec_du} and H\"older inequality to bound the integral with respect to $u\cdot\nab u$ by
    \begin{align*}
        &\int_{t/2}^t \| e^{(t-s)\De}\nab^2\P (u\c \nab u)\|_{H^{1}}ds\\
        \lesssim &\ \int_{t/2}^t (t-s)^{-1/2}(\|\nab u\|_{H^{1}}\|\nab u\|_{L^\infty}+\| u\|_{W^{1,\infty}}\|\nab^2 u\|_{H^{1}})ds\\
        \lesssim &\ \int_{t/2}^t (t-s)^{-1/2}(\ep_1^{3/2}\<s\>^{-5/4}\|\nab^2 u(s)\|_{H^{1}}^{1/2}+\ep_1\<s\>^{-3/4}\|\nab^2 u(s)\|_{H^{1}})ds\\
        \lesssim &\ \ep_1^2\<t\>^{-3/2}+\ep_1\sup_{s\in[t/2,t]}\|\nab^2 u\|_{H^{1}}
    \end{align*}
    For the integral with respect to $\nab(\psi^2)$, we also have
    \begin{align}   \label{nabpsi2}
        &\int_{t/2}^t \| e^{(t-s)\De}\nab^3\P (\psi^2)\|_{H^{1}}ds
        \lesssim \ \int_{t/2}^t (t-s)^{-1+\de/2}\| \nab^{\de}(\nab\psi \psi)(s)\|_{H^{1}}ds,
    \end{align}
    Using Littlewood-Paley decomposition, the above integrand $\| \nab^{\de}(\nab\psi \psi)(s)\|_{H^{1}}$ is decomposed into high-frequency part and low-frequency part. For the high-frequency part, by Littlewood-Paley dichotomy and \eqref{Dec_dpsi} we have
    \begin{align}  \nonumber
    	2^{(1+\de)k^+}\|P_k(\nab\psi \psi)\|_{L^2}\lesssim &\  2^{(1+\de)k^+}\Big(\|\nab\psi_k\|_{L^{\frac{6}{1+6\de}}}\| \psi_{<k}\|_{L^{\frac{3}{1-3\de}}}\\\nonumber
    	&\ +\|\nab\psi_{<k}\|_{L^6}\| \psi_{k}\|_{L^3}+\sum_{k_1>k}\|\nab\psi_{k_1}\|_{L^6}\| \psi_{k_1}\|_{L^3}\Big)\\  \label{psi2}
    	\lesssim &\ 2^{(1+\de)k^+}\Big(\|\nab\psi_k\|_{L^6}^{1-3\de}\|\nab\psi_k\|_{L^2}^{3\de}\| \psi\|_{L^\infty}^{\frac{1+6\de}{3}}\|\psi\|_{L^2}^{\frac{2-6\de}{3}}\\\nonumber
    	&\ +\ep_1\<s\>^{-1}\| \psi_{k}\|_{L^\infty}^{1/3}\| \psi_{k}\|_{L^2}^{2/3}+\sum_{k_1>k}\ep_1\<s\>^{-1}\| \psi_{k_1}\|_{L^\infty}^{1/3}\| \psi_{k_1}\|_{L^2}^{2/3}\Big),
    \end{align}
    using \eqref{Dec_dpsi} and \eqref{Dec_psi} again, which is further bounded by
    \begin{align*}
        \eqref{psi2}\lesssim &\ 2^{(1+\de)k^+}(2^{-(1-3\de)k-6\de k}\ep_1^2 \<s\>^{-5/4+3\de/2}\\
        &\ +2^{-7k/3}\ep_1^2\<s\>^{-5/4}+\sum_{k_1>k}2^{-7k_1/3}\ep_1^2\<s\>^{-5/4} )\\
        \lesssim &\ 2^{-2\de k^+}\ep_1^2\<s\>^{-5/4+3\de/2}+ 2^{(-\frac{4}{3}+\de)k^+}\ep_1^2\<s\>^{-5/4}\\
        &\ + \sum_{k_1>k} 2^{(1+\de)(k^+-k_1)}2^{(-\frac{4}{3}+\de)k_1^+}\ep_1^2\<s\>^{-5/4}.
    \end{align*}
    This gives 
    \begin{align*}
        \|P_{\geq 0}(\nab\psi \psi)\|_{H^{1+\de}}\lesssim\  \ep_1^2\<s\>^{-5/4+3\de/2}.
    \end{align*}
    For the low-frequency part, we easily have
    \begin{align*}
        \|P_{< 0}(\nab\psi \psi)\|_{H^{1+\de}}\lesssim\ \| \nab\psi\|_{L^6}\|\psi\|_{L^3}\lesssim \ep_1^2\<s\>^{-5/4}.
    \end{align*}
    From these two estimates and \eqref{nabpsi2}, we obtain
    \begin{align*}
        \int_{t/2}^t \| e^{(t-s)\De}\nab^3\P (\psi^2)\|_{H^{1}}ds
        \lesssim \ \ep_1^2\<t\>^{-5/4+2\de}.
    \end{align*}
    This concludes the bound \eqref{Dec_d2u-key}, and hence gives the decay estimate \eqref{Dec_d2u}.

    \medskip 
    \emph{(ii) We prove the second bound \eqref{xidxi-u}}. By $u$-equation and \eqref{Dec_d2u} we have
    \begin{align*}
        \|\d_t u\|_{H^{1}}\lesssim&  \ \|\nab^2 u\|_{H^{1}}+\| u\nab u+\psi\nab\psi\|_{H^{1}}\\
        \lesssim & \ \ep_1\<t\>^{-1}+\| u\|_{W^{1,\infty}}\|\nab u\|_{H^{1}}+\| \psi\|_{W^{1,3}}\| \nab\psi\|_{W^{1,6}}\\
        \lesssim &\ \ep_1\<t\>^{-1}+\ep_1^2\<t\>^{-5/4}\lesssim \ \ep_1\<t\>^{-1}.
    \end{align*}
    This combined with \eqref{xidxif} and \eqref{Main_Prop_Ass1} to yields
    \begin{align*}
        \|\FF^{-1} (|\xi|\nab_\xi \widehat{u}(\xi))\|_{H^{1}}\lesssim \ \sum_{|a|= 1}\|  Z^a u\|_{H^{1}}+t\|\d_t u\|_{H^{1}}+\|u\|_{H^{1}}
        \lesssim  \ \ep_1.
    \end{align*}
    We complete the proof of the lemma.
\end{proof}

Finally, We state the following useful lemma.
\begin{lemma}
    With the hypothesis in Proposition \ref{Main_Prop}, for any $t\in[0,T]$, we have the estimate for profile $\Psi=e^{-it\De}\psi$ 
    \begin{equation} \label{xidxi-psi}
        (\sum_k 2^{2k^+}\| |\xi|\nab_\xi \widehat{\Psi_k}\|_{L^2}^2)^{1/2}\lesssim \ep_1.
    \end{equation}
\end{lemma}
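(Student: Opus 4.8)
The quantity $(\sum_k 2^{2N(1)k^+}\||\xi|\nab_\xi\widehat{\Psi_k}\|_{L^2}^2)^{1/2}$ is, up to Littlewood--Paley square-function equivalence, comparable to $\|\FF^{-1}(|\xi|\nab_\xi\widehat{\Psi})\|_{H^{N(1)}}$, so the plan is to estimate this last norm. The tool is the commutator identity $|\xi|\nab_\xi = \FF\big((x\cdot\nab_x+\tfrac{d}{2}\,\mathrm{rescaling})\big)\FF^{-1}$ repackaged as Lemma~\eqref{xidxif}: for each frequency piece (or for $\Psi$ directly),
\[
\|\FF^{-1}(|\xi|\nab_\xi\widehat{\Psi})\|_{H^{N(1)}}\lesssim \|x\cdot\nab_x\Psi\|_{H^{N(1)}}+\|\Om\Psi\|_{H^{N(1)}}+\|\Psi\|_{H^{N(1)}}.
\]
Since $\Psi=e^{-it\De}\psi$ is a unitary (in $L^2$-based spaces) image of $\psi$, the last term is $\|\psi\|_{H^{N(1)}}\lesssim\ep_1$ by \eqref{Main_Prop_Ass1}, and $\|\Om\Psi\|_{H^{N(1)}}=\|e^{-it\De}\Om\psi\|_{H^{N(1)}}$ because $\Om$ commutes with $e^{-it\De}$; writing $\Om=\tilde\Om-A$ we bound $\|\Om\psi\|_{H^{N(1)}}\lesssim\|\tilde\Om\psi\|_{H^{N(1)}}+\|A\psi\|_{H^{N(1)}}\lesssim\|\psi^{(1)}\|_{H^{N(1)}}+\|A\|_{H^{N(0)}}\|\psi\|_{H^{N(0)}}\lesssim\ep_1$, using \eqref{Main_Prop_Ass1} and \eqref{Dec_d1/2-A}.

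The substantive term is $\|x\cdot\nab_x\Psi\|_{H^{N(1)}}$. Here I would use $x\cdot\nab_x\Psi = e^{-it\De}(x\cdot\nab_x+2it\De)\psi$ and then the scaling identity $x\cdot\nab_x+2it\De = S + 2it(i\d_t+\De)$, where $S=2t\d_t+x\cdot\nab_x$. Thus
\[
\|x\cdot\nab_x\Psi\|_{H^{N(1)}}\lesssim \|S\psi\|_{H^{N(1)}}+t\,\|(i\d_t+\De)\psi\|_{H^{N(1)}}.
\]
The term $\|S\psi\|_{H^{N(1)}}$: note $S\psi$ is essentially $\psi^{(1)}$ for the scaling vector field component, so $\|S\psi\|_{H^{N(1)}}\lesssim\|\psi^{(1)}\|_{H^{N(1)}}\lesssim\ep_1$ by \eqref{Main_Prop_Ass1} (modulo the harmless shifts $(S\pm c)$ appearing in \eqref{sys-VF}, which cost only lower-order copies of $\psi$). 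The remaining term, $t\,\|(i\d_t+\De)\psi\|_{H^{N(1)}}$, is where the work lies: by the $\psi$-equation in \eqref{sys-2},
\[
(i\d_t+\De)\psi = -iu\cdot\nab\psi -2iA\cdot\nab\psi + \NN,
\]
so one needs $t$ times the $H^{N(1)}$-norm of each piece to be $\lesssim\ep_1$. The cubic-type and connection terms are controlled by Corollary~\eqref{Dec_NonL} together with \eqref{Dec_d1/2-A}: $t\|\NN\|_{H^{N(1)}}\lesssim t(\ep_1^2\<t\>^{-5/4+3\de}+\ep_1\<t\>^{-3/4}\|\nab u\|_{H^{N(1)}})$, and after an interpolation/Sobolev step using \eqref{Dec_d2u} to absorb $\|\nab u\|_{H^{N(1)}}$, this is $\lesssim\ep_1$. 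For the transport term $t\|u\cdot\nab\psi\|_{H^{N(1)}}$ I would split: $t\|u\|_{W^{N(1),\infty}}\|\nab\psi\|_{H^{N(1)}}+t\|\nab\psi\|_{W^{N(1),6}}\|u\|_{W^{N(1),3}}$, using \eqref{Dec_u} ($\lesssim\<t\>^{-3/4}$), \eqref{Dec_dpsi} ($\lesssim\<t\>^{-1}$) and Sobolev; the worst contribution is $t\cdot\<t\>^{-3/4}\cdot\ep_1$, which is \emph{not} integrable-small by itself, so here I expect the real obstacle: one must instead keep the factor $\|\nab\psi\|_{H^{N(1)}}$ bounded by $\ep_1$ and use that the product with $\|u\|_{W^{N(1),\infty}}\lesssim\ep_1\<t\>^{-3/4}$ and the fact that $N(1)=1$ while $\psi$ carries $N(0)=3$ derivatives of room, so in fact $\|\nab\psi\|_{H^{N(1)}}$ can be upgraded to a decaying norm via \eqref{Dec_dpsi} interpolated against $\|\psi\|_{H^{N(0)}}$, giving $t\cdot\<t\>^{-3/4}\cdot\<t\>^{-\text{(something)}}\lesssim\ep_1$; the $A\cdot\nab\psi$ term is handled the same way using \eqref{Dec_dA}.

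The main obstacle, then, is making the transport term $t\|u\cdot\nab\psi\|_{H^{N(1)}}$ close with only an $O(\ep_1)$ bound rather than a growing one: the naive product estimate gives $t\cdot t^{-3/4}$, so one must exploit either the extra derivative gap between $N(1)=1$ and $N(0)=3$ (interpolating $\nab\psi$ between $L^6$-decay and $H^{N(0)}$-boundedness to gain genuine time decay beating $t^{1/4}$), or a time-integration-by-parts / Duhamel argument for the profile as in the treatment of $\FF^{-1}(|\xi|\nab_\xi\widehat u)$ in Lemma~\ref{Dec_lem_d2u}. I would organize the proof around the display above, dispatch the easy terms first ($\|\Psi\|_{H^{N(1)}}$, $\|\Om\Psi\|_{H^{N(1)}}$, $\|S\psi\|_{H^{N(1)}}$, $t\|\NN\|$), and spend the bulk of the estimate on the bilinear $u$–$\psi$ and $A$–$\psi$ terms, finally converting the $H^{N(1)}$ bound back to the square-function form \eqref{xidxi-psi} by Littlewood--Paley theory.
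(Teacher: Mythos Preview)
You have overlooked that the ``substantive term'' $\|x\cdot\nab_x\Psi\|_{H^{N(1)}}$ is \emph{exactly} the bootstrap hypothesis \eqref{Main_Prop_Ass2}, which is part of ``the hypothesis in Proposition~\ref{Main_Prop}'' assumed in the statement. The paper's proof is therefore three lines: pass from the square function to $\|\FF^{-1}(|\xi|\nab_\xi\widehat{\Psi})\|_{H^{N(1)}}+\|\psi\|_{H^{N(1)}}$ (the extra $\|\psi\|$ comes from the commutator $|\xi|\nab_\xi\varphi_k\cdot\widehat{\Psi}$), apply \eqref{xidxif} to get $\|x\cdot\nab_x\Psi\|_{H^{N(1)}}+\|\Om\Psi\|_{H^{N(1)}}+\|\psi\|_{H^{N(1)}}$, and then invoke \eqref{Main_Prop_Ass1} and \eqref{Main_Prop_Ass2} directly. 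No equation for $\psi$, no nonlinear estimates, no transport term.

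What you sketched---rewriting $x\cdot\nab_x\Psi$ via $S\psi$ and $t(i\d_t+\De)\psi$ and then controlling $t\|u\cdot\nab\psi\|_{H^{N(1)}}$---is essentially the content of Proposition~\ref{Prop_Psia}, where the goal is to \emph{improve} the bootstrap bound from $\ep_1$ to $\ep_0$. That argument is genuinely delicate: the naive product estimate you wrote gives $t\cdot\<t\>^{-3/4}$ growth, and the paper resolves it not by the interpolation you suggest but by an integration by parts in frequency (writing $i\eta e^{-it|\eta|^2}=(2t)^{-1}\nab_\eta e^{-it|\eta|^2}$) that moves the derivative onto $\widehat{u}$ or $\widehat{\Psi}$ and gains a full $t^{-1}$. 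Your interpolation idea between $\|\nab\psi\|_{W^{N(1),6}}$ and $\|\psi\|_{H^{N(0)}}$ does not close: the $L^6$ decay is $\<t\>^{-1}$ but the $H^{N(0)}$ norm is merely bounded, so interpolation cannot produce decay faster than $\<t\>^{-1}$, still leaving $t\cdot\<t\>^{-3/4}\cdot\<t\>^{-\theta}$ with $\theta<1$.

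A minor point: in your treatment of $\Om\psi$ you invoke \eqref{Dec_d1/2-A} to bound $\|A\|_{H^{N(0)}}$, but the $A_i$ in $\tilde\Om_i=\Om_i+A_i$ are the constant antisymmetric matrices from \eqref{tlOm}, not the connection coefficients; so $\|A\psi\|\lesssim\|\psi\|$ is immediate.
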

\begin{proof}
    By \eqref{xidxif}, \eqref{Main_Prop_Ass1} and \eqref{Main_Prop_Ass2} we bound this by
    \begin{align*}
        (\sum_k 2^{2k^+}\| |\xi|\nab_\xi \widehat{\Psi_k}\|_{L^2}^2)^{1/2}\lesssim &\  \|\FF^{-1}(|\xi|\nab_\xi \widehat{\Psi})\|_{H^{1}}+\|\psi\|_{H^{1}}\\
        \lesssim &\  \|x\cdot\nab_x\Psi\|_{H^{1}}+\| \Om\Psi\|_{H^{1}}+\|\psi\|_{H^{1}}\lesssim \ep_1.
    \end{align*}
\end{proof}

\bigskip 
\section{Proof of Proposition \ref{Main_Prop} and Theorem \ref{Main_thm-ori}}
In this section we prove the bootstrap proposition \ref{Main_Prop}, and then prove the main theorem \ref{Main_thm-ori}. Here we start with the energy estimates \eqref{Main_Prop_result1}.

\begin{prop}       
\label{Prop_Ene_Sob}
With the notation and hypothesis in Proposition $\ref{Main_Prop}$, for any $t\in[0,T]$, we have
\begin{equation*}
\|(u,\psi)\|_{H^3}+\|\nab u\|_{L^2([0,t]:H^3)}
+\sum_{|a|=1}\Big(\|(Z^a u,Z^a\psi)\|_{H^1}+\|\nab Z^a u\|_{L^2([0,t]:H^1)}\Big)\les\epsilon_0.
\end{equation*}
\end{prop}
\begin{proof}
    We start with the energy estimates of velocity $Z^au$ for $|a|\leq 1$. Define the energy functional by
    \begin{equation*}
        E^a_u(t):=\frac{1}{2}\| Z^au\|_{H^{3-2|a|}}^2+\int_0^t \| \nab Z^au\|_{H^{3-2|a|}}^2ds.
    \end{equation*}
    Then by $u$-equation in \eqref{sys-VF}, $\div Z^b u=0$, integration by parts and Sobolev embeddings, we have
    \begin{align*}
        \frac{d}{dt}E^a_u(t)\lesssim &\  \sum_{|n|\in \{0,\ 3-2|a|\}}\sum_{b+c=a}\Big|\int \d^n Z^a u\cdot\d^n\big(Z^bu\cdot \nab Z^cu+\d_j(Z^b\psi Z^c\psi)\big) dx\Big|\\
        \les  &\  \sum_{|n|\in \{0,\ 3-2|a|\}}\sum_{b+c=a}\Big|\int \d_j\d^n Z^a u\cdot\d^n\big(Z^bu_j\cdot Z^cu+(Z^b\psi Z^c\psi)\big) dx\Big|\\
        \lesssim &\  \| \nab Z^a u\|_{H^{3-2|a|}}\Big(\sum_{|b+c|\leq 1}\| \nab  Z^b u\|_{H^{3-2|b|}}\|   Z^cu\|_{H^{3-2|c|}}\\
        &\quad +\sum_{|b|\leq 1}\|  Z^b\psi\|_{H^{3-2|a|}}\| \psi\|_{W^{1,\infty}}\Big).
    \end{align*}
    By the assumption \eqref{Main_Prop_Ass1}, \eqref{Dec_dpsi} and H\"older inequality we obtain
    \begin{align*}
        E^a_u(t)&\lesssim E^a_u(0)+\int_0^t \d_s E^a_u(s)ds\\
        &\lesssim \ep_0^2+\ep_1^3+\int_0^t \ep_1^2 \<s\>^{-3/4}\| \nab Z^a u(s)\|_{H^{3-2|a|}}ds\lesssim \ep_0^2.
    \end{align*}
    
    \bigskip
    
    Next, we prove the energy estimate of differentiated field $\psi$. Let's recall the nonlinearities $\mathcal N$ in $\psi$-equation
    \[
    \mathcal N:=(A_{d+1}+(A_l^2+u\c A))\psi_m-i\d_m u\c \psi+i\Im(\psi_l\bar{\psi}_m)\psi_l.
    \]
    By $\psi$-equation and $\div u=\div A=0$ we have
    \begin{align*}
        \frac{d}{dt}\|\psi\|^2_{H^{3}}&= \sum_{|n|\in\{0,3\}}\Big[-\Re\<\d^n \psi,\d^n((u+2A)\cdot \nab \psi) \>+\Re i\< \d^n\psi,\d^n \mathcal N\>\Big]\\
        &\lesssim  \| \psi\|_{H^{3}}(\|\nab(u+A)\cdot \psi\|_{H^{3}}+\|\mathcal N\|_{H^{3}}).
    \end{align*}
    We use \eqref{Dec_du} and \eqref{Dec_dpsi} to bound the second term by
    \begin{align*}
        \|\nab(u+A)\cdot \psi\|_{H^{3}}&\lesssim  \|\nab(u+A)\|_{L^\infty}\|\psi\|_{H^{3}}+\|\nab(u+A)\|_{H^{3}}\|\psi\|_{L^\infty}\\
        &\lesssim  \ep_1^2\<t\>^{-5/4}.
    \end{align*}
    By the above two estimate and \eqref{Dec_NonL} and \eqref{Main_Prop_Ass1}, we obtain 
    \begin{align*}
        \frac{d}{dt}\|\psi\|_{H^3}^2\les \ep_1 ( \ep_1^2\<t\>^{-5/4+3\de}+\ep_1\<t\>^{-3/4}\|\nab u\|_{H^{3}}).
    \end{align*}
    Integrating in time from $0$ to $t$ yields
    \begin{align*}
        \|\psi\|^2_{H^{3}}&\lesssim \|\psi(0)\|^2_{H^{3}}+\ep_1^3 \les \ep_0^2+\ep_1^3\lesssim \ep_0^2.
    \end{align*}
    
    Finally, we prove the energy estimate for $Z^a\psi$ with $|a|=1$. By $\psi$-equation in \eqref{sys-VF} and $\div u=\div A=0$ we have
    \begin{align*}
        &\frac{d}{dt}\|Z^a\psi\|_{H^{1}}\\
        &= \sum_{|n|\in\{0,1\};|a|=1}\Big[-\Re\<\d^n Z^a\psi,\sum_{b+c=a}\d^n((Z^bu+2Z^bA)\cdot \nab Z^c\psi) \>+\Re i\< \d^n Z^a\psi,\d^n \mathcal N^{(a)}\>\Big]\\
        &\lesssim  \sum_{|a|=1}\| Z^a\psi\|_{H^{1}}\Big(\|\nab^{\frac 1 2}(Z^au,Z^aA)\|_{H^{1}}\|\nab\psi\|_{W^{1,6}}\\
        &\quad\quad+\|\nab(u,A)\|_{W^{1,\infty}}\|Z^a\psi\|_{H^{1}}+\|\mathcal N^{(1)}\|_{H^{1}}\Big).
    \end{align*}
    Then this combined with Lemma \ref{Dec_A_Cor}, \eqref{Dec_dpsi}, \eqref{Dec_du} and \eqref{Dec_NonL1} to give 
    \begin{align*}
        \frac{d}{dt}\|Z^a\psi\|_{H^{1}}
        &\les \ep_1 \sum_{|a|\leq 1}\Big( \big(\|\nab^{1/2}Z^a u\|_{H^1}+\ep_1^2 \<t\>^{-1/2}\big)\ep_1 \<t\>^{-1}+ \big(\ep_1 \<t\>^{-5/4}+\ep_1^2\<t\>^{-3/2+3\de/4}\big) \ep_1\\
        &\quad + \ep_1^2\<t\>^{-5/4+3\de}+\ep_1 \<t\>^{-3/4}\|\nab Z^a u\|_{H^1}   \Big)\\
        &\les \ep_1\sum_{|a|\leq 1}\Big(   \ep_1^2 \<t\>^{-5/4+3\de}+\ep_1 \<t\>^{-1}\| Z^a u\|_{H^1}^{1/2}\|\nab Z^a u\|_{H^1}^{1/2}+\ep_1 \<t\>^{-3/4}\|\nab Z^a u\|_{H^1}       \Big)\\
        &\les \ep_1\Big(   \ep_1^2 \<t\>^{-5/4+3\de}+\sum_{|a|\leq 1}\ep_1 \<t\>^{-3/4}\|\nab Z^a u\|_{H^1}       \Big)
    \end{align*}
    Integrating in time yields
    \begin{align*}
        \|Z^a\psi\|^2_{H^{1}}&\lesssim \ep_0^2+\int_0^t \ep_1\Big(\ep_1^2\<s\>^{-5/4+3\de}+\sum_{|a|\leq 1}\ep_1 \<t\>^{-3/4}\|\nab Z^a u\|_{H^1} \Big)ds\\
        &\lesssim \ep_0^2+\ep_1^3\lesssim \ep_0^2.
    \end{align*}
    The energy estimate for $Z^a\psi$ follows, and hence we complete the proof of the proposition.
\end{proof}

\bigskip

Next, we prove the $L^2$ weighted bound \eqref{Main_Prop_result2}.
\begin{prop} \label{Prop_Psia}
	With the hypothesis in Proposition \ref{Main_Prop}, for any $t\in[0,T]$ and profile $\Psi=e^{-it\De}\psi$, we have
	\begin{equation*}      
	\lV x\cdot\nab\Psi\rV_{H^{1}}\lesssim \ep_0.
	\end{equation*}
\end{prop}
\begin{proof}
By scaling vector field $S=t\d_t+x\cdot\nab$, the fact that $[e^{it\De},S]=0$ and \eqref{Main_Prop_result1}, we have
	\begin{equation}\label{keyIneq}
	\begin{aligned}
	\lV x\cdot\nab\Psi\rV_{H^{1}}&\lesssim  \lV S\Psi\rV_{H^{1}}+\lV t\d_t\Psi\rV_{H^{1}}\\
	&\lesssim \lV S\psi\rV_{H^{1}}+\lV t\d_t\Psi\rV_{H^{1}}\lesssim \ep_0+t\lV \d_t \Psi\rV_{H^{1}}.
	\end{aligned}
	\end{equation}
    Then it suffices to estimate the last term $\d_t\Psi$ in the right-hand side of \eqref{keyIneq}. By \eqref{Psi-def} and the $\psi$-equation in \eqref{sys-VF}, we have
    \begin{align*}
        \|\d_t\Psi\|_{H^1}&= \|\d_t(e^{-it\De}\psi)\|_{H^1}  =  \|-ie^{-it\De}(i\d_t\psi+\De\psi)\|_{H^1}\leq \|i\d_t\psi+\De\psi\|_{H^1}\\
        &\les \lV u\cdot\nab\psi\rV_{H^{1}}+\lV A\cdot\nab\psi\rV_{H^{1}}+
	\lV \NN^{(0)}\rV_{H^{1}}.
    \end{align*} 
    Thus it suffices to prove
	\begin{gather}	\label{dtPsi_Term2}
	\lV u\cdot\nab\psi\rV_{H^{1}}+\lV A\cdot\nab\psi\rV_{H^{1}}+
	\lV \NN^{(0)}\rV_{H^{1}}\lesssim \ep_1^2\<t\>^{-5/4+3\de}.
	\end{gather}
	
First, we estimate the term $u\cdot\nab\psi$. Using Littlewood-Paley decomposition, we write $u\cdot\nab\psi$ as
\begin{align*}
    u\cdot\nab\psi&=\sum_{k,k_1,k_2} P_k (u_{k_1}\cdot\nab \psi_{k_2})\\
    &=\sum_{k} P_k (\sum_{k_1\leq k-5}u_{k_1}\cdot\nab \sum_{k_2=k+O(1)}\psi_{k_2})+\sum_{k} P_k (\sum_{k-5\leq k_1\leq k+5}u_{k_1}\cdot\nab \sum_{k_2<k+6}\psi_{k_2})\\
    &\quad +\sum_{k} P_k (\sum_{k_1> k+6}u_{k_1}\cdot\nab \sum_{k_2=k_1+O(1)}\psi_{k_2}).
\end{align*}
For each term $P_k (u_{k_1}\cdot\nab \psi_{k_2})$, by \eqref{Psi-def} and integration by parts we have
\begin{align*}
&\FF P_k(u_{k_1}\cdot\nab\psi_{k_2})(\xi)\\
=&\ \varphi_k(\xi)\int e^{-it|\eta|^2} \widehat{u_{k_1}}(\xi-\eta)\cdot i\eta \widehat{\Psi_{k_2}}(\eta)d\eta\\
=&\ (2t)^{-1}\varphi_k(\xi)\int \nab_{\eta}\widehat{u_{k_1}}(\xi-\eta) \widehat{\psi_{k_2}}(\eta)+\widehat{u_{k_1}}(\xi-\eta)e^{-it|\eta|^2}\nab_{\eta}\widehat{\Psi_{k_2}}(\eta)d\eta.
\end{align*}
Then for the low-high interaction, i.e. $k_1\leq k-5,\ k_2=k+O(1)$. By \eqref{Sym_Pre}, Sobolev embedding and interpolation inequality we obtain
	\begin{align*}
	    \|P_k(u_{<k-5}\cdot\nab\psi_{k})\|_{L^2}\lesssim &\ t^{-1}\big( \|\FF^{-1}(\nab_\eta \widehat{u_{<k-5}}(\eta))\|_{L^6}\|\psi_k\|_{L^3}
	     +\|u_{<k-5}\|_{L^3}\|\FF^{-1}(\nab_\eta \widehat{\Psi_{k}}(\eta))\|_{L^6} \big)\\
	    \lesssim &\ t^{-1}\big(\|\FF^{-1}(|\eta|\nab_\eta \widehat{u_{<k-5}}(\eta))\|_{L^2}\|\psi_k\|_{L^\infty}^{1/3}\|\psi_k\|_{L^2}^{2/3}\\
	    &\ +\|u\|_{L^\infty}^{1/3}\| u\|_{L^2}^{2/3}\|\FF^{-1}(|\eta|\nab_\eta \widehat{\Psi_{k}}(\eta))\|_{L^2} \big).
	\end{align*}
Using \eqref{Dec_u}, \eqref{xidxi-u}, \eqref{Dec_psi} and \eqref{xidxi-psi} to bound this by
	\begin{align*}
	    \big(\sum_k 2^{2k^+}\|P_k(u_{<k-5}\cdot\nab\psi_{k})\|_{L^2}^2\big)^{1/2}\lesssim \ep_1^2 \<t\>^{-5/4}.
	\end{align*}
	For the high-low interaction, i.e. $k-5\leq k_1\leq k+5 ,\ k_2<k+6$. By \eqref{Sym_Pre}, Sobolev embedding and interpolation inequality we have
	\begin{align*}
	    &\sum_{k_2<k+6}\|P_k(u_{k}\cdot\nab\psi_{k_2})\|_{L^2}\\
	    \lesssim &\ \sum_{k_2<k+6}t^{-1}\big( \|\FF^{-1}(\nab_\eta \widehat{u_{k}}(\eta))\|_{L^6}\|\psi_{k_2}\|_{L^3}\\
	    &\ \quad+\|u_{k}\|_{L^{\frac{3}{1+3\de}}}\|\FF^{-1}(\nab_\eta \widehat{\Psi_{k_2}}(\eta))\|_{L^{\frac{6}{1-6\de}}} \big)\\
	    \lesssim &\ t^{-1}\big(  \|\FF^{-1}(|\eta|\nab_\eta \widehat{u_{k}}(\eta))\|_{L^2}\|\psi\|_{W^{1,\frac{3}{1+3\de}}}\\
	    &\ +\|u_k\|_{L^\infty}^{(1-6\de)/3}\| u_k\|_{L^2}^{(2+6\de)/3}(\|\FF^{-1}(|\eta|\nab_\eta \widehat{\Psi}(\eta))\|_{H^1}+\|\psi\|_{H^1}) \big).
	\end{align*}
	We use interporlation inequality, \eqref{Dec_psi}, \eqref{xidxi-u}, \eqref{Dec_u} and \eqref{xidxi-psi} to bound this by
	\begin{align*}
	    \Big(\sum_k 2^{2k^+}\|P_k(u_{k+O(1)}\cdot\nab\psi_{<k+6})\|_{L^2}^2\Big)^{1/2}\lesssim \ep_1^2\<t\>^{-\frac{5}{4}+2\de}
	\end{align*}
	For the high-high interaction, i.e. $k_1>k+5,\ k_2=k_1+O(1)$, by Bernstein's inequality and \eqref{Sym_Pre} we have
	\begin{align*}
	    \|P_k(u_{k_1}\cdot\nab\psi_{k_2})\|_{L^2}\lesssim &\ t^{-1}2^{3\de k}\big( \|\FF^{-1}(\nab_\eta \widehat{u_{k_1}}(\eta))\|_{L^6}\|\psi_{k_2}\|_{L^\frac{3}{1+3\de}}\\
	    &\ +\|u_{k_1}\|_{L^\frac{3}{1+3\de}}\|\FF^{-1}(\nab_\eta \widehat{\Psi_{k_2}}(\eta))\|_{L^6} \big)\\
	    \lesssim &\ t^{-1}2^{3\de k}\big(  \|\FF^{-1}(|\eta|\nab_\eta \widehat{u_{k_1}}(\eta))\|_{L^2}\|\psi_{k_2}\|_{L^\infty}^{\frac{1-6\de}{3}}\|\psi_{k_2}\|_{L^2}^{\frac{2+6\de}{3}}\\
	    &\ +\|u\|_{L^\infty}^{\frac{1-6\de}{3}}\| u\|_{L^2}^{\frac{2+6\de}{3}}\|\FF^{-1}(|\eta|\nab_\eta \widehat{\Psi_{k_2}}(\eta))\|_{L^2} \big).
	\end{align*}
	We then use \eqref{Dec_psi} and \eqref{Dec_u} to bound this by
		\begin{align*}
	    &\sum_{k_1\sim k_2>k+O(1)} \|P_k(u_{k_1}\cdot\nab\psi_{k_2})\|_{L^2}\\
	    \lesssim & \ t^{-1}2^{3\de k}2^{-k_2^+}\ep_1\<t\>^{-\frac{1-6\de}{4}}\big(\|\FF^{-1}(|\eta|\nab_\eta \widehat{u_{k_1}}(\eta))\|_{L^2}
	    +\|\FF^{-1}(|\eta|\nab_\eta \widehat{\Psi_{k_2}}(\eta))\|_{L^2}\big) .
	\end{align*}
	By \eqref{Main_Prop_Ass1}, \eqref{xidxi-u} and \eqref{xidxi-psi}, this implies 
	\begin{align*}
	    &\Big[\sum_k 2^{2k^+}(\sum_{k_1\sim k_2>k+O(1)} \|P_k(u_{k_1}\cdot\nab\psi_{k_2})\|_{L^2})^2\Big]^{1/2}\\
	    \lesssim &\ \ep_1\<t\>^{-\frac{5-6\de}{4}}(\|\FF^{-1}(|\eta|\nab_\eta \widehat{u}(\eta))\|_{H^1}+\| u\|_{H^1}+\|\FF^{-1}(|\eta|\nab_\eta \widehat{\Psi}(\eta))\|_{H^1}+\|\psi\|_{H^1})\\
	    \lesssim &\ \ep_1^2\<t\>^{-\frac{5-6\de}{4}}.
	\end{align*}
	Hence, the bound \eqref{dtPsi_Term2} for $u\cdot\nab\psi$ follows.
	
	By \eqref{Dec_d1/2-A} and \eqref{Dec_dpsi} we bound the second term in \eqref{dtPsi_Term2} by
	\begin{align*}
	    \| A\c\nab\psi\|_{H^{1}}\lesssim \| \nab^{1/2}A\|_{H^{1}}\|\nab\psi\|_{W^{1,6}}\lesssim \ep_1^2\<t\>^{-3/2+3\de/2}.
	\end{align*}
	The last term in \eqref{dtPsi_Term2} can be proved using the similar argument to that of \eqref{Dec_NonL} with regularity index $1$. We have
	\begin{align*}
	    \| \mathcal N^{(0)}\|_{H^{1}}\lesssim \ep_1^2\<t\>^{-5/4+3\de}+\ep_1\<t\>^{-3/4}\|\nab u\|_{H^{1}}\lesssim \ep_1^2\<t\>^{-5/4+3\de}.
	\end{align*}
	This completes the proof of the lemma.
\end{proof}

\bigskip

From the above estimate of $\d_t\Psi$ we obtain the scattering. 

\begin{prop}[Scattering]\label{scattering-Lemma}
    With the hypothesis in Proposition \ref{Main_Prop}, for any $t\in[0,T]$, we obtain the scattering
	\begin{equation*}         
	\lim_{t\rightarrow \infty}\lV \psi-e^{it\De}\Psi_\infty\rV_{H^{1}}= 0.
	\end{equation*}
\end{prop}
\begin{proof}
    By the definition of $\Psi$ \eqref{Psi-def} we have
    \begin{align*}
        \Psi(t)=&\ \Psi(0)+\int_0^t \d_s\Psi(s)ds\\
        =&\ \Psi(0)-\int_0^t e^{-is\De} ((u+2A)\c \nab\psi+\NN^{(0)})(s) ds.
    \end{align*}
    By \eqref{dtPsi_Term2} we obtain
    \begin{align*}
        \|\Psi(t_1)-\Psi(t_2)\|_{H^{1}}\lesssim &\  \int_{t_1}^{t_2} \|((u+2A)\c \nab\psi+\NN^{(0)})(s)\|_{H^{1}}ds\\
        \lesssim &\  \int_{t_1}^{t_2}\ep_1^2\<s\>^{-5/4+3\de}ds\rightarrow 0,\quad {\rm as}\ t_1,t_2\rightarrow\infty.
    \end{align*}
    This motivates us to define the function $\Psi_{\infty}$ as
    \[\Psi_{\infty}:=\Psi(0)-\int_0^\infty e^{-is\De} ((u+2A)\c \nab\psi+\NN^{(0)})(s) ds.\]
    Then we have
    \begin{align*}
        \|\psi-e^{it\De}\Psi_\infty\|_{H^{1}}=&\ \|e^{-it\De}\psi-\Psi_\infty\|_{H^{1}}\\
        \leq &\ \int_t^\infty \| ((u+2A)\c \nab\psi+\NN^{(0)})(s)\|_{H^{1}} ds\\
        \lesssim & \ \ep_1^2 \<t\>^{-1/4+3\de},\quad {\rm as}\ t\rightarrow \infty.
    \end{align*}
    This implies that $\psi$ scatters to the linear solution $e^{it\De}\Psi_\infty$ in the lower regularity Sobolev space $H^{1}$.
\end{proof}
\bigskip

In order to prove the Theorem \ref{Main_thm-ori}, we recall the following local existence result and an useful lemma.
\begin{prop}[Local solution, \cite{Hu20}]       \label{prop-local}
	The Cauchy problem \eqref{Sys-Ori} with $(u_0,\phi_0)\in H^k\times H^{k+1}_Q$, for any integer $k\geq [\frac{d}{2}]+2$, admits a unique local solution $(u,\phi)$ satisfying 
	\begin{equation*}
	\lV u\rV_{H^k}+\lV\nab u\rV_{L^2([0,t];H^k)}+\lV \phi\rV_{H^{k+1}_Q}\leq C(k,\lV u_0\rV_{H^k},\lV\phi_0\rV_{H^{k+1}_Q}),
	\end{equation*}
	for any $t\in[0,T]$, where $T=T(\lV u_0\rV_{H^3},\lV\phi_0\rV_{H^4_Q})$.
\end{prop}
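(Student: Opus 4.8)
The plan is to adapt the parabolic regularization scheme used for large data in \cite{Hu20} to the Sobolev scale of the statement. Note first that for $d\in\{2,3\}$ the hypothesis $k\ge[d/2]+1$ forces $k\ge 2$, so $H^k(\R^d)$ is a Banach algebra continuously embedded in $W^{1,\infty}(\R^d)$; this is the only role of the dimension. For $\ep\in(0,1]$ I would replace the third equation of \eqref{Sys-Ori} by the regularized Landau--Lifshitz type equation
\[
\d_t\phi^\ep+u^\ep\cdot\nab\phi^\ep=\phi^\ep\times\De\phi^\ep+\ep\big(\De\phi^\ep+|\nab\phi^\ep|^2\phi^\ep\big),
\]
coupled with the unchanged Navier--Stokes equation for $u^\ep$ (forcing $-\div(\nab\phi^\ep\odot\nab\phi^\ep)$), keeping the same initial data. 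Both equations are now parabolic, so a contraction/Galerkin argument produces a unique solution on a short interval $[0,T_\ep]$ with $\phi^\ep-Q\in H^{k+1}$ and $u^\ep\in H^k$. The sphere constraint persists: writing $g=|\phi^\ep|^2-1$ and using $\phi^\ep\cdot(\phi^\ep\times\De\phi^\ep)=0$ one finds that $g$ solves the homogeneous linear parabolic equation $\d_t g+u^\ep\cdot\nab g=\ep\De g+2\ep|\nab\phi^\ep|^2 g$ with $g(0)=0$, hence $g\equiv0$ and $|\phi^\ep|\equiv1$.

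The heart of the matter is an $\ep$-uniform a priori estimate. Put $X_\ep=\|u^\ep\|_{H^k}^2+\|\nab\phi^\ep\|_{H^k}^2$. Standard commutator estimates for the transport terms $u^\ep\cdot\nab u^\ep$ and $u^\ep\cdot\nab\phi^\ep$ (using $\div u^\ep=0$), the product bound $\|\div(\nab\phi^\ep\odot\nab\phi^\ep)\|_{H^{k-1}}\les\|\nab\phi^\ep\|_{H^k}^2$, and --- crucially --- the Schr\"odinger-map cancellation (in the $H^{k+1}$ energy estimate for $\phi^\ep$ the top-order term $\langle\nab^{k+1}\phi^\ep,\phi^\ep\times\nab^{k+1}\De\phi^\ep\rangle$ is annihilated, after one integration by parts and using $|\phi^\ep|=1$, modulo lower-order commutators), together with the observation that the $\ep$-terms contribute $-\ep\|\nab\phi^\ep\|_{\dot H^{k+1}}^2$ plus $O(\ep)$ errors absorbed by it, yield $\frac{d}{dt}X_\ep+\|\nab u^\ep\|_{H^k}^2\les P(X_\ep)$ for a fixed polynomial $P$, uniformly in $\ep$. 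Since for $k\ge2$ the right-hand side is in fact controlled by the low norm $Y_\ep=\|u^\ep\|_{H^2}^2+\|\nab\phi^\ep\|_{H^2}^2$ via Sobolev embedding, $Y_\ep$ obeys a Riccati inequality and thus stays bounded on an interval $[0,T]$ with $T\gtrsim Y_\ep(0)^{-1}$ independent of $\ep$; on $[0,T]$ the full norm then satisfies the linear inequality $\frac{d}{dt}X_\ep\les C(Y_\ep(0))\,X_\ep$, giving the stated bound with $T=T(\|u_0\|_{H^2},\|\nab\phi_0\|_{H^2_Q})$.

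With these bounds in hand, the equations give $\d_t u^\ep\in L^2_tH^{k-1}$ and $\d_t\phi^\ep\in L^\infty_tH^{k-1}$, so by Aubin--Lions--Simon a subsequence of $(u^\ep,\phi^\ep)$ converges, strongly in $C_t([0,T];H^s_{\mathrm{loc}})$ for every $s<k$ and weakly-$*$ in the natural spaces, to a limit $(u,\phi)$ with $|\phi|=1$ solving \eqref{Sys-Ori}; the $\ep$-terms vanish in the limit and the norm bound passes by lower semicontinuity. To upgrade to $u\in C_tH^k$ and $\phi\in C_tH^{k+1}_Q$ --- the latter without any parabolic smoothing --- I would run the Bona--Smith argument: solve with mollified data, obtain smooth solutions, and show they are Cauchy in $C_tH^{k+1}$ as the mollification is removed, using the difference estimate below. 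Uniqueness itself follows by estimating the differences $v=u_1-u_2$, $\varphi=\phi_1-\phi_2$ at one regularity level lower: $\|v\|_{L^2}^2+\|\nab\varphi\|_{L^2}^2$ satisfies $\frac{d}{dt}\big(\|v\|_{L^2}^2+\|\nab\varphi\|_{L^2}^2\big)\les C\big(\sup_{[0,T]}(\|u_i\|_{H^k}+\|\phi_i\|_{H^{k+1}_Q})\big)\big(\|v\|_{L^2}^2+\|\nab\varphi\|_{L^2}^2\big)$, the derivative loss in the Schr\"odinger-map difference $\varphi\mapsto\phi_1\times\De\varphi$ being again removed by the antisymmetry of $\phi_1\times(\cdot)$ after integration by parts; Gronwall then forces $v\equiv0$, $\varphi\equiv0$.

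I expect Step~2 to be the main obstacle: one must simultaneously (i) extract the geometric cancellation in the top-order $\phi^\ep$-energy while controlling all commutators by $X_\ep$, (ii) verify that the parabolic correction $\ep(\De\phi^\ep+|\nab\phi^\ep|^2\phi^\ep)$ produces a non-positive principal contribution and only $O(\ep)$ remainders, and (iii) arrange that the fluid--map coupling closes at the level of the low norm $\|u\|_{H^2}+\|\nab\phi\|_{H^2}$ so that the lifespan depends only on those quantities. The secondary technical point is the strong $H^{k+1}$-in-time continuity of $\phi$, for which the Bona--Smith mollification combined with the difference estimate above is the standard remedy.
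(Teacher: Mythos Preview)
The paper does not give its own proof of this proposition: it is stated without argument and relies on the first author's earlier work \cite{Hu20}, which, as the introduction recalls, establishes local existence ``by parabolic approximation and parallel transport.'' Your outline follows the same parabolic-approximation strategy and is essentially sound; the sphere-constraint computation, the top-order cross-product cancellation $\int \partial_j\partial^\alpha\phi\cdot(\phi\times\partial_j\partial^\alpha\phi)\,dx=0$, and the Bona--Smith/difference-estimate steps are all correctly identified.

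The one place where your sketch diverges from the cited approach is Step~2. In \cite{Hu20} the $\ep$-uniform $H^{k+1}$ control of $\phi^\ep$ is obtained through a moving-frame (parallel-transport) representation: one differentiates $\phi^\ep$ in a local orthonormal frame of $T_{\phi^\ep}\S^2$, so that the geometric constraint is built in and the energy estimate reduces to a parabolic system for the frame coefficients. Your alternative---working directly with $\nabla^{k+1}\phi^\ep$ in the ambient $\R^3$ and invoking the triple-product cancellation---is legitimate and arguably more elementary, but you should be aware that the commutator bookkeeping is heavier this way: the Leibniz expansion of $\nabla^{k+1}(\phi^\ep\times\Delta\phi^\ep)$ produces terms like $\nabla^{k+1}\phi^\ep\times\Delta\phi^\ep$ and $\nabla\phi^\ep\times\nabla^{k}\Delta\phi^\ep$ which, after one integration by parts, still carry $k+2$ derivatives on a single factor and must be paired against $\nabla^{k+1}\phi^\ep$; closing these in $H^k$ for $\nabla\phi^\ep$ requires the same cancellation applied once more (or interpolation), and similarly for the $\ep|\nabla\phi^\ep|^2\phi^\ep$ term. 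None of this is an obstruction, but it is where the actual work lies, and the parallel-transport formulation in \cite{Hu20} was introduced precisely to streamline it.
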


In order to construct the Schr\"odinger flow $\phi$ from the solution $\psi$ of \eqref{sys-2}, we need the following lemma.
\begin{lemma}[Bound for $\phi$]  \label{phi-bypsi}
     Assume that $\psi$ is a solution of \eqref{sys-2}-\eqref{A,Ad+1}. If the differentiated fields $\psi$ has the additional property \begin{equation*}
        \sup_{t\in[0,T]}\Big(\|\psi(t)\|_{H^3}+\sum_{|a|=1}\|Z^a\psi(t)\|_{H^{1}}\Big)\leq \ep.
    \end{equation*} 
    Then we have the bound
    \begin{equation*}    
         \sup_{t\in[0,T]}\Big(\|\nab \phi \|_{H^3}+\sum_{|a|=1}\|\nab Z^a \phi \|_{H^{1}}\Big)\lesssim \ep.
     \end{equation*}
\end{lemma}
\begin{proof}
     In fact, we prove
     \begin{equation}     \label{phi-bound}
         \sup_{t\in[0,T]}\Big(\| (\nab \phi,\nab v,\nab w) \|_{H^{3}}+\sum_{|a|=1}\| (\nab Z^a\phi,\nab Z^av,\nab Z^aw) \|_{H^{1}}\Big)\lesssim \ep.
     \end{equation}
     Recall the formulas \eqref{phi-v-w},
     \begin{equation*}   
     \left\{ \begin{aligned}
     &\d_m \phi=v\Re \psi_m+w\Im \psi_m,\\
     &\d_m v=-\phi\Re \psi_m+w A_m,\\
     &\d_m w=-\phi\Im \psi_m- v A_m.
     \end{aligned}
     \right.
     \end{equation*}
     Since $|\phi|=|v|=|w|=1$, by \eqref{A_m-Eq} and Sobolev embedding we have
     \begin{align*}
         \|\nab(\phi,v,w)\|_{L^2}\lesssim \| \psi\|_{L^2}+\|A\|_{L^2}\lesssim \ep.
     \end{align*}
     We then prove the bound the first term in \eqref{phi-bound} by induction. Precisely, assume that 
     \begin{equation*}     
         \|\nab\phi\|_{H^n}+\|\nab v\|_{H^n}+\|\nab w\|_{H^n}\lesssim \ep,\quad \text{for any }n<l\leq 3.
     \end{equation*}
     By Sobolev embedding and the above inductive assumption, we have
     \begin{align*}
         \|\nab^{l+1}\phi\|_{L^2}\lesssim & \ \|\nab^l \psi\|_{L^2}+\sum_{l_1+l_2=l,0<l_2<l}\|\nab^{l_1}\psi\|_{L^4}\|\nab^{l_2}(v,w)\|_{L^4}+\|\psi\|_{L^\infty}\|\nab^l(v,w)\|_{L^2}\\
         \lesssim &\ \|\psi\|_{H^{3}}(1+\|\nab(v,w)\|_{H^{l-1}})\\
         \lesssim &\ \|\psi\|_{H^{3}}\lesssim \ep.
     \end{align*}
     Similarly, by Sobolev embedding and \eqref{A_m-Eq} we also have
     \begin{align*}
         \|\nab^{l+1}v\|_{L^2}+\|\nab^{l+1}w\|_{L^2}
         &\lesssim (\|\psi\|_{H^{3}}+\|A\|_{H^{3}})(1+\|\nab (\phi,v,w)\|_{H^{l-1}})\\
         &\lesssim \|\psi\|_{H^{3}}\lesssim \ep.
     \end{align*}
     
     We continue to bound the second term $\nab Z^a(\phi,v,w)$ for $|a|=1$. By \eqref{phi-v-w} we have
     \begin{align*}
         \sum_{|a|=1}\|\nab Z^a(\phi,v,w) \|_{L^2}&\lesssim \|\nab (\phi,v,w) \|_{L^2}+\sum_{|a|=1}\| Z^a(\psi,A) \|_{L^2}\|(\phi,v,w)\|_{L^\infty}\\
         &\quad +\|(\psi,A)\|_{L^3}\sum_{|a|=1}\|Z^a(\phi,v,w)\|_{L^6}\\
         &\lesssim  \ep+\ep \sum_{|a|=1}\|\nab Z^a(\phi,v,w)\|_{L^2}.
     \end{align*}
     Similarly, we also have
     \begin{align*}
          \sum_{|a|=1}\|\nab^2 Z^a(\phi,v,w) \|_{L^2}&\les \|\nab^2 (\phi,v,w) \|_{L^2}+\sum_{|a|=1}\| Z^a(\psi,A) \|_{H^1}\|(\phi,v,w)\|_{W^{1,\infty}}\\
         &\quad +\|\nab (\psi,A)\|_{L^3}\sum_{|a|=1}\|Z^a(\phi,v,w)\|_{L^6}\\
         &\quad +\| (\psi,A)\|_{L^\infty}\sum_{|a|=1}\|\nab Z^a(\phi,v,w)\|_{L^2}\\
         &\lesssim  \ep.
     \end{align*}
     These conclude the bound \eqref{phi-bound}, and complete the proof of the lemma.
\end{proof}

As a corollary, we can obtain the long time behavior \eqref{thm-scatter}.
\begin{cor}\label{thm-scatter-cor}
    With the hypothesis in Proposition \ref{Main_Prop}, there exists $v_\infty, w_\infty$ and $\Psi_\infty\in H^1$ such that 
	\begin{align}  \label{thm-scatter-re}
	\lim_{t\rightarrow\infty}\|\d_j\phi-v_\infty \Re( e^{it\De}\Psi_{\infty,j})-w_\infty \Im( e^{it\De}\Psi_{\infty,j})\|_{H^1}=0.
	\end{align}
\end{cor}
\begin{proof}
    In Coulomb gauge, the $\psi$ is scattering given by Proposition \ref{scattering-Lemma}. Then for any $1\leq j\leq 3$ we have
     \begin{align}   \label{scatter-re}
     \lim_{t\rightarrow\infty} \|\d_j \phi\cdot v-\Re(e^{it\De}\Psi_{\infty,j})\|_{H^1}+\|\d_j \phi\cdot w-\Im(e^{it\De}\Psi_{\infty,j})\|_{H^1}=0.
     \end{align}
     We choose 
     $$v_{\infty}=\lim_{|x|\rightarrow \infty}v(t,x),\quad w_{\infty}=\lim_{|x|\rightarrow \infty}w(t,x),$$
     which are fixed vectors satisfying $v_\infty, w_\infty \perp Q$. In order to bound the left hand of \eqref{thm-scatter}, we rewrite it as 
\begin{align} \nonumber
    &\d_j \phi-v_\infty \Re(e^{it\De}\Psi_{\infty,j})-w_\infty \Im(e^{it\De}\Psi_{\infty,j})\\ \label{RHS}
    &=\big(\d_j \phi\cdot v-v\cdot v_\infty \Re(e^{it\De}\Psi_{\infty,j})-v\cdot w_\infty \Im(e^{it\De}\Psi_{\infty,j})\big) v\\ \nonumber
    &\quad +\big(\d_j \phi\cdot w-w\cdot v_\infty \Re(e^{it\De}\Psi_{\infty,j})-w\cdot w_\infty \Im(e^{it\De}\Psi_{\infty,j})\big) v.
\end{align}
Here the above two terms in the right hand side are estimated similarly, we only bound the first term \eqref{RHS} in detail.

Indeed, by H\"older inequality and \eqref{phi-bound} we have
\begin{align} \label{cor-scatter}
    &\|\big(\d_j \phi\cdot v-v\cdot v_\infty \Re(e^{it\De}\Psi_{\infty,j})-v\cdot w_\infty \Im(e^{it\De}\Psi_{\infty,j})\big) v\|_{H^1}\\\nonumber
    &\les \|\big(\d_j \phi\cdot v-v\cdot v_\infty \Re(e^{it\De}\Psi_{\infty,j})-v\cdot w_\infty \Im(e^{it\De}\Psi_{\infty,j})\big)\|_{H^1} \|v\|_{W^{1,\infty}}.
\end{align}
Since $\|v\|_{W^{1,\infty}}\leq 1+\ep$ from \eqref{phi-bound}, it suffices to estimate the above first term. Using $|v_{\infty}|=|w_{\infty}=1$ and $v\cdot w=0$, we rewrite it as 
\begin{align*}
    \eqref{cor-scatter}&\les \|\big(\d_j \phi\cdot v-\Re(e^{it\De}\Psi_{\infty,j})\|_{H^1}+\|(v-v_{\infty})\cdot v_\infty \Re(e^{it\De}\Psi_{\infty,j})\|_{H^1}\\
    &\quad +\|v\cdot(w- w_\infty) \Im(e^{it\De}\Psi_{\infty,j})\big)\|_{H^1}. 
\end{align*}
From \eqref{scatter-re}, the first term converges to $0$
\begin{align*}
    \lim_{t\rightarrow\infty}\|\big(\d_j \phi\cdot v-\Re(e^{it\De}\Psi_{\infty,j})\|_{H^1}=0.
\end{align*}
For the second term, by \eqref{v-cong} and $\Psi_{\infty,j}\in H^1$ we have
\begin{align*}
    \lim_{t\rightarrow\infty}\|(v-v_{\infty})\cdot v_\infty \Re(e^{it\De}\Psi_{\infty,j})\|_{H^1}
    \les \lim_{t\rightarrow\infty}\|v-v_\infty\|_{W^{1,\infty}}\|\Re(e^{it\De}\Psi_{\infty,j})\|_{H^1}=0.
\end{align*}
Similarly, by $\|v\|_{W^{1,\infty}}\leq 1$, \eqref{v-cong} and $\Psi_{\infty,j}\in H^1$, we bound the third term by
\begin{align*}
    &\lim_{t\rightarrow\infty}\|v\cdot(w-w_{\infty}) \Re(e^{it\De}\Psi_{\infty,j})\|_{H^1}\\
    &\les \lim_{t\rightarrow\infty}\|v\|_{W^{1,\infty}}\|w-w_\infty\|_{W^{1,\infty}}\|\Re(e^{it\De}\Psi_{\infty,j})\|_{H^1}=0.
\end{align*}
Thus the term \eqref{RHS} converges to $0$ in the space $H^1$ as $t\rightarrow\infty$, i.e.
\begin{align*}
    \lim_{t\rightarrow\infty}\|\big(\d_j \phi\cdot v-v\cdot v_\infty \Re(e^{it\De}\Psi_{\infty,j})-v\cdot w_\infty \Im(e^{it\De}\Psi_{\infty,j})\big) v\|_{H^1}=0.
\end{align*}
For the other term there we have the same convergence. This yields the scattering \eqref{thm-scatter-re}. We complete the proof of corollary.
\end{proof}

\bigskip

Finally, from the above Proposition \ref{prop-local}, the bootstrap Proposition \ref{Main_Prop} and Lemma \ref{phi-bypsi}, we prove our main result:
\begin{proof}[Proof of Theorem \ref{Main_thm-ori}]
     
     \ 
     
     First, We use continuity method to prove global existence. Since initial data $(u_0,\phi_0)$ satisfies \eqref{MainAss_ini}, by Proposition \ref{prop-local} we assume that there exists maximal lifespan $T>1$ such that for any $0\leq t\leq T$
     \begin{equation}  \label{BtAs}
 \|(u,\phi)\|_{H^3\times H^4_Q}+ \lV\nab u\rV_{L^2([0,t]:H^3)}+\sum_{|a|=1} \Big\{\lV (Z^a u,\nab Z^a\phi)\rV_{H^1}+ \lV\nab Z^a u\rV_{L^2([0,t]:H^1)}\Big\}\les \ep_1,
	\end{equation}
	and in the Coulomb gauge
	\begin{equation*}
	    \| x\cdot \nab\Psi\|_{H^{1}}\lesssim \ep_1,
	\end{equation*}
    where $\ep_1=\ep_0^{2/3}$, $\Psi=e^{-it\De}\psi$ and $\psi=\d \phi\cdot v+i\d \phi\cdot w$ as in section \ref{sec2}.
    Then by Lemma \ref{psi-by_phi-Lemma}, initial data \eqref{MainAss_ini} and bootstrap assumption \eqref{BtAs}, we obtain that $(u,\psi)$ is a solution of system \eqref{sys-2} with initial data $(u_0,\psi_0)$ satisfying
    \begin{align*}
        \|(u_0,\psi_0)\|_{H^3}+\sum_{|a|=1}\| (\Lambda^a u_0,\Lambda^a\psi_0)\|_{H^{1}}\lesssim \ep_0,
    \end{align*}
    and also satisfies the bound 
    \begin{align*}
        \sup_{t\in[0,T]}\Big\{&\|(u,\psi)\|_{H^3}+\|\nab u\|_{L^2([0,t]:H^3)}\\
        &+\sum_{|a|=1}\Big(\|(Z^a u,Z^a\psi)\|_{H^1}+\|\nab Z^a u\|_{L^2([0,t]:H^1)}\Big)\Big\}\les\epsilon_1.
    \end{align*}
    From Proposition \ref{Main_Prop}, we obtain that the solution $(u,\psi)$ have the improved bound
    \begin{equation*} 
        \begin{aligned}
        \sup_{t\in[0,T]}\Big\{&\|(u,\psi)\|_{H^3}+\|\nab u\|_{L^2([0,t]:H^3)}\\
        &+\sum_{|a|=1}\Big(\|(Z^a u,Z^a\psi)\|_{H^1}+\|\nab Z^a u\|_{L^2([0,t]:H^1)}\Big)\Big\}\lesssim \epsilon_0.  
    \end{aligned}
    \end{equation*}
    By mass conservation \eqref{mass-Conservation} and Lemma \ref{phi-bypsi}, the above bound gives the improved bound 
\begin{equation}  \label{energy-bound-1}
 \|(u,\phi)\|_{H^3\times H^4_Q}+ \lV\nab u\rV_{L^2([0,t]:H^3)}+\sum_{|a|=1} \Big\{\lV (Z^a u,\nab Z^a\phi)\rV_{H^1}+ \lV\nab Z^a u\rV_{L^2([0,t]:H^1)}\Big\}\les \ep_0,
\end{equation}
    Hence, from continuity method we can extend the solution $(u,\phi)$ by Proposition \ref{prop-local} and obtain the global solution. The global bound \eqref{energy-bound} is also obtained from \eqref{energy-bound-1}.
     
     Next, by Bernstein's inequality we have 
     \begin{align*}
         \|\phi-Q\|_{L^\infty}\lesssim &\ \sum_k 2^{dk/p} \| P_k(\phi-Q)\|_{L^2}^{2/p}\| P_k(\phi-Q)\|_{L^\infty}^{1-2/p}\\
         \lesssim &\  \sum_k 2^{(\frac{d+2}{p}-1)k} \| P_k(\phi-Q)\|_{L^2}^{2/p}\| \nab P_k(\phi-Q)\|_{L^\infty}^{1-2/p}\\
         \lesssim &\  \|\phi-Q\|_{H^1}^{2/p}\| \nab\phi\|_{L^\infty}^{1-2/p},
     \end{align*}
     where we choose $p=d+2-\de$ with $\de>0$ small. Then by \eqref{phi-v-w} and \eqref{Dec_psi} we obtain
     \begin{align*}
         \|\phi-Q\|_{L^\infty}\lesssim \ep_0 \<t\>^{-\frac{3}{4}+\frac{3}{2p}}\rightarrow 0,\quad \text{as }t\rightarrow \infty.
     \end{align*}
     The decay of $u$ is obtained by \eqref{Dec_u}. 
     Thus the bound \eqref{thm-decay} follows. The asymptotic behavior \eqref{thm-scatter} is proved in Corollary \ref{thm-scatter-cor}.  
     Hence we complete the proof of Theorem \ref{Main_thm-ori}.
\end{proof}

\bigskip 
\section*{Acknowledgment}
J. Huang is supported by Beijing Institute of Technology Research Fund Program for Young Scholars. L. Zhao is supported by NSFC Grant of China No. 12271497 and the National Key Research and Development Program of China No. 2020YFA0713100.

\bigskip

\end{document}